\newcommand{\xr}{ \mathcal{X}}
\newcommand{\yr}{ \mathcal{Y}}
\newcommand{\Lr}{\mathcal{L}}
\newcommand{\psH}[2]{ {\left\langle #1, #2 \right\rangle}_{H} }
\newcommand{\pUV}{\mathfrak{p}}
\newcommand{\D}{\mathbb{D}}
\newcommand{\aug}{\text{aug}}
\newcommand{\obs}{\text{obs}}
\newcommand{\Xb}{\overline{X}}
\newcommand{\abs}[1]{\left\lvert#1\right\rvert}
\newcommand{\ind}[1]{{\mathbf 1}_{\left\{#1\right\}}}
\newcommand{\Block}{\mathbb{B}}
\newcommand{\norme}[1]{ {\left\lVert  #1\right\rVert}}
\newcommand{\normeDqp}[1]{ {\left\lVert  #1\right\rVert}_{q,p}}
\newcommand{\field}[1]{\mathbb{#1}}
\newcommand{\R}{\field{R}}
\newcommand{\PP}{\field{P}}
\newcommand{\F}{{\mathcal F}}
\newcommand{\W}{{\mathcal W}}
\newcommand{\dd}{\text{d}}
\newcommand{\ve}{\varepsilon}
\theoremstyle{plain}
\newtheorem{thm}{Theorem}
\newtheorem{cor}{Corollary}
\newtheorem{lem}{Lemma}
\newtheorem{prop}{Proposition}
\newtheorem{rem}{Remark}
\newtheorem{example}{Example}
\title{LAMN property for hidden processes:\\ the case of integrated diffusions}
\author{Arnaud GLOTER
\thanks{ Universit\'e de Marne--la--Vall\'ee, Laboratoire d'Analyse et de Math\'ematiques Appliqu\'ees UMR 8050,
5 Boulevard Descartes, 77454 Marne--la--Vall\'ee Cedex 2, FRANCE
 - email: arnaud.gloter@univ-mlv.fr - Corresponding author}  \ and
 Emmanuel GOBET \thanks{
ENSIMAG, INP Grenoble, Laboratoire Jean Kuntzmann  UMR 5224, B.P. 53, 38041 Grenoble Cedex 9, FRANCE - email: emmanuel.gobet@imag.fr}}
\begin{document}

\maketitle

\begin{abstract}
In this paper we prove the Local Asymptotic Mixed Normality (LAMN)
property for the statistical model given by the observation of local
means of a diffusion process $X$. Our data are given by $ \int_0^1
X_{\frac{s+i}{n}} \dd \mu (s)$ for $i=0,\dots,n-1$ and the unknown
parameter appears in the diffusion coefficient of the process $X$
only. Although the data are nor Markovian neither Gaussian we can
write down, with help of Malliavin calculus, an explicit expression
for the log-likelihood of the
model, and then study the asymptotic
expansion. We actually find that the asymptotic information of this
model is the same one as for a usual discrete sampling of $X$.
\vspace{0.5cm}\\
{\bf [Fran\c{c}ais]} Dans ce papier nous d\'emontrons la
propri\'et\'e LAMN pour le mod\`ele statistique constitu\'e par
l'observation des moyennes locales d'une diffusion $X$. Nos
donn\'ees sont d\'efinies comme $\int_0^1 X_{\frac{s+i}{n}} \dd \mu
(s)$ avec $i=0,\dots,n-1$ et le param\`etre inconnu appara\^it
seulement dans le coefficient de diffusion du processus $X$. Bien
que cette observation ne soit ni Gaussienne ni Markovienne nous
pouvons, par le calcul de Malliavin, obtenir une expression pour la
log-vraisemblance du mod\`ele. Nous sommes alors capables de
calculer l'information asymptotique  et montrons qu'elle est la
m\^eme que pour l'observation ponctuelle de la diffusion.\\[5mm]
{\em To appear in Annales de l'Institut Henri Poincare (B) Probability and Statistics.}
\end{abstract}

{{ K}{\footnotesize EYWORDS}:} Diffusion processes, parametric estimation,
LAMN property, Malliavin calculus, non-Markovian data
\noindent \vspace{0.2cm} \\
{AMS 2000 {{\footnotesize SUBJECT CLASSIFICATION:
} 60Fxx; 60Hxx; 62Fxx; 62Mxx}}

\section{Statement of the problem and main results}\label{S:Main}
\subsection{Introduction}

~\indent {\bf Model.} Let us consider the family of strong solutions $X^\theta$ to the following scalar equation
\begin{align}\label{E:EDS_theta}
  \dd X^\theta_t&=a(X^\theta_t,\theta) \dd B_t+b(X^\theta_t) \dd t,
  \\
\label{E:cond_init_xi} \quad X^\theta_0&=\xi_0,
\end{align}
where $B$ is a one-dimensional Brownian motion. We suppose that
$\theta$ lies in some compact interval $\Theta$ of $\mathbb{R}$ and
that $\xi_0$ is a real constant, which does not depend on $\theta$
and thus is known to the statistician.

{\bf Observations.} We consider $\mu$ some probability measure on
$[0,1]$ and assume
that our observation of the process is given by the local means of $X$
associated with this measure, with sampling of size $1/n$:
$$
\text{\bf (observations)} \qquad \Xb_j=\Xb_{j,n}:=\int_0^1 X_{\frac{s+j}{n}} \dd \mu (s), \text{ for }j=0,\dots,n-1.\qquad
$$
In the sequel this case is referred to as the integrated diffusion
case. This is an indirect observation of the process $X$ and the
observation is no more the realization of a Markov chain. Thus, this
framework is deeply related to the inference of hidden processes. We
assume that $\mu$ does not depend on $\theta$ and is known by the
statistician. When $\mu$ is equal to the Lebesgue measure, the
observation is the discrete sampling of $I_t=\int_0^t X_s ds$. This
is presumably the simplest case of the observation of only one
component of a bidimensional diffusion process $(X_t,I_t)_{0\le t\le
1}$, which is known in the literature as the standard integrated
diffusion case. Clearly, the usual case of pointwise observation of
$X$ is obtained if $\mu$ is some Dirac measure. However we will
exclude that the measure has mass only on the end points of the
interval and hence make the assumption:
\begin{equation}\label{E:hypomu}
\mu((0,1))>0.
\end{equation}
This paper is concerned with the Local Asymptotic Mixed Normality property of this statistical model.

{\bf Motivation.} Taking as the observation the integrated process
is actually quite natural. For instance, it arises when the
realization of the process has been observed after passage through
an electronic filter. Also, in random mechanics (see Kr\'ee and
Soize \cite{kree:soiz:86}), $X$ models the velocity of the system
and in general, we observe its position, i.e. the integral of $X$.
The modeling of ice-core data can be made through an integrated
diffusion process (see Ditlevsen, Ditlevsen and Andersen
\cite{ditl:ditl:ande:02}). Integrated processes also play an
important role in finance, when modeling the stochastic volatility
(see for instance Barndorff-Nielsen and Shephard \cite{barn:shep:01}
and references therein).

{\bf Literature background}. Despite of these numerous motivations,
few statistical studies deal with this situation.
Gloter \cite{glo1} \cite{glo2} provides an estimator in the
multiplicative case $a(x,\theta)=\theta a(x)$ and proves its
consistency and asymptotic normality.
The case of a low frequency observation
(local means over interval of length 1) is studied by Ditlevsen and
S\o rensen \cite{ditl:sore:04}, using prediction-based estimating
functions. On the other hand, for a direct observation of the
diffusion $X$, there are many contributions in the literature: see
Genon-Catalot and Jacod \cite{VGC}, Prakasa-Rao \cite{prak:99} and
references therein. None of these works deal with the problem of
optimal estimation in the integrated diffusion model.

Here, we directly address the problem of the LAMN property, whose fundamental
 consequence is to provide
 information on the minimal dispersion for an estimator of the parameter
 $\theta$ (see Ibragimov and Has'minskii \cite{Ibragimov}, Jeganathan \cite{Jeganathan1}
 \cite{Jeganathan2}, Prakasa-Rao \cite{prak:99}, Le Cam and Lo Yang \cite{leca:loya:00}).
 Such properties, for the observation of a discrete sampling of the diffusion,
 have been established in the one-dimensional
 setting by Dohnal \cite{dohn:87}, and then extended by Gobet \cite{Gobet1}
 \cite{Gobet2} to the multidimensional setting, both in the high frequency and
 ergodic framework. For this, Malliavin calculus techniques were used and paved
 the way to possibly handle more general situations than Markovian observations.
 This is exactly this way we follow in this work, to tackle the case of
 integrated diffusion.

{\bf Outlook.} We guess that this model captures the main
difficulty of most hidden models: the lack of Markov property for
the observation. Hence the method developed below
(augmented observation, Malliavin calculus representation, Gaussian approximation) may be useful to treat more general
situations. Among the natural situations coming from applications, one can think of the measurement of a
stochastic phenomenon blurred by some noise, or stochastic volatility models widely used in finance \cite{VCJL98}.
This can be formalized as follows: the system ${\cal X}^\theta$ is governed by the $d+d'$-dimensional
stochastic differential equation
\begin{equation*}
  {\cal X}^\theta_t={\cal X}^\theta_0+\int_0^t  {\cal A}({\cal X}^\theta_s,\theta) \dd B_s+ \int_0^t {\cal B}({\cal X}^\theta_s) \dd s,
\end{equation*}
where only a discrete sampling of the first $d$ components is observable. This is left to further research.
\subsection{Main results}
Before going into the details of our results, we present a very
simple example which gives some insight
on the type of results that one can
expect.
\begin{example}[Multiplicative Brownian case]\label{ex:multiplicative}
  Assume that the model is \[X^\theta_t=\theta B_t\] (corresponding to
  $b\equiv 0$ and $a(\cdot,\theta)=\theta, \xi_0=0$).
  \begin{enumerate}
  \item Consider a first situation where one observes the diffusion at
    discrete times. Hence, the observation is $(X_{i/n})_{0\leq i\leq
      n}$, or equivalently $(Z_i=\theta (B_{i/n}-B_{{(i-1)/n}})=\theta
    G_i)_{1\leq i\leq n}$, where $G_i$ are independent centered
    Gaussian variables, with a known variance. Thus, the estimation of
    $\theta^2$ is achieved at rate $\sqrt n$, with a minimal variance
    equal to $2 \theta^4$.
  \item Now consider a second situation where one observes only the
    integrated diffusion at discrete times. Hence, the observation is
    $(\bar X_{i}=\theta \int_{0}^{1} B_{\frac {(s+i)} n}
    \mu(ds)=\theta G'_i)_{1\leq i\leq n}$, where $(G'_i)_i$ is a
    centered Gaussian vector, with a known covariance matrix. In
    addition, this matrix is invertible and thus, $\theta^2$ can be
    estimated with the same rate and asymptotic variance as before.
  \end{enumerate}
  This means that observing the process at discrete times or its
  integrated version lead to the same accuracy in the parameter estimation. The
  results of this paper state that this is true, even for the more
  general models \eqref{E:EDS_theta}-\eqref{E:cond_init_xi}, which is
  far from intuitive.
\end{example}
Before stating our main results, we define the working assumptions of this paper. The coefficients
 $a \: : \R \times \Theta \to \R$ and $ b \: : \R \to\R$, are assumed to satisfy the following set of conditions (as usual, derivatives w.r.t. $\theta$ are denoted with a dot: for instance, $\partial_\theta a=\dot a$).\\
{\bf Assumption (R)}
\begin{enumerate}
\item [1)] The function $a \: : \R \times \Theta \to \R$ is $\mathcal{C}^{1+\gamma}$ for some
  $\gamma \in (0,1)$ (it admits a derivative which is $\gamma$-H{\"{o}}lder). The one dimensional functions
   $x \mapsto a(x,\theta)$, $x
  \mapsto \dot a(x,\theta)$, $x \mapsto b(x)$ are assumed
  to be $\mathcal{C}^3(\mathbb{R})$.
\item [2)] The functions $a$, $\dot a$ and $b$ and all
  their derivatives with respect to $x$ are bounded uniformly in
  $\theta$.
\item [3)] We have the non degeneracy condition, for some
  $\underline{a}$: $a(x,\theta)>\underline{a}>0$ for all $x,\theta$.
\end{enumerate}
Actually, the uniform controls in {\bf (R)} can be weakened to local
ones, using extra techniques of space localization (see Lemma 4.1 in
\cite{Gobet1}). We omit further details. An extension of our results
to a multidimensional parameter $\theta$ and to time dependent
coefficients is straightforward, in the same way as it is done in
\cite{Gobet1} and \cite{Gobet2}.

We denote by $\mathbb{P}^\theta$ the law on $\mathcal{C}([0,1])$ of
the process $X^\theta$, and then simply denote $X$ the canonical
process on $\mathcal{C}([0,1])$. We let $\bf{p}^{n,\theta}$ denote the law on $\mathbb{R}^{n}$ of the
observation $\mathcal{O}^n:=(\Xb_j)_{j=0,\dots,n-1}$, when the true
value of the parameter is $\theta$. And for $\theta_0$, $\theta_1$ two
values of the parameter we introduce the likelihood ratio,
\begin{equation}
Z^n_{\theta_0,\theta_1}=\frac{\dd \bf{p}^{n,\theta_1}}{ \dd \bf{p}^{n,\theta_0}}(\mathcal{O}^n).
\end{equation}

The main result is that this statistical model satisfies the so called
LAMN property. For this denote the sequence $u_n:=n^{-1/2}$, and let
$\theta_0 \in \Theta$ and $h \in \mathbb{R}$ such that $\theta_0+u_nh
\in \Theta$, $\forall n$.  Then, by the following theorem, the model
has the LAMN property for the likelihood at point $\theta_0$, with
rate $u_n$ and conditional information:
$$
\mathcal{I}_{\theta_0}=2\int_0^1
\left(\frac{\dot{a}}{a}\right)^2(X_s,\theta_0) \dd s.
$$
\begin{thm}  Assume \ref{regulariteR},
\label{T:LAMN_observation}
then we have the expansion,
$$
\log Z^n_{\theta_0,\theta_0+u_nh}=h N_n-1/2 h^2 I_n+R_n,
$$
where $ I_n \xrightarrow[n \to \infty]{\mathbb{P}^{\theta_0}}
\mathcal{I}_{\theta_0} $, $R_n \xrightarrow[n \to
\infty]{\mathbb{P}^{\theta_0}} 0$ and there exists an extra random
variable $N\sim \mathcal{N}(0,1)$ independent of the process $X$
such that, $N_n$ converges in law under
$\mathbb{P}^{\theta_0}$ to $N \sqrt{\mathcal{I}_{\theta_0}}$.

Moreover this convergence is stable: for any random variable $F$
measurable with respect to the $X$, we have the convergence in law
$(F,N_n) \xrightarrow[n \to \infty]{{law}} (F,N
\sqrt{\mathcal{I}_{\theta_0}})$. In particular it implies the joint
convergence under $\mathbb{P}^{ \theta_0}$:
$$
(I_n,N_n) \xrightarrow[n \to \infty]{{law}}
(\mathcal{I}_{\theta_0},N \sqrt{\mathcal{I}_{\theta_0}}).
$$
\end{thm}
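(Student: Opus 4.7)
The key difficulty is that the observation $\mathcal{O}^n=(\Xb_j)_j$ is not Markovian, so the log-likelihood does not split into a sum of conditional log-densities of consecutive observations. The plan, following the Malliavin-calculus strategy of \cite{Gobet1,Gobet2} and extending it to the hidden setting, is to \emph{augment} the observation with the endpoint values $X_{j/n}$ and exploit the resulting Markov factorisation at the level of Malliavin-calculus density representations. By the Markov property of $X$, the joint density of $(X_{j/n},\Xb_j,X_{(j+1)/n})_{0\le j\le n-1}$ under $\PP^\theta$ factorises as a product $\prod_{j=0}^{n-1} q_j^\theta(X_{j/n},X_{(j+1)/n},\Xb_j)$ of elementary densities on the short intervals $[j/n,(j+1)/n]$. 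The non-augmented likelihood is then recovered by integrating out the unobserved endpoints, and its score---the quantity ultimately entering the LAMN expansion---will be written as an appropriate conditional expectation of the augmented score given $\mathcal{O}^n$.

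For each elementary factor $q_j^\theta$, I would use Malliavin integration by parts to produce a tractable representation. Writing $q_j^\theta(y,z,u)=\E\bigcro{\delta_u(\Xb_j)\mid X_{j/n}=y,\,X_{(j+1)/n}=z}$ and integrating by parts on the Wiener space of $B$ removes the Dirac mass and yields $q_j^\theta(y,z,u)=\E[H_j^\theta(y,z,u)]$ for a smooth Malliavin weight $H_j^\theta$ built as a Skorohod integral involving the Malliavin derivatives of $\Xb_j$ and $X_{(j+1)/n}$. The non-degeneracy condition 3) in (R) guarantees that the underlying Malliavin covariances are invertible with uniform moment bounds, so that these representations may be differentiated in $\theta$ to yield an explicit formula for the elementary score $\partial_\theta\log q_j^\theta$ amenable to short-time analysis.

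Having those formulas in hand, I would perform a short-time Gaussian expansion on each interval $[j/n,(j+1)/n]$: replacing $X_{(j+s)/n}$ by its frozen Euler approximation $X_{j/n}+a(X_{j/n},\theta)\bigpar{B_{(j+s)/n}-B_{j/n}}$ makes $\Xb_j$ a Gaussian variable of variance $O(1/n)$. Plugging this skeleton into the Malliavin representation of $\log Z^n_{\theta_0,\theta_0+u_n h}$ and Taylor expanding in $h$ produces the announced quadratic form, with each interval contributing a score increment $u_n h\,(\dot a/a)(X_{j/n},\theta_0)\,\xi_j^{(n)}$, where $\xi_j^{(n)}$ is a centred functional of the Brownian increment on the interval, of variance $2+o(1)$ and with uniformly bounded higher moments. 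Summation produces a discrete-time $\F_{j/n}$-martingale $N_n$ whose bracket
\[
I_n\approx\frac{1}{n}\sum_{j=0}^{n-1} 2\Bigpar{\frac{\dot a}{a}}^{\!2}(X_{j/n},\theta_0)
\]
is a Riemann sum converging to $\mathcal{I}_{\theta_0}=2\int_0^1(\dot a/a)^2(X_s,\theta_0)\,\dd s$. A stable martingale central limit theorem (in the Jacod sense) then delivers $N_n\Rightarrow N\sqrt{\mathcal{I}_{\theta_0}}$ with $N\sim\mathcal{N}(0,1)$ independent of $\sigma(X)$, and the joint convergence $(F,N_n)\Rightarrow (F,N\sqrt{\mathcal{I}_{\theta_0}})$ for every $F\in\sigma(X)$ follows automatically from stability.

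The main obstacle will be the uniform control, over $0\le j\le n-1$, of the errors between each true factor $q_j^\theta$ (and its $\theta$-derivatives) and its Gaussian skeleton: each elementary error must be $o(1/n)$ in a suitable Malliavin--Sobolev norm so that the cumulative remainder $R_n$ vanishes in $\PP^{\theta_0}$-probability after summation over the $n$ intervals. This is where the full Malliavin toolbox under hypothesis (R) becomes indispensable---$L^p$ bounds on Malliavin derivatives of $X$ and $\Xb_j$, integrability of inverse Malliavin covariances, and smoothness of the integration-by-parts weights---and where the present non-Markovian set-up genuinely departs from the direct-observation analysis of \cite{Gobet1,Gobet2}, since the Dirac mass now sits on the averaged functional $\Xb_j$ rather than on the endpoint $X_{(j+1)/n}$.
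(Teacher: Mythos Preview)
Your proposal assembles the right Malliavin-calculus ingredients but contains a structural gap rooted in the choice of augmentation. You augment $\mathcal{O}^n$ with \emph{all} endpoints $X_{j/n}$, i.e.\ you work with blocks of size $k=1$. The paper shows (Theorem~\ref{T:LAMN_k_fixe}) that for any fixed block size $k$ the augmented model is LAMN with conditional information $\mathcal{I}_{k,\theta_0}=2\tfrac{k+1}{k}\int_0^1(\dot a/a)^2(X_s,\theta_0)\,\dd s$; for $k=1$ this equals $2\mathcal{I}_{\theta_0}$, not $\mathcal{I}_{\theta_0}$. Concretely, after the Gaussian approximation the elementary score on each interval is $(\dot a/a)$ times a recentred $\chi^2(k+1)=\chi^2(2)$, whose variance is $4$, not the $2+o(1)$ you assert for $\xi_j^{(n)}$. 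Hence the martingale you build converges to $N\sqrt{2\mathcal{I}_{\theta_0}}$. This is not a computational slip: the added observations $X_{j/n}$ carry as much asymptotic information as the local means, and with $k=1$ they are not negligible. Your sentence ``the score will be written as an appropriate conditional expectation of the augmented score given $\mathcal{O}^n$'' does not repair this: that conditioning is on the whole of $\mathcal{O}^n$, destroys the martingale-increment structure you rely on, and there is no mechanism in your outline showing why it halves the asymptotic variance.

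The paper's route differs in two essential places. First, it uses \emph{sparse} augmentation: one adds $X_{kl/n}$ only for $l=0,\dots,L_n$ with $k=k_n\to\infty$ slowly, so that $(k_n+1)/k_n\to1$ and the augmented information coincides with $\mathcal{I}_{\theta_0}$ (Theorem~\ref{T:LAMN_sur_observation}). Second, the passage from the augmented to the initial model is not a one-line conditional expectation but a two-step argument (Section~\ref{S:LAMN_property}): (i) in the explicit Gaussian approximation of $\log Z^{n,\aug}$ one drops the boundary variables $U_{0,l},U_{k,l}$ of each block and replaces $X_{kl/n}$ by the observed $\Xb_{kl-1}$, obtaining a statistic $\Gamma_n$ measurable in $\mathcal{O}^n$ with $\log Z^{n,\aug}-\Gamma_n\to0$ in probability; this is feasible precisely because there are only $L_n=n/k_n=o(n)$ boundary terms; (ii) one then writes $Z^n=E_{\theta_0}[Z^{n,\aug}\mid\mathcal{O}^n]=e^{\Gamma_n}E_{\theta_0}[e^{\varepsilon_n}\mid\mathcal{O}^n]$ with $\varepsilon_n\to0$ and uses the contiguity of $\PP^{\theta_0}$ and $\PP^{\theta_0+u_nh}$ (a consequence of LAMN for the augmented model) to conclude $Z^n-e^{\Gamma_n}\to0$. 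Both the growing block size and this contiguity step are missing from your plan.
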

\begin{rem}
  Let us stress that the rate $u_n=n^{-1/2}$ and the information
  $\mathcal{I}_{\theta_0}$ are the same one as for the pointwise
  observation (see Genon--Catalot and Jacod \cite{VGC}). This
  corroborates the intuition from Example \ref{ex:multiplicative}.
\end{rem}

We will not be able to prove directly this result, instead we shall
consider first the easier problem where one can observe additionally
the exact value of the diffusion at some instants.
This
device was proved to be useful in Gloter and Jacod \cite{GJ1} for
the study of a Gaussian diffusion process observed with noise that
leads to non Markovian observations too.

Let $k=k_n$ be an integer in $\{1,\dots,n\}$ and define $L=L_n:=
\left\lfloor n/k \right\rfloor$, then we consider the set of random
variables:
$$
\mathcal{O}^{n,\text{aug}}=\mathcal{O}^{n} \cup \left\{
  X_{\frac{kl}{n}}, l=1,\dots,L \right\} \cup \left\{ X_1 \right\}.
$$
Since this set of variables contains more data than the initial set,
we call it the {\em augmented observation} set. Clearly, we can
split this set into blocks,
$\Block_0,\dots,\Block_L$, where for $l=0,\dots,L-1$
$$\Block_{l}=\left\{ \Xb_{kl},\dots,\Xb_{kl+k-1},X_{k(l+1)/n}
\right\}$$
and $\Block_{L}=\left\{ \Xb_{kL},\dots,\Xb_{n-1},X_{1}
\right\}$.  Note that if $kL=n$ we consider that the last block is
empty, and (immediate) modifications should take care of this in the
sequel, however to have shorter notations we will not explicitly write
these modifications.

The advantage of this set of augmented observation is that using the
Markov property of $X$, the law the block $\Block_l$ conditional to
the previous blocks $(\Block_{l'})_{l' <l}$ only depends on the last
variable, $X_{\frac{kl}{n}}$, of the block $\Block_{l-1}$.

Denote by $\bf{p}^{n,\aug,\theta}$ the law of $\mathcal{O}^{n,\aug}$
on $\R^{n+L+1}$ and introduce the likelihood ratio for the augmented
observation:
\begin{equation}
Z^{n,\aug}_{\theta_0,\theta_1}=\frac{\dd \bf{p}^{n,\aug,\theta_1}}
{ \dd \bf{p}^{n,\aug,\theta_0}}(\mathcal{O}^{n,\aug}).
\end{equation}
\begin{thm}
\label{T:LAMN_sur_observation}
There exists a sequence $k_n \to \infty$, such that the {\em augmented
  model} satisfies the LAMN property:
$$
\log Z^{n,\aug}_{\theta_0,\theta_0+u_nh}=h N^\aug_n-1/2 h^2
I^\aug_n+R^\aug_n,
$$
where $ I^\aug_n \xrightarrow[n \to \infty]{\mathbb{P}^{\theta_0}}
\mathcal{I}_{\theta_0} $, $R^\aug_n \xrightarrow[n \to
\infty]{\mathbb{P}^{\theta_0}} 0$ and there exists an extra random
variable $N\sim \mathcal{N}(0,1)$ independent of the process $X$
such that, $N^\aug_n$ converges in law under $\mathbb{P}^{\theta_0}$
to $N \sqrt{\mathcal{I}_{\theta_0}}$. Moreover this convergence is
stable.
\end{thm}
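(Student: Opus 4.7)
The plan is to exploit the block structure. By the Markov property of $X$, conditional on $X_{kl/n}$ the block $\Block_l$ is independent of $(\Block_{l'})_{l'<l}$, so the log-likelihood ratio factorises into a sum
\[
\log Z^{n,\aug}_{\theta_0,\theta_0+u_nh} = \sum_{l=0}^L \log \frac{p_l^{\theta_0+u_nh}(\Block_l \mid X_{kl/n})}{p_l^{\theta_0}(\Block_l \mid X_{kl/n})}.
\]
A second-order Taylor expansion in $h$ of each summand isolates a score term, a Fisher-information term, and a remainder, reducing the problem to a block-wise analysis.

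Within one block of time length $k_n/n$, the key step is a frozen-coefficient Gaussian approximation: on the interval $[kl/n,k(l+1)/n]$ I would approximate $X$ by $\tilde X_t := x + a(x,\theta)(B_t-B_{kl/n}) + b(x)(t-kl/n)$ with $x=X_{kl/n}$. Under this auxiliary dynamics $\Block_l$ is an explicit linear functional of the Brownian increment, hence Gaussian with covariance $a^2(x,\theta)\Gamma_k/n$, where $\Gamma_k\in\R^{(k+1)\times(k+1)}$ is deterministic and depends only on $k$ and $\mu$; assumption \eqref{E:hypomu} is exactly what ensures that $\Gamma_k$ is invertible. A direct computation then shows that the Gaussian score equals $\frac{\dot a}{a}(x,\theta_0)(Z_l-(k+1))$, with $Z_l$ the associated quadratic form, conditionally chi-square-like with mean $k+1$ and variance $2(k+1)$. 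To deal with the true, non-Gaussian density I would use the Malliavin-calculus representation of Gobet \cite{Gobet1}: the true score $\partial_\theta\log p_l^{\theta_0}(\Block_l\mid x)$ admits an integration-by-parts formula in terms of the Malliavin covariance matrix of $\Block_l$, which can be compared to the Gaussian score and shown to differ by a term of smaller order.

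Summing over the $L_n\sim n/k_n$ blocks, the quadratic term produces, via a Riemann-sum argument,
\[
I_n^\aug \xrightarrow[n\to\infty]{\mathbb{P}^{\theta_0}} 2\int_0^1 \frac{\dot a^2}{a^2}(X_s,\theta_0)\,ds = \mathcal{I}_{\theta_0}.
\]
A central limit theorem for triangular arrays of martingale increments, with respect to the filtration generated by successive blocks, yields the $\mathbb{P}^{\theta_0}$-stable convergence of $N_n^\aug$ to $\sqrt{\mathcal{I}_{\theta_0}}\,N$ with $N\sim\mathcal N(0,1)$ independent of $X$; stability is automatic from the fact that the conditional variance is an explicit functional of $X$ alone.

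The main obstacle is the coupled choice of $k_n$ together with the control of the remainder $R_n^\aug$. One needs $k_n\to\infty$ so that the per-block score concentrates enough for the Gaussian approximation to be accurate relative to the $1/n$ scale, yet $k_n/n\to 0$ so that the frozen-coefficient error remains negligible once summed across the $L_n$ blocks. Making this trade-off work relies on $L^p$ moment estimates, uniform in $l$ and $n$, on the Malliavin derivatives of $\Block_l$, on the inverse of its Malliavin covariance matrix, and on $\partial_\theta^j\log p_l$ for $j=1,2,3$; these are precisely the technical ingredients needed to bound the Taylor remainder and the density-replacement error simultaneously, and they are the heart of the proof.
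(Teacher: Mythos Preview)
Your outline is essentially the paper's proof: block factorisation via the Markov property, frozen-coefficient Gaussian approximation on each block giving the explicit score $\frac{\dot a}{a}(x,\theta_0)(Z_l-(k+1))$, a Malliavin integration-by-parts representation of the true score to control the density-replacement error, Riemann sums for $I_n^\aug$, and a martingale CLT for triangular arrays (Jacod) for the stable convergence of $N_n^\aug$. The technical backbone you identify---uniform $L^p$ bounds on the Malliavin derivatives and on the inverse Malliavin covariance of a block---is exactly what the paper develops.

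One point is inverted, though, and it matters for understanding the role of $k_n$. You say $k_n\to\infty$ is needed so that the Gaussian approximation becomes accurate relative to the $1/n$ scale. In fact the frozen-coefficient approximation \emph{deteriorates} with $k$: all the comparison constants are of the form $c(k,p)$ and blow up with $k$, which is precisely why $k_n$ must go to infinity \emph{slowly}. The real reason $k_n\to\infty$ is forced is information accounting: for fixed $k$ the augmented model contains $L_n\sim n/k$ extra exact observations $X_{kl/n}$, the per-block conditional variance of the Gaussian score is $2(k+1)(\dot a/a)^2$, and the sum yields $\tfrac{k+1}{k}\,\mathcal I_{\theta_0}$ rather than $\mathcal I_{\theta_0}$ (this is Theorem~\ref{T:LAMN_k_fixe}). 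Sending $k_n\to\infty$ kills the factor $(k_n+1)/k_n$, while choosing it slow enough keeps every $c(k_n)$-weighted remainder negligible. A smaller remark: stability of the limit of $N_n^\aug$ is not automatic from the conditional variance being $X$-measurable; one must also verify the asymptotic orthogonality condition $u_n\sum_l E_{\theta_0}\bigl[\tilde\xi_{l,n}(\theta_0)\,(B_{k(l+1)/n}-B_{kl/n})\mid\F_l^n\bigr]\to 0$, which in this model vanishes identically by parity of the Gaussian score.
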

>From Theorem \ref{T:LAMN_sur_observation} and from the consequences
of the LAMN property, an {\em asymptotically optimal} estimator
$\theta_n$ in the augmented model should satisfy that
$\sqrt{n}(\theta_n-\theta_0)$ is asymptotically distributed under
$\mathbb{P}_{\theta_0}$ as $\frac{1}{\sqrt{\mathcal{I}_{\theta_0}}}
N$.  However any estimator in the initial model of observation
$\mathcal{O}^n$ can be seen as an estimator in the augmented model,
hence the Theorem \ref{T:LAMN_sur_observation} is sufficient by
itself to imply a lower bound for estimation in the initial model.
\begin{rem} The fact that $k_n \to \infty$ means that the data added in the observation
  are sparse compared to the initial data. Actually, the Theorem
  \ref{T:LAMN_sur_observation} holds for any sequence $k_n$ whose
  growth to $\infty$ is slow enough.
\end{rem}
If we assume now that $k_n=k \in \mathbb{N}$ remains fixed as $n \to
\infty$, the number of data $(X_{\frac{kl}{n}})_{l=0,\dots,L}$ added
to the model
is not negligible compared to the number of initial data. Hence the
statistical properties of the augmented model shall depend on $k$
and thus differ from the statistical properties of the initial model
given in Theorem \ref{T:LAMN_observation}. Actually we have the
following LAMN property for the augmented model in that case.
\begin{thm}\label{T:LAMN_k_fixe}
  If a sequence $k_n=k$ is fixed, then the {\em augmented model}
  satisfies the LAMN property with rate $u_n=n^{-1/2}$ and conditional information equal to:
  $$
  \mathcal{I}_{k,\theta_0}=2\left(\frac{k+1}{k}\right) \int_0^1
  \left(\frac{\dot{a}}{a}\right)^2(X_s,\theta_0) \dd s.
  $$
\end{thm}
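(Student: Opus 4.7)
The plan is to follow the same general block-by-block strategy as for Theorem \ref{T:LAMN_sur_observation}, but to exploit the simpler structure that arises when the block size $k$ is held fixed. By the Markov property of $X$, conditional on the endpoint sequence $(X_{kl/n})_{l\ge 0}$ the blocks $\Block_0,\dots,\Block_L$ are mutually independent, so the augmented log-likelihood ratio factorizes and one may write
\[
\log Z^{n,\aug}_{\theta_0,\theta_0+u_n h}= \sum_{l=0}^{L-1}\log \frac{p_l^{n,\theta_0+u_n h}(\Block_l\mid X_{kl/n})}{p_l^{n,\theta_0}(\Block_l\mid X_{kl/n})} + (\text{last block}),
\]
where $p_l^{n,\theta}$ denotes the conditional density of $\Block_l$ given $X_{kl/n}$. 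The whole analysis reduces to handling one such conditional term uniformly in $l$.

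On the small interval $[kl/n,k(l+1)/n]$ of length $k/n$, I would freeze the diffusion coefficient and approximate $X_t\approx X_{kl/n}+b(X_{kl/n})(t-kl/n)+a(X_{kl/n},\theta)(B_t-B_{kl/n})$. Under this Gaussian proxy, the $k$ local means $\Xb_{kl},\dots,\Xb_{kl+k-1}$ together with the endpoint $X_{k(l+1)/n}$ form a centered (up to a drift) Gaussian vector of dimension $k+1$, with covariance of the form $a^2(X_{kl/n},\theta)\cdot(k/n)\cdot\Sigma_k$, where $\Sigma_k$ is a fixed invertible $(k+1)\times(k+1)$ matrix that depends only on $k$ and $\mu$. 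Invertibility of $\Sigma_k$ follows from the hypothesis \eqref{E:hypomu} (the construction is essentially that of Example \ref{ex:multiplicative}). The Malliavin representation of the transition density used in the proofs of Theorems \ref{T:LAMN_observation} and \ref{T:LAMN_sur_observation} should then control the error of this freezing, uniformly in $l$.

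For the frozen Gaussian model, score and Fisher information are immediate. A $(k+1)$-dimensional centered Gaussian with scale $\sigma=a(X_{kl/n},\theta)\sqrt{k/n}$ has Fisher information $2(k+1)(\dot a/a)^2(X_{kl/n},\theta_0)$ with respect to $\theta$ (the factor $\sqrt{k/n}$ cancels in $\dot\sigma/\sigma$). Rescaling by $u_n^2=1/n$ to account for the local parametrization and summing over $L=\lfloor n/k\rfloor$ blocks yields the Riemann sum of step $k/n$
\[
\sum_{l=0}^{L-1}\frac{2(k+1)}{n}\Bigl(\frac{\dot a}{a}\Bigr)^{\!2}(X_{kl/n},\theta_0)\xrightarrow[n\to\infty]{\PP^{\theta_0}}\frac{2(k+1)}{k}\int_0^1\Bigl(\frac{\dot a}{a}\Bigr)^{\!2}(X_s,\theta_0)\,\dd s=\mathcal{I}_{k,\theta_0},
\]
which explains the appearance of the factor $(k+1)/k$: there are $k+1$ Gaussian observations per block but only $n/k$ blocks. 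The stable central limit theorem for the sum of block scores follows from a triangular-array martingale CLT with respect to the filtration generated by the successive blocks, yielding the mixing normal variable $N\sqrt{\mathcal{I}_{k,\theta_0}}$ independent of $X$, exactly as in Theorem \ref{T:LAMN_sur_observation}.

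The main obstacle is the estimate controlling the freezing approximation: one must show that the discrepancy between the true conditional density $p_l^{n,\theta}$ and its frozen Gaussian surrogate, after differentiation in $\theta$ and summation over $l$, is negligible in $\PP^{\theta_0}$-probability, both in the score and in the observed information. Unlike the regime $k_n\to\infty$, no intra-block CLT is needed here, since each block already contributes a finite-dimensional Gaussian score; in that sense the argument is technically more transparent than for Theorem \ref{T:LAMN_sur_observation}, provided the freezing estimates (of the same nature as those used there) are established uniformly in $l$.
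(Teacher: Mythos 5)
Your proposal is correct and follows essentially the same route as the paper: the paper proves Theorem \ref{T:LAMN_k_fixe} by simply rerunning the block-by-block argument of Theorem \ref{T:LAMN_sur_observation} (Markov factorization over blocks, frozen-coefficient Gaussian approximation of the per-block score, martingale triangular-array limit theorems), noting that with $k$ fixed the limits in \eqref{E:termePalNnsur} and \eqref{E:termePalInsur} retain the factor $(k+1)/k$ instead of tending to $1$. Your identification of the per-block conditional information $2(k+1)(\dot a/a)^2(X_{kl/n},\theta_0)$ and of the Riemann sum over $\lfloor n/k\rfloor$ blocks as the source of the factor $(k+1)/k$ matches the paper's computation exactly.
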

As expected, the conditional information is greater by a factor
$(k+1)/k$, due to the non negligibility of the added observations.
Actually this factor should be read as $1+\frac 1k$, meaning that an
addition of $\frac 1k \%$ of data increases the information in the
same way. Local means and values at discrete points are not
redundant (as expected from the multiplicative Brownian case, see
Example \ref{ex:multiplicative}) and moreover, they bring an equal
information. Considering $k=1$ is interesting, since we observe then
on each block $[i/n,(i+1)/n]$ both the exact value $X_{\frac{i}{n}}$
and a mean $\Xb_i$. It appears that the asymptotic information is
then twice the information given by the observation of only the
exact values (or only the means).

\subsection{Outline of the paper}
In Section \ref{S:Score} we study the score function given by the
observation of only one block of data ($\Block_0$ for instance). We
first focus on the existence of a density for a block of data; and in
the case of a block of size 2, $(n^{1/2} \int_0^{1} (X_{s/n}-X_0)
  \dd \mu(s), n^{1/2} (X_{\frac{1}{n}}-X_0) )$ we give original
lower and upper bounds of Gaussian type for the density. It is useful
for our proof of the LAMN property, but it is also interesting for
itself.

In Section \ref{Ss:An_exact_expression} we present an exact expression
for the score function of a block of data $\Block_0$ (see Theorem
\ref{T:likehood_block}).  This result is the key point in the proof of
the LAMN property, it extends a former result of Gobet \cite{Gobet1}
\cite{Gobet2} which gave the score function for the observation of
$X^\theta_{\frac{1}{n}}$.  In Section \ref{S:A_Gaussian_approximation}
we study an explicit approximation for the score function when the
sampling interval $1/n$ tends to zero and the length of the block
$k/n$ remains moderate so that one can consider the coefficients of
the diffusion $X$ almost constant on the interval $[0,k/n]$.  The key
point is the Gaussian approximation for the diffusion given in Section
\ref{S:approximation_for_diffusion}.

In Section \ref{S:Asymptotic_study} we deduce from the previous
results a proof of Theorems
\ref{T:LAMN_sur_observation}--\ref{T:LAMN_k_fixe} and Section
\ref{S:LAMN_property} deals on how to deduce Theorem
\ref{T:LAMN_observation} from Theorem \ref{T:LAMN_sur_observation}.

Finally the Appendix contains the proof of some results of Section
\ref{S:The_density} together with some useful lemmas.\\

{\bf Notations.} In our proofs, we will keep the same notation for constants which may
change from one line to another. In particular, the constants $c,
c(k), c(p), c(p,k)$ will stand for all finite, non-negative and
non-decreasing deterministic functions of an index $p$ (arising from
$\mathbf{L}^p$-norm) and of the block size $k$. These constants are
independent of $n, \theta$ and depend on the process $X^\theta$, only
through the bounds on the coefficients $a, b$ and their derivatives.

\section{Score function for a block of data} \label{S:Score}
In this section we shall study the law of the blocks of data $\Block_l$; recalling
the Markov property of the process $X$ it is sufficient to focus on $\Block_0=
\left\{ \Xb_{0},\dots,\Xb_{k-1},X_{k/n}  \right\}$ assuming that
the diffusion $X$ now starts from some value $x_0$.
In this section it is convenient to transform the short time
asymptotic $k/n \to 0$ into an almost stationarity property of the coefficients.
To this end, we introduce the rescaled process $\xr^{n,\theta}_t=n^{\frac{1}{2}}(X^\theta_\frac{t}{n}-x_0 )$ (where $X^\theta$ solves \eqref{E:EDS_theta} with $X_0^\theta=x_0$). It solves the equation
\begin{equation}
\label{E:defXronde}
\dd \xr_t^{n,\theta}=a_n(\xr_t^{n,\theta}, \theta) \dd W_t+b_n(\xr_t^{n,\theta})\dd t,
\quad \xr_0^{n,\theta}=0,
\end{equation}
where $W$ is a standard Brownian motion (arising from the rescaling of $B$),  and
\begin{equation}\label{E:expression_an_bn}
a_n(x,\theta)=a(x_0+n^{-1/2}x,\theta),\quad  b_n(x)=n^{-1/2}b(x_0+n^{-1/2}x).
\end{equation}
Since for the score we are only concerned with the law of $\xr^{n,\theta}$, we can assume that $W$ is independent of the rescaling coefficient $n$.

\subsection{The density of an integrated diffusion} \label{S:The_density}
In this section, we will present preliminary results on the density of the law of the mean of a diffusion process. However the proofs are postponed to Section \ref{S:Proof_of}. To our knowledge, the lower and upper bounds for this density are new results.

\subsubsection{Existence of the density} \label{S:Existence_of}
Our first result actually deals with the two dimensional variable
given by solely one local mean and the exact value:
\begin{align}\label{E:coupleUV}
  (U^{n,\theta},V^{n,\theta}):&= \left( \int_0^1 \xr_s^{n,\theta} \dd
    \mu(s), \xr_1^{n,\theta} \right) \\ \nonumber &\overset{law}{=}
  \left(n^{1/2} \int_0^{1} (X_{\frac{s}{n}}^\theta-x_0) \dd \mu(s),
    n^{1/2} (X^\theta_{\frac{1}{n}}-x_0) \right).
\end{align}
Notice that, by the Markov property, the preliminary study of this
bi--dimensional variable will be a key step to obtain results on the
observation vector $\mathcal{O}^n$.

\begin{thm}\label{T:densiteUV}
  Assume \ref{regulariteRprime}, then the vector
  $(U^{n,\theta},V^{n,\theta})$ admits a density
  $\pUV^n_{x_0}(.,.,\theta)$ on $\R^2$, and there exist two constants
  $c_1>c_2>0$, such that,
\begin{equation}\label{E:maj_min_densite}
c_1^{-1}e^{-c_1(u^2+v^2)} \le \pUV^n_{x_0}(u,v,\theta) \le
c_2^{-1}e^{-c_2(u^2+v^2)}.
\end{equation}
The constants $c_1$ and $c_2$ only depend on the bounds on the
coefficients $a$, $b$ and their derivatives.
\end{thm}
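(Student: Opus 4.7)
The strategy I would follow is pure Malliavin calculus applied to $(U^{n,\theta},V^{n,\theta})$ viewed as smooth functionals of the driving Brownian motion $W$. Under \textbf{(R)} the coefficients $a_n, b_n$ and all their $x$-derivatives are bounded uniformly in $n$ and $\theta$, so all Malliavin--Sobolev norms of $\xr^{n,\theta}$ are under uniform control, and no short-time refinement is needed for the pointwise bounds stated; only the size of $(u,v)$ will matter in the final estimates.

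For existence, the Malliavin derivatives take the explicit form
$$D_s V^{n,\theta} = a_n(\xr_s^{n,\theta},\theta)\, \tfrac{Y_1}{Y_s},\qquad D_s U^{n,\theta} = \tfrac{a_n(\xr_s^{n,\theta},\theta)}{Y_s}\int_{[s,1]} Y_r \dd\mu(r),\quad s\in[0,1],$$
where $Y_t = Y_t^{n,\theta}$ is the tangent process solving $\dd Y_t = Y_t(\partial_x a_n(\xr_t,\theta)\dd W_t + \partial_x b_n(\xr_t)\dd t)$ with $Y_0=1$. Its Dol\'eans--Dade representation, combined with the boundedness of $\partial_x a_n,\partial_x b_n$, yields $\sup_{t,n,\theta}\E[Y_t^p + Y_t^{-p}]<\infty$ for every $p\in\R$. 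By the Lagrange identity applied to the $2\times 2$ Gram matrix in $L^2([0,1])$,
$$\det \gamma^{n,\theta} = \tfrac{Y_1^2}{2}\int_0^1\!\!\int_0^1 \tfrac{a_n(\xr_s)^2 a_n(\xr_t)^2}{Y_s^2 Y_t^2}\Bigl(\int_{[s\wedge t,\,s\vee t)} Y_r \dd\mu(r)\Bigr)^2 \dd s\, \dd t.$$
Using $a_n\geq\underline{a}$, the $L^p$-bounds on $Y$, and the hypothesis $\mu((0,1))>0$ (pick $0<a<b<1$ with $\mu([a,b])>0$; then the inner integral is $\geq \mu([a,b])\min_{r\in[a,b]}Y_r$ on $\{s<a,\,t>b\}$), one obtains $(\det\gamma^{n,\theta})^{-1}\in L^p$ for every $p$, uniformly in $n,\theta$. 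Standard Malliavin theory then provides existence and $C^\infty$-smoothness of $\pUV^n_{x_0}$.

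For the Gaussian upper bound I would apply the Malliavin integration-by-parts formula in each of the four quadrants centred at $(u,v)$, obtaining expressions of the form
$$\pUV^n_{x_0}(u,v,\theta) = \pm\,\E\bigl[\mathbf{1}_{A_\pm(u,v)}\, H^{n,\theta}_{1,2}\bigr],$$
where $A_\pm(u,v)$ is one of the four quadrants $\{\pm(U^{n,\theta}-u)>0,\,\pm(V^{n,\theta}-v)>0\}$ and $H^{n,\theta}_{1,2}$ is a Skorokhod-type Malliavin weight whose $L^p$-norms are bounded uniformly in $n,\theta$ by Step~1. Since $\xr^{n,\theta}$ is a diffusion with bounded coefficients on $[0,1]$, Burkholder--Davis--Gundy yields subgaussian tails for $U^{n,\theta}$ and $V^{n,\theta}$, and from $\min(x,y)\leq\sqrt{xy}$ one deduces $\PP(A_\pm(u,v))\leq C e^{-c(u^2+v^2)}$; Cauchy--Schwarz then delivers the upper bound.

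The lower bound is the main obstacle and I would handle it through a Girsanov-shift argument in the spirit of Kusuoka--Stroock / Kohatsu-Higa. For each target $(u,v)\in\R^2$ I would construct a deterministic Cameron--Martin drift $h^{u,v}\in L^2([0,1])$ such that under the translation $W\mapsto W+\int_0^\cdot h^{u,v}_s \dd s$ the shifted pair $(U^{n,\theta},V^{n,\theta})$ is re-centred near $(u,v)$, with quadratic cost $\int_0^1 (h^{u,v}_s)^2 \dd s \leq C(u^2+v^2)$ uniformly in $n,\theta$. Existence and uniform control of such a drift reduces to inverting a two-dimensional linear system whose matrix is precisely the deterministic analogue of $\gamma^{n,\theta}$, non-degenerate under $\mu((0,1))>0$. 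Writing $\pUV^n_{x_0}(u,v,\theta)$ via the Girsanov density and combining with the upper bound already obtained, applied to the shifted density near the origin, yields $\pUV^n_{x_0}(u,v,\theta)\geq c_1^{-1}e^{-c_1(u^2+v^2)}$. The technical heart of the proof is precisely this controllability-with-quadratic-cost statement together with uniform control of the Girsanov exponential moments, both of which ultimately rely on the structural hypothesis $\mu((0,1))>0$.
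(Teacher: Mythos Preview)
Your existence argument via the Malliavin covariance of $(U^{n,\theta},V^{n,\theta})$ is essentially the paper's Lemma on $\gamma_{U^n,V^n}$, and your upper bound through the quadrant integration-by-parts formula combined with subgaussian tails of $\sup_{s\le 1}|\xr_s^{n,\theta}|$ is a legitimate alternative to what the paper does.

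The lower bound, however, has a genuine gap. After your Cameron--Martin shift by $h^{u,v}$ you write the original density as the shifted density at $(u,v)$ divided by a conditional Girsanov weight, and you propose to bound the shifted density below ``near the origin'' by appealing to the upper bound already obtained. But an upper bound on a density gives no pointwise lower bound at any specific location; and the shifted SDE now carries a drift $a_n(\cdot)h^{u,v}+b_n$ of size $\sim |u|+|v|$, so none of your Malliavin or tail estimates on $(\tilde U,\tilde V)$ are uniform in the target. The actual Kohatsu-Higa machinery you cite does not work by a single global shift: it needs a time discretisation, conditioning at intermediate times, and an inductive Gaussian comparison on each small subinterval. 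Your one-step sketch does not capture this and, as written, is circular.

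The paper takes a completely different and much shorter route, exploiting that the diffusion is one-dimensional. It applies the Lamperti transform $s_n(x)=\int_0^x a_n^{-1}(y)\,dy$, so that $\mathcal W^n:=s_n(\xr^n)$ solves $d\mathcal W^n=dW+\tilde b_n(\mathcal W^n)\,dt$ with $|\tilde b_n|\le c\,n^{-1/2}$, and a single Girsanov step makes $\mathcal W^n$ a Brownian motion under $\tilde P$ with Radon--Nikodym derivative two-sided bounded by $c^{\pm1}e^{\pm c|V^n|/\sqrt n}$. Under $\tilde P$ one conditions on $\mathcal W^n_1=w$, writes the Brownian bridge as $\xi\,\eta_t+\mathcal W^{**}_t$ with $\eta$ the triangle function and $\xi\sim\mathcal N(0,1)$ independent of the two-piece bridge $\mathcal W^{**}$, and performs the one-variable change $u=g_{\mathcal W^{**}}(\xi):=\int_0^1 s_n^{-1}(\xi\eta_t+\mathcal W^{**}_t+tw)\,d\mu(t)$. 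The hypothesis $\mu((0,1))>0$ enters exactly here, forcing $\int_0^1\eta_t\,d\mu(t)>0$ and hence $g_{\mathcal W^{**}}$ bi-Lipschitz; both Gaussian bounds then follow by an elementary calculation. The trade-off: the paper's argument is direct and avoids any iteration, but it is intrinsically one-dimensional; your Malliavin/shift template is the right shape for higher-dimensional analogues, but the lower-bound step needs substantially more work than you indicate.
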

The proof of this theorem is given in Section \ref{S:Proof_of}.  The
existence of the density is obtained by means of the Malliavin
calculus.  On the other hand, the upper and lower bounds rely on the
direct study of $(U^{n,\theta},V^{n,\theta})$ around its skeleton
(see Hirsch and Song \cite{Hirsch1} \cite{Hirsch2} for related
works; and Kohatsu--Higa \cite{Kohatsu} for different methods
involving Malliavin calculus).

The following is a direct corollary of Theorem \ref{T:densiteUV}:
\begin{cor}
  The vector $\Block_0=\left\{ \Xb_{0},\dots,\Xb_{k-1},X_{k/n}
  \right\}$ admits a positive density.
\end{cor}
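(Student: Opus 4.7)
The plan is to iterate Theorem \ref{T:densiteUV} along the $k$ consecutive sub-intervals $[j/n,(j+1)/n]$ using the Markov property of $X$, and then marginalise the intermediate endpoints $X_{j/n}$ for $j=1,\dots,k-1$, which do not belong to $\Block_0$.

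By Theorem \ref{T:densiteUV}, the rescaling identity \eqref{E:coupleUV}, and the Markov property at time $j/n$, the conditional law of $(\Xb_j,X_{(j+1)/n})$ given $X_{j/n}=y$ admits a density $q_j^n(\cdot,\cdot\mid y)$ on $\R^2$ that is strictly positive, and after the affine change of variables from $(U^{n,\theta},V^{n,\theta})$ to $(\Xb_j,X_{(j+1)/n})$ satisfies
\[
q_j^n(\bar x,y'\mid y)\le C\,n\,\exp\!\bigl(-c\,n\bigl((\bar x-y)^2+(y'-y)^2\bigr)\bigr),
\]
with constants $C,c>0$ independent of $y$ and $j$. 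Iterating the Markov property at the times $j/n$, the enlarged vector $(\Xb_0,X_{1/n},\Xb_1,X_{2/n},\dots,\Xb_{k-1},X_{k/n})$ admits a joint density on $\R^{2k}$ equal to the product $\prod_{j=0}^{k-1} q_j^n(\bar x_j,y_{j+1}\mid y_j)$, with the convention $y_0=x_0$.

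The density of $\Block_0$ is then obtained by Fubini as
\[
p_{\Block_0}(\bar x_0,\dots,\bar x_{k-1},y_k)=\int_{\R^{k-1}}\prod_{j=0}^{k-1} q_j^n(\bar x_j,y_{j+1}\mid y_j)\,dy_1\cdots dy_{k-1}.
\]
The Gaussian upper bound dominates the integrand by a product of Gaussians in the $y_\ell$'s and makes the integral finite; the strict positivity of each $q_j^n$ forces the integrand to be positive on $\R^{k-1}$, hence $p_{\Block_0}>0$ everywhere. There is no real obstacle here: the argument is pure bookkeeping once Theorem \ref{T:densiteUV} is in hand, and the only point that deserves a line is the finiteness of the marginal integral, which is immediate from the Gaussian upper bound in \eqref{E:maj_min_densite}.
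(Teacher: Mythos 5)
Your proposal is correct and follows essentially the same route as the paper: the paper observes that $(\Xb_l,X_{\frac{l+1}{n}})_{l}$ is a Markov chain with transition density $n\,\pUV^n_{x_l}(n^{1/2}(\overline{x}_{l+1}-x_l),n^{1/2}(x_{l+1}-x_l),\theta)$ and declares the positivity of the block density "clear", while you simply write out the marginalisation of the intermediate points $X_{j/n}$ that this conclusion implicitly requires. The explicit use of the Gaussian bounds of Theorem \ref{T:densiteUV} for positivity and finiteness is a faithful elaboration of the same argument.
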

\begin{proof}
  The bi--dimensional process
  $(\Xb_l,X_\frac{l+1}{n})_{l=0,\dots,k-1}$ is a Markov chain with
  transition density $p_{x_l}(\overline{x}_l,x_{l+1},\theta)=
  n\pUV^n_{x_l}(n^{\frac{1}{2}}
  (\overline{x}_{l+1}-x_l),n^{\frac{1}{2}}(x_{l+1}-x_l),\theta)$. Then
  it is clear that the vector $\Block_0$ admits a positive density.
\end{proof}

\subsubsection{Invertibility of the Malliavin covariance matrix of a block}\label{S:Notations_from}

Actually the existence of a density for the law of the random variable
$\Block_0$ will not be sufficient, and we need a non degeneracy
condition for this variable.

Before this, let us precise briefly a few notations from the
Malliavin calculus, used in the sequel (see Nualart \cite{Nualart1}
\cite{Nualart2} for details).  We let $H$ be the Hilbert space
$\mathbf{L}^2([0,\infty))$ so that the Brownian motion $(W_t)_{t \in
  [0,\infty)}$, appearing in \eqref{E:defXronde}, is canonically associated to
this Hilbert space via the standard $\mathbf{L}^2$ isometry.  In
this setting, for any $p \ge 1$ and natural number $q$, recall that
the set $\D^{q,p}$ denotes the space of real valued Wiener
functionals with $q$ derivatives and whose derivatives belong to
$\mathbf{L}^p(\Omega)$. If we denote by $D$ the derivative operator
then the space $\D^{q,p}$ is endowed with the norm, $ \normeDqp{F}=
\left[ E(\abs{F}^p)+\sum_{j=1}^q E(\norme{D^j
    F}_{\mathbf{L}^2([0,\infty)^j)}^p) \right]^{\frac{1}{p}}.$
The space of variable with $q$ derivatives in any
$\mathbf{L}^p(\Omega)$ is denoted $\D^{q,\infty}=\cap_{p \ge 1}
\D^{q,p}$.  These definitions can be extended to random variables with
values in any Hilbert space $V$ and the corresponding spaces are
denoted $\D^{q,p}(V)$, $\D^{q,\infty}(V)$ (see Section 1.5 in Nualart
\cite{Nualart1}).  In particular the operator $D$ is then well defined
from $\D^{q,\infty}$ to $\D^{q-1,\infty}(H)$.
Finally, the adjoint operator of $D$ is the Skohorod integral $\delta$,
and the Malliavin covariance matrix of an element $F \in
\D^{1,\infty}(\mathbb{R}^d)$ is defined as the matrix
$\gamma_{F_1,\cdots,F_d}=[\psH{D.F_i}{D.F_j}]_{1\le i,j \le d}$.

Now, we consider the variables,
\begin{align} \label{E:defU0}
  U_0^{n,\theta}&:= \int_0^1 \xr_s^{n,\theta} \dd \mu(s)  \\
  U_1^{n,\theta}&:= \int_0^1 (\xr_{s+1}^{n,\theta}-\xr_{s}^{n,\theta})
  \dd \mu(s)\label{E:defU1}\\
& \quad \quad \vdots \nonumber\\
U_{k-1}^{n,\theta}&:=\int_0^1
(\xr_{s+k-1}^{n,\theta}-\xr_{s+k-2}^{n,\theta}) \dd \mu(s)\label{E:defUkmoins}\\
\label{E:defUk} U_k^{n,\theta}&:=\int_0^1
(\xr_{k}^{n,\theta}-\xr_{s+k-1}^{n,\theta}) \dd \mu(s).
\end{align}
Note that the joint law of these $k+1$ variables is, by rescaling,
the same as the law of the vector composed with variables of the first
block $\Block_0$: $n^{\frac{1}{2}}( \Xb_0^\theta-x_0, \Xb_1^\theta-\Xb_0^\theta ,
\dots, \Xb_{k-1}^\theta-\Xb_{k-2}^\theta,
X_{\frac{k}{n}}^\theta-\Xb_{k-1}^\theta )$.
These variables satisfy the following non degeneracy property whose
proof is postponed to Section \ref{S:Proof_of_Proposition}.
\begin{prop}\label{P:inversibleMalliaB}
  Under \ref{regulariteRprime},
  $(U_0^{n,\theta},\dots,U_{k}^{n,\theta})\in
  \D^{3,\infty}$. Denote by $K(\theta)$ the
  Malliavin covariance matrix of $(U_0^{n,\theta},\dots,U_{k}^{n,\theta})$. It is $a.s.$ an invertible matrix and for all $p\geq 1$, we have
  $$
  E\left( \abs{\det(K(\theta))}^{-p} \right) \le c(p,k).
  $$
\end{prop}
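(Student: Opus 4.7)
\medskip
\noindent\textbf{Plan of proof.} I would handle the two assertions of the proposition separately. For the Malliavin--Sobolev regularity $(U_0^{n,\theta},\dots,U_k^{n,\theta})\in\D^{3,\infty}$, the plan is to invoke the standard theory of Malliavin differentiability for SDE solutions (see e.g.\ Nualart, Section~2.2). Under \ref{regulariteR}, the rescaled coefficients $a_n(\cdot,\theta)$ and $b_n(\cdot)$ defined in~\eqref{E:expression_an_bn} are $\mathcal{C}^3$ with derivatives bounded uniformly in $(n,\theta)$, hence $\xr_t^{n,\theta}\in\D^{3,\infty}$ for every $t\in[0,k]$. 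Each $U_i^{n,\theta}$ is (up to a sign) a Bochner integral of $\xr^{n,\theta}$ against the probability measure $\mu$, and so lies in $\D^{3,\infty}$ by closedness.

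For the inverse-moment bound on $\det K(\theta)$, I would first change basis and introduce the auxiliary vector $F:=(\bar\xr_0,\dots,\bar\xr_{k-1},\xr_k)$ where $\bar\xr_i:=\int_0^1\xr_{s+i}^{n,\theta}\,d\mu(s)$. The map $F\mapsto(U_0,\dots,U_k)$ is lower-bidiagonal with $\pm 1$ entries and unit determinant, so $\det K(\theta)=\det\Gamma$ with $\Gamma$ the Malliavin covariance of $F$. The scalar SDE~\eqref{E:defXronde} yields the explicit formula $D_r\xr_t^{n,\theta}=a_n(\xr_r^{n,\theta},\theta)\,\Lambda_{r,t}\mathbf{1}_{\{r\le t\}}$, where the propagator $\Lambda_{r,t}=Y_t/Y_r>0$ (ratio of the scalar first-variation process) admits $L^p$-bounds for $\Lambda^{\pm 1}$ uniform in $n,\theta,0\le r\le t\le k$ by boundedness of $\partial_xa_n,\partial_xb_n$. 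Writing $D_rF_i=a_n(\xr_r,\theta)\Phi_i(r)$ with the explicit random kernels
\[
\Phi_i(r)=\mathbf{1}_{\{r\le i+1\}}\int_{(r-i)\vee 0}^{1}\Lambda_{r,s+i}\,d\mu(s)\quad (i<k),\qquad \Phi_k(r)=\mathbf{1}_{\{r\le k\}}\Lambda_{r,k},
\]
and using the lower bound $a_n\ge\underline{a}$ from \ref{regulariteR}, one gets the PSD inequality $\Gamma\ge\underline{a}^{\,2}\tilde\Gamma$ with $\tilde\Gamma_{ij}:=\int_0^k\Phi_i\Phi_j\,dr$. It thus suffices to show $E[(\det\tilde\Gamma)^{-p}]\le c(p,k)$ uniformly in $n,\theta$.

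To lower-bound $\tilde\Gamma$, for $\xi\in\R^{k+1}$ I would decompose
\[
\xi^T\tilde\Gamma\xi=\sum_{j=0}^{k-1}\int_j^{j+1}\Bigl(\sum_{i\ge j}\xi_i\Phi_i(r)\Bigr)^{\!2}\,dr
\]
using that $\Phi_i\equiv 0$ on $[j,j+1]$ for $i<j$. The core of the argument is a block estimate on each interval $[j,j+1]$: the function $\Phi_j(r)=\int_{r-j}^{1}\Lambda_{r,s+j}\,d\mu(s)$ genuinely varies in $r$, with $L^2$-variation controlled from below by $\mu((0,1))>0$ combined with positive moments of $\Lambda^{-1}$, whereas the $\Phi_i$ with $i>j$ depend on $r$ only through the slowly-varying propagator $\Lambda_{r,\cdot}$ and so remain close to constants on $[j,j+1]$. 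A quadratic-form estimate should then produce, for each $j$, a random $C_j>0$ with $E[C_j^{-p}]\le c(p)$ uniformly in $n,\theta$, such that
\[
\int_j^{j+1}\Bigl(\sum_{i\ge j}\xi_i\Phi_i(r)\Bigr)^{\!2}dr \ge C_j\bigl[\xi_j^2+(\xi_{j+1}+\cdots+\xi_k)^2\bigr].
\]
Summing over $j=0,\dots,k-1$ controls each $\xi_i^2$ separately (the $\xi_k^2$ term being captured by the $j=k-1$ contribution), so $\xi^T\tilde\Gamma\xi\ge C(\omega)|\xi|^2$ for a random $C>0$ with $E[C^{-p}]\le c(p,k)$. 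Hence $\det\tilde\Gamma\ge C^{k+1}$, and the conclusion follows.

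The hard part will be making the block estimate quantitative \emph{uniformly in $n$}. A naive Gaussian approximation $\Lambda\to 1$ is valid only in the limit $n\to\infty$ and would not suffice, since the constant $c(p,k)$ must not depend on $n$. Instead, the argument has to exploit directly that $\Lambda^{\pm 1}$ have bounded $L^p$-moments uniformly in $n$, combined with the structural property $\mu((0,1))>0$: one picks a sub-interval $(\alpha,\beta)\subset(0,1)$ with $\mu((\alpha,\beta))>0$, restricts the $r$-integration to $[j+\alpha,j+\beta]$, and shows that the quantitative variation of $\Phi_j$ there dominates the slow variation of the $\Phi_i$, $i>j$, up to a random factor with finite negative moments.
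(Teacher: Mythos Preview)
Your strategy is sound and ends up being a close cousin of the paper's proof, but the packaging is different. The paper works with the larger $2k$-dimensional vector $\bigl((\overline{\xr}^n_j,\xr^n_{j+1})\bigr)_{j=0,\dots,k-1}$ and, using the flow relation $D_t\xr^n_s=D_t\xr^n_{j}\cdot\yr^n_s(\yr^n_j)^{-1}$ for $t<j<s$, obtains an \emph{exact} factorization $\det\Gamma^k=\prod_j\det\gamma_j$ through column operations; each $2\times2$ block $\gamma_j$ is a shifted copy of the Malliavin covariance of $(U^n,V^n)$ from Lemma~\ref{L:mino_mallia_taille2}, and Proposition~\ref{P:inversibleMalliaB} then follows by extracting a principal submatrix. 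Your route---work directly with the $(k+1)$-vector, split $\xi^T\tilde\Gamma\xi$ over $[j,j+1]$, and use the cocycle $\Lambda_{r,t}=\Lambda_{r,j+1}\Lambda_{j+1,t}$ so that $\sum_{i>j}\xi_i\Phi_i(r)=\eta_j\Lambda_{r,j+1}$---reduces to the \emph{same} $2\times2$ Gram estimate on each interval. The paper's determinant factorization is cleaner (no eigenvalue bounds needed), while your quadratic-form argument avoids the auxiliary $2k$-vector and the submatrix extraction.

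Two points to fix. First, your displayed block estimate is mis-stated: on $[j,j+1]$ one gets
\[
\int_j^{j+1}\Bigl(\sum_{i\ge j}\xi_i\Phi_i(r)\Bigr)^{2}dr
=(\xi_j,\eta_j)\,M_j\,(\xi_j,\eta_j)^T,\qquad
\eta_j=\sum_{i>j}\xi_i\,c_i^{(j)},
\]
with \emph{random} coefficients $c_i^{(j)}$, not the unweighted sum $\xi_{j+1}+\cdots+\xi_k$. This does not break your argument, since you only use the $\xi_j^2$ contribution from each block and, at $j=k-1$, the identity $c_k^{(k-1)}=\Lambda_{k,k}=1$ recovers $\xi_k^2$. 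Second, the core $2\times2$ estimate---that $\Phi_j(\cdot)$ and $\Lambda_{\cdot,j+1}$ are quantitatively independent in $L^2([j,j+1])$ with uniform negative moments---is exactly the content of Lemma~\ref{L:mino_mallia_taille2}, and your last paragraph does not prove it. The paper's device is to write $\det\gamma_{U^n,V^n}$ as a positive multiple of the \emph{variance} of $f^n(t)=\int_{[t,1]}\yr^n_s\,d\mu(s)$ under the random probability $m^n_t\,dt$, and then to lower-bound that variance by $c_*\int_0^{1/2}\mu([t,t+1/2))^2\,dt$, which is strictly positive by~\eqref{E:hypomu}. Your heuristic ``$\Phi_j$ varies, $\Phi_i$ ($i>j$) are nearly constant'' is not quite the mechanism: both functions involve the propagator, and the genuine distinction is the $r$-dependent lower limit in $\int_{r-j}^1\yr^n_{s+j}\,d\mu(s)$. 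Plugging in the paper's variance computation (shifted to $[j,j+1]$) completes your proof.
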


\subsection{An exact expression using Malliavin calculus}
\label{Ss:An_exact_expression}
In this Section we intend to give an exact expression for the score
function of the observation of $\Block_0$ or equivalently for the
vector $(U^{n,\theta}_0,\dots,U^{n,\theta}_k)$ given by
\eqref{E:defU0}--\eqref{E:defUk}.

Under the condition \ref{regulariteRprime}, we know that there exists
a version of the solution of \eqref{E:defXronde} such that $P-$almost
surely the function $\theta \to \xr^\theta_t$ is continuously
differentiable for all $t$ and $\tau^{n,\theta}_t:= \frac{\partial
  \xr^{n,\theta}_t}{\partial \theta}$ is a solution of the stochastic
equation (see Kunita \cite{Kunita}):
\begin{align}
\label{E:def_tau}
\dd \tau^{n,\theta}_t&= \frac{\partial a_n}{\partial
  x}(\xr^{n,\theta}_t,\theta)\tau^{n,\theta}_t \dd W_t +
\frac{\partial a_n}{\partial \theta}(\xr^{n,\theta}_t,\theta)\dd W_t +
\frac{\partial b_n}{\partial x}(\xr^{n,\theta}_t)\tau^{n,\theta}_t \dd
t
\\
\nonumber \quad \tau^{n,\theta}_0&=0.
\end{align}
The main result of this section is an explicit representation for the
derivative of the log-likelihood of one block.  This extends a former
result given by Gobet (see \cite{Gobet1} \cite{Gobet2}).
\begin{thm}\label{T:likehood_block} The random vector
  $(U_0^{n,\theta},\dots,U_k^{n,\theta})$ admits a positive density on
  $\mathbb{R}^{k+1}$, denoted by $p_{x_0}(u_0,\dots,u_k,\theta)$. For
  a.e. $(u_0,\dots,u_k)$, this density is an absolutely continuous
  function with respect to the parameter $\theta$ and we have the
  formula:
\begin{equation*}
\frac{\dot{p}_{x_0}}{p_{x_0}}(u_0,\dots,u_k,\theta)
=E\left[ \delta \left(
\sum_{0\le j,j' \le k}
\frac{\partial U^{n,\theta}_j}{\partial \theta} K(\theta)^{-1}_{j,j'}
D. U^{n,\theta}_{j'}
\right) \mid (U^{n,\theta}_j=u_j)_{j=0,\dots,k}
\right],
\end{equation*}
where $K(\theta)^{-1}$ is the inverse of the Malliavin covariance
matrix of $(U_0^{n,\theta},\dots,U_k^{n,\theta})$.
\end{thm}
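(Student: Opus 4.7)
The plan is to adapt to the vector-valued observation the Malliavin-calculus argument that Gobet \cite{Gobet1,Gobet2} used for the scalar case $F=X^\theta_{1/n}$. Set $F(\theta):=(U_0^{n,\theta},\dots,U_k^{n,\theta})$. The three conclusions of Proposition \ref{P:inversibleMalliaB}---namely $F(\theta)\in\mathbb{D}^{3,\infty}(\mathbb{R}^{k+1})$, $K(\theta)$ a.s. invertible, and all negative moments of $\det K(\theta)$ finite---together with the $\mathbb{D}^{2,\infty}$-regularity of $\theta\mapsto\xr^{n,\theta}$ furnished by the linear equation \eqref{E:def_tau} under Assumption (R), are precisely the hypotheses required for the non-degenerate integration-by-parts machinery. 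The positivity of $p_{x_0}$ needed to form the ratio $\dot p_{x_0}/p_{x_0}$ follows from Theorem \ref{T:densiteUV}, by induction on the block length using the Markovian structure of $(\Xb_l,X_{(l+1)/n})$, as already noted in the corollary that follows Theorem \ref{T:densiteUV}.

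\textbf{Step 1 (integration by parts).} Pick an arbitrary test function $\phi\in\mathcal{C}_c^\infty(\mathbb{R}^{k+1})$. Assuming the required integrability (dealt with in the last paragraph), one can differentiate $\mathbb{E}[\phi(F(\theta))]$ in $\theta$ under the expectation. The Malliavin chain rule gives $\psH{D(\phi\circ F)}{DU_{j'}^{n,\theta}}=\sum_i\partial_i\phi(F)K_{i,j'}(\theta)$, and inverting $K(\theta)$ (which is symmetric) one obtains
$$\partial_j\phi(F(\theta))=\sum_{j'=0}^k K^{-1}_{j,j'}(\theta)\,\psH{D(\phi\circ F)}{DU_{j'}^{n,\theta}}.$$
Multiplying by $\dot U_j^{n,\theta}$, summing in $j$, and invoking the duality between $D$ and the Skorokhod integral $\delta$ yields
$$\partial_\theta\,\mathbb{E}\bigl[\phi(F(\theta))\bigr]=\mathbb{E}\!\left[\phi(F(\theta))\,\delta\!\left(\sum_{0\le j,j'\le k}\dot U_j^{n,\theta}\,K^{-1}_{j,j'}(\theta)\,DU_{j'}^{n,\theta}\right)\right].$$

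\textbf{Step 2 (conditioning and identification).} Denote the Skorokhod integral above by $G(\theta)$. Conditioning on $F(\theta)$ turns the right-hand side into $\int\phi(y)\,\mathbb{E}[G(\theta)\mid F(\theta)=y]\,p_{x_0}(y,\theta)\,dy$, while the left-hand side reads $\partial_\theta\int\phi(y)p_{x_0}(y,\theta)dy$. As $\phi$ ranges over $\mathcal{C}_c^\infty(\mathbb{R}^{k+1})$, this implies that for almost every $y$ the map $\theta\mapsto p_{x_0}(y,\theta)$ is absolutely continuous and
$$\frac{\dot p_{x_0}}{p_{x_0}}(y,\theta)=\mathbb{E}\bigl[G(\theta)\mid F(\theta)=y\bigr],$$
which is the announced formula.

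\textbf{Main obstacle.} The real work lies in proving that the anticipating integrand $u(\theta):=\sum_{j,j'}\dot U_j^{n,\theta}\,K^{-1}_{j,j'}(\theta)\,DU_{j'}^{n,\theta}$ belongs to $\mathrm{Dom}(\delta)$ with moments in every $\mathbf{L}^p(\Omega)$, so as to legitimize successively the differentiation under the expectation, the duality manipulation, and the final interchange with $\int\phi\,dy$. This is achieved by combining Proposition \ref{P:inversibleMalliaB} (which gives $K^{-1}(\theta)$ in every $\mathbf{L}^p$), standard $\mathbf{L}^p$-estimates for $\tau^{n,\theta}$ and for the first two Malliavin derivatives of the $U_j^{n,\theta}$, obtained from \eqref{E:def_tau} and \eqref{E:defU0}--\eqref{E:defUk} under Assumption (R), and Meyer's inequality $\|\delta(u)\|_{\mathbf{L}^p(\Omega)}\le c_p\|u\|_{\mathbb{D}^{1,p}(H)}$. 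Once these moment bounds are in place, the remainder of the argument is routine bookkeeping.
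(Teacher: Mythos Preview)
Your proposal is correct and follows essentially the same route as the paper: differentiate $E[\phi(U^{n,\theta})]$ under the expectation, invert the Malliavin covariance matrix to recover $\partial_j\phi$, apply the $D/\delta$ duality, and then read off the score as a conditional expectation. The only place where the paper is slightly more careful than you is in your Step~2: from the identity $\partial_\theta\int\phi(y)p_{x_0}(y,\theta)\,dy=\int\phi(y)E[G(\theta)\mid F(\theta)=y]\,p_{x_0}(y,\theta)\,dy$ for all $\phi$, it is not quite immediate that $\theta\mapsto p_{x_0}(y,\theta)$ is absolutely continuous for a.e.\ $y$. The paper closes this gap by pairing additionally against a test function $g\in\mathcal{C}^\infty_c$ in the $\theta$-variable, integrating by parts in $\theta$, and then applying Fubini in $(y,\theta)$; this is the standard fix and you may want to add one sentence to that effect.
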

\begin{proof}
  Denote $U^{n,\theta}$ the Wiener functional,
  $U^{n,\theta}=(U_0^{n,\theta}, \dots,U_k^{n,\theta})$ and let
  $f:\mathbb{R}^{k+1} \to \mathbb{R}$ be a smooth function with
  compact support. Then the function $\theta \mapsto
  E\left[f(U^{n,\theta}) \right]$ can be differentiated pointwise and:
\begin{equation*}\label{E:derive_perm}
\frac{\partial}{\partial \theta} E\left[f(U^{n,\theta})\right]
=E\left[\sum_{j=0}^k \frac{\partial f}{\partial u_j}(U^{n,\theta})
\frac{\partial U_j^{n,\theta}}{\partial \theta}
\right].
\end{equation*}
By Proposition \ref{P:inversibleMalliaB} the Malliavin covariance
matrix of $U^{n,\theta}$ is invertible and a standard computation on
Wiener functionals (see formula (2.4) p.81 in Nualart \cite{Nualart1})
shows that: $ \frac{\partial f}{\partial u_j}(U^{n,\theta})=
\sum_{j'=0}^k \psH{D(f(U^{n,\theta}))}{DU_{j'}^{n,\theta}}
K(\theta)^{-1}_{j,j'}.  $ It follows that $\frac{\partial}{\partial \theta}  E\left[f(U^{n,\theta})\right]$ is equal to
\begin{align}
  E\left[\sum_{j=0}^k \sum_{j'=0}^k
    \psH{D(f(U^{n,\theta}))}{DU^{n,\theta}_{j'}} K(\theta)^{-1}_{j,j'}
    \frac{\partial U_j^{n,\theta}}{\partial \theta} \right]
   \nonumber &=E\left[\psH{D(f(U^{n,\theta}))}{L^\theta} \right]
\end{align}
where $L^\theta$ is the $H$-valued random variable:
\begin{equation}\label{E:defL}
L^\theta:=\sum_{j=0}^k
\sum_{j'=0}^k
\frac{\partial U_j^{n,\theta}}{\partial \theta}
K(\theta)^{-1}_{j,j'}
DU^{n,\theta}_{j'}.
\end{equation}
Introducing $\delta$ the adjoint operator of $D$, we get
\begin{align}
\label{E:derivEfU}
\frac{\partial}{\partial \theta} E\left[f(U^{n,\theta})\right]
&=E\left[ f(U^{n,\theta}) \delta( L^\theta) \right].
\end{align}
Let $g$ be any smooth function with compact support on $\mathbb{R}$.
Using the integration by part formula and the equation
\eqref{E:derivEfU} we have:
\begin{align*}
  \int \dd \theta \dot{g}(\theta) E(f(U^{n,\theta}))&=-\int \dd \theta
  g(\theta) \frac{\partial}{\partial \theta}
  E\left[f(U^{n,\theta})\right] =-\int \dd \theta g(\theta) E\left[
    f(U^{n,\theta}) \delta( L^\theta) \right]
  \\
  &=-\int \dd \theta g(\theta) E\left[ f(U^{n,\theta}) E[ \delta(
    L^\theta) \mid (U^{n,\theta}_0, \dots,U^{n,\theta}_k)] \right].
\end{align*}
Introducing the density of the random vector $U^{n,\theta}$ the
equation above writes,
\begin{multline*}
  \int \dot{g}(\theta) \dd \theta\int f(u_0,\dots,u_k)
  p_{x_0}(u_0,\dots,u_k,\theta) \dd u_0\dots\dd u_k
  \\
  =- \int g(\theta)\dd \theta \int f(u_0,\dots,u_k) E[ \delta(
  L^\theta) \mid (U^{n,\theta}_l=u_l)_l] p_{x_0}(u_0,\dots,u_k,\theta)
  \dd u_0\dots\dd u_k.
\end{multline*}
Now using Fubini's theorem it can be seen that $\dd u_0\dots\dd
u_k$-almost\break everywhere the function $\theta \to
p_{x_0}(u_0,\dots,u_k,\theta)$ is absolutely continuous with
$$\dot{p}_{x_0}(u_0,\dots,u_k,\theta)= E[ \delta( L^\theta) \mid
(U^{n,\theta}_l=u_l)_l] p_{x_0}(u_0,\dots,u_k,\theta).
$$
Hence the theorem is proved.
\end{proof}
Remark that the proof of Theorem \ref{T:likehood_block} does not
rely on the specific expression \eqref{E:defU0}--\eqref{E:defUk} and
thus an analogous representation for the score function seems
achievable in many situations.
%

\subsection{A Gaussian approximation for the log-likelihood}
\label{S:A_Gaussian_approximation}
In this section we  intend to give a tractable approximation
for the score function of $(U^{n,\theta}_0,\dots,U^{n,\theta}_k)$.
\subsubsection{Approximation for the diffusion}
\label{S:approximation_for_diffusion} We introduce
$\tilde{\xr}^{\theta}_t=a(x_0,\theta) W_t$ and
$\tilde{\tau}^\theta_t=\dot{a}(x_0,\theta) W_t$ which stand -by
\eqref{E:defXronde} and \eqref{E:def_tau}- for the first order
approximations of $\xr^{n,\theta}_t$ and $\tau^{n,\theta}_t=
\frac{\partial \xr^{n,\theta}_t}{\partial \theta} $. Then, we
consider the quantities obtained by replacing in
\eqref{E:defU0}--\eqref{E:defUk} the process $\xr$ by this Gaussian
approximation
\begin{align}
\label{E:defUtilde0}
\widetilde{U}^\theta_0&:=a(x_0,\theta) \int_0^1 W_s \dd \mu(s) =a(x_0,\theta) \int_0^{1} \mu([s,1]) \dd W_s,\\
\label{E:defUtildej}
\widetilde{U}^\theta_j&:=a(x_0,\theta) \int_{0}^1 (W_{j+s}-W_{j-1+s})
\dd \mu( s), \quad \text{ for $j=1,\dots,k-1$} \\ \nonumber
&=a(x_0,\theta) \int_{j-1}^{j} \mu([0,s-(j-1)]) \dd W_s +a(x_0,\theta)
\int_{j}^{j+1} \mu([s-j,1]) \dd W_s,\\
\label{E:defUtildek}
\widetilde{U}^\theta_k&:=a(x_0,\theta) \int_{0}^1 (W_{k}-W_{k-1+s})
\dd \mu( s), \\ \nonumber &= a(x_0,\theta) \int_{k-1}^{k}
\mu([0,s-(k-1)]) \dd W_s,
\end{align}
where we have repeatedly used the Fubini theorem for stochastic integrals (see \cite{Revuz} p.176). In the next lemma we control the difference between the
$U_j^{n,\theta}$ and their approximation in terms of Sobolev norm.
\begin{lem}
\label{L:compUUtilde}
For all $k,p>1$, there exist constants $c(k,p)$, $c(p)$ such that for
all $j\in\{0,\dots,k\}$:
\begin{align} \label{E:diffUUtilde1}
  \norme{U^{n,\theta}_j-\widetilde{U}^\theta_j }_{2,p} &\le c(k,p)
  n^{-1/2}, \quad & \norme{\widetilde{U}^\theta_j}_{3,p} \le c(p), \\
  \label{E:diffUUtilde2} \norme{\frac{\partial
      U^{n,\theta}_j}{\partial \theta} -\frac{\partial
      \widetilde{U}^{n,\theta}_j }{\partial \theta} }_{2,p} &\le
  c(k,p) n^{-1/2}, \quad & \norme{\frac{\partial
      \widetilde{U}^\theta_j }{\partial \theta}}_{3,p} \le c(p),
\end{align}
\begin{equation}
\label{E:diffUUtilde4}
\forall \: {0 \le j,j' \le k, \quad }
\abs{ E\left( U^{n,\theta}_j U^{n,\theta}_{j'} -
\widetilde{U}^{\theta}_j \widetilde{U}^{\theta}_{j'} \right)  }
\le c(k)n^{-1}.
\end{equation}
\end{lem}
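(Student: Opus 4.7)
The plan is to reduce everything to standard stochastic calculus estimates built on the key observation that, after rescaling, the coefficients $a_n(x,\theta)-a(x_0,\theta)$ and $b_n(x)$ appearing in \eqref{E:defXronde} are of order $n^{-1/2}$ on any compact set on which $\xr^{n,\theta}$ has good moments. Since $\tilde\xr^{\theta}_t = a(x_0,\theta) W_t$ and $\tilde\tau^{\theta}_t=\dot a(x_0,\theta) W_t$ are exactly the processes obtained by freezing the coefficients at $x=x_0$ and dropping the drift in \eqref{E:defXronde}--\eqref{E:def_tau}, each difference $U^{n,\theta}_j-\widetilde U^\theta_j$, resp.\ $\partial_\theta U^{n,\theta}_j-\partial_\theta\widetilde U^\theta_j$, should pick up a factor $n^{-1/2}$. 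The $\D^{3,p}$ bounds on $\widetilde U^\theta_j$ and $\partial_\theta\widetilde U^\theta_j$ are immediate since these random variables live in the first Wiener chaos, with deterministic kernel.

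First I would establish a priori $\mathbf{L}^p$ moments: Burkholder-Davis-Gundy and Gr\"onwall, together with $\|a\|_\infty<\infty$ and $|b_n|\le n^{-1/2}\|b\|_\infty$, give $E[\sup_{t\le k}|\xr^{n,\theta}_t|^p]\le c(k,p)$ uniformly in $n,\theta$; the same argument applied to \eqref{E:def_tau} and to the linear SDEs satisfied by $D\xr^{n,\theta}$, $D^2\xr^{n,\theta}$, $D\tau^{n,\theta}$, $D^2\tau^{n,\theta}$ yields the analogous moment bounds, since all coefficients involved are products of derivatives of $a,\dot a,b$ that are bounded under \ref{regulariteRprime}. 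Using Fubini for stochastic integrals I then write
$$
U^{n,\theta}_j-\widetilde U^\theta_j=\int_0^1\!\!\int_{s+j-1}^{s+j}[a_n(\xr^{n,\theta}_t,\theta)-a(x_0,\theta)]\,dW_t\,d\mu(s)+\int_0^1\!\!\int_{s+j-1}^{s+j}b_n(\xr^{n,\theta}_t)\,dt\,d\mu(s),
$$
and since $|a_n(\xr^{n,\theta}_t,\theta)-a(x_0,\theta)|\le\|\partial_x a\|_\infty n^{-1/2}|\xr^{n,\theta}_t|$ and $|b_n|\le n^{-1/2}\|b\|_\infty$, BDG combined with the moment bound gives $\|U^{n,\theta}_j-\widetilde U^\theta_j\|_p\le c(k,p) n^{-1/2}$. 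Differentiating in the Malliavin sense and using $D_r a_n(\xr^{n,\theta}_t,\theta)=\partial_x a_n(\xr^{n,\theta}_t,\theta)\,D_r\xr^{n,\theta}_t$ with $\partial_x a_n=n^{-1/2}\partial_x a$ propagates the $n^{-1/2}$ factor through to first and second Malliavin derivatives, yielding \eqref{E:diffUUtilde1}. The same recipe applied to $\tau^{n,\theta}-\tilde\tau^\theta$, using additionally that $\partial_\theta a_n(\xr^{n,\theta}_t,\theta)-\dot a(x_0,\theta)=\dot a(x_0+n^{-1/2}\xr^{n,\theta}_t,\theta)-\dot a(x_0,\theta)=O(n^{-1/2}|\xr^{n,\theta}_t|)$ by boundedness of $\partial_x\dot a$, gives \eqref{E:diffUUtilde2}.

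The delicate step is \eqref{E:diffUUtilde4}, which demands $O(n^{-1})$ rather than the $O(n^{-1/2})$ that Cauchy--Schwarz applied to \eqref{E:diffUUtilde1} would give. I split
$$
E(U^{n,\theta}_j U^{n,\theta}_{j'}-\widetilde U^\theta_j\widetilde U^\theta_{j'})= E[(U^{n,\theta}_j-\widetilde U^\theta_j)(U^{n,\theta}_{j'}-\widetilde U^\theta_{j'})]+E[\widetilde U^\theta_j(U^{n,\theta}_{j'}-\widetilde U^\theta_{j'})]+E[(U^{n,\theta}_j-\widetilde U^\theta_j)\widetilde U^\theta_{j'}],
$$
the first summand being $O(n^{-1})$ by Cauchy--Schwarz. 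For the cross terms one exploits that $\widetilde U^\theta_j=\int h_j(s)\,dW_s$ is a first-chaos Wiener integral with \emph{deterministic} kernel $h_j$: the stochastic-integral part of $U^{n,\theta}_{j'}-\widetilde U^\theta_{j'}$ contributes, via polarisation of the It\^o isometry, a term proportional to $\int h_j(t) g_{j'}(t) E[a_n(\xr^{n,\theta}_t,\theta)-a(x_0,\theta)]\,dt$; a Taylor expansion together with the key identity $E[\xr^{n,\theta}_t]=\int_0^t E[b_n(\xr^{n,\theta}_s)]\,ds=O(n^{-1/2})$ makes this $O(n^{-1})$. For the drift part, the Malliavin integration-by-parts formula $E[\widetilde U^\theta_j F]=E[\psH{h_j}{DF}]$ with $F=\int b_n(\xr^{n,\theta}_t)\,dt$, combined with $b_n'=n^{-1}b'$, again yields $O(n^{-1})$. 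The main obstacle is precisely this extra $n^{-1/2}$ gain: it crucially comes from the fact that $\widetilde U^\theta_j$ has a deterministic Malliavin derivative, which allows either the It\^o isometry or the Malliavin IBP to transfer one more factor $n^{-1/2}$ onto the integrand.
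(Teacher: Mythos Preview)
Your proposal is correct and follows essentially the same approach as the paper: the paper likewise exploits that $a_n(\cdot,\theta)-a(x_0,\theta)$ and $b_n$ are $O(n^{-1/2})$ after rescaling, and for \eqref{E:diffUUtilde4} uses the same three-term splitting together with the It\^o isometry against the deterministic kernel of $\tilde\xr^\theta_{t'}=a(x_0,\theta)W_{t'}$ and the key identity $E[\xr^{n,\theta}_t]=\int_0^t E[b_n(\xr^{n,\theta}_s)]\,ds=O(n^{-1/2})$. The only cosmetic difference is that the paper first reduces via Minkowski to the process-level estimates $\sup_{t\le k}\|\xr^{n,\theta}_t-\tilde\xr^\theta_t\|_{2,p}\le c(k,p)n^{-1/2}$ and $\sup_{t,t'\le k}|E[(\xr^{n,\theta}_t-\tilde\xr^\theta_t)\tilde\xr^\theta_{t'}]|\le c(k)n^{-1}$, and handles the drift contribution to the latter by a direct Taylor expansion (using $E[b(x_0)W_{t'}]=0$) rather than your Malliavin integration by parts with $b_n'=n^{-1}b'$---but these are equivalent.
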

\begin{proof}
  The inequalities on the right hand side of
  \eqref{E:diffUUtilde1}--\eqref{E:diffUUtilde2} are immediate by the
  definition of $\widetilde{U}^\theta_j$.

  Comparing expressions of \eqref{E:defU0}--\eqref{E:defUk} with
  \eqref{E:defUtilde0}--\eqref{E:defUtildek}, the two remaining bounds
  in \eqref{E:diffUUtilde1}--\eqref{E:diffUUtilde2} will be a
  consequence of the Minkowski inequality - for the Sobolev norm - and
  of the control on the diffusions:
\begin{align*}
  \sup_{t\leq k}\norme{\xr^{n,\theta}_t-\tilde{\xr}^{\theta}_t}_{2,p}
  + \sup_{t\leq
    k}\norme{\tau^{n,\theta}_t-\tilde{\tau}^{\theta}_t}_{2,p} &\le
  n^{-1/2} c(k,p).
\end{align*}
We only prove the control on $\xr^{n,\theta}$ since the proof for
$\tau^{n,\theta}$ is analogous. Recalling
\eqref{E:defXronde}--\eqref{E:expression_an_bn}, we can write
\begin{align}\nonumber
  & \xr^{n,\theta}_t-\tilde{\xr}^{\theta}_t = \int_0^t
  [a_n(\xr^{n,\theta}_s,\theta)-a(x_0,\theta)] \dd W_s+ \int_0^t
  b_n(\xr^{n,\theta}_s) \dd s
  \\
  &\hspace{-5mm}=\frac{1}{\sqrt{n}} \int_0^t \int_0^1 a'_x(x_0 +
  \frac{u\xr^{n,\theta}_s}{\sqrt{n}},\theta) \xr^{n,\theta}_s \dd u
  \dd W_s+ \frac{1}{\sqrt{n}} \int_0^t b(x_0+
  \frac{\xr^{n,\theta}_s}{\sqrt{n}}) \dd s. \label{eq:manu1}
\end{align}
But we know \cite{Nualart1} that under \ref{regulariteRprime} the
variables $\xr^{n,\theta}$ belong to $\D^{3,\infty}$ with a control
(independent of $\theta,n$): $\sup_{u_1,u_2 \le s\le k}
E(|D^2_{u_1,u_2} \xr^{n,\theta}_s |^p ) \le c(p,k)$. This is
sufficient to deduce
$\norme{\xr^{n,\theta}_t-\tilde{\xr}^{\theta}_t}_{2,p} \le
n^{-1/2}c(p,k)$ after a few computations.

To obtain \eqref{E:diffUUtilde4} note that by \eqref{E:diffUUtilde1}
it is sufficient to show $E\left(
  (U^{n,\theta}_j-\widetilde{U}^\theta_j) \widetilde{U}^\theta_{j'}
\right) \le c(k)n^{-1}$. This property will follow again from an
analogous relation on the diffusion,
$$
\sup_{t,t'\le k}\abs{ E\left( ( \xr^{n,
      \theta}_t-\tilde{\xr}^\theta_t ) \tilde{\xr}^\theta_{t'}
  \right)} \le c(k) n^{-1}.
$$
Indeed, from \eqref{eq:manu1}, the above expectation is equal to
\begin{multline*}
  n^{-1/2}\int_0^{t \wedge t'} \int_0^1 E \left[ a'_x(x_0 +n^{-1/2}
    u\xr^{n,\theta}_s,\theta) \xr^{n,\theta}_s\right]
  \dd u a(x_0,\theta_0) \dd s +
  \\
  n^{-1/2} \int_0^{t} E \left[ b( x_0 +
    n^{-1/2}\xr^{n,\theta}_s) W_{t'} \right]a(x_0,\theta)\dd
  s.
\end{multline*}
Using $\abs{ E[a'_x(x_0,\theta) \xr^{n,\theta}_s]} =\abs{ \int_0^s
  a'_x(x_0,\theta) E[ b_n(\xr^{n,\theta}_u) ] \dd u }\le c n^{-1/2}$,
$E \left[ b( x_0) W_{t'} \right]=0$ and the boundedness of $a''_{xx}$
and $b'$, we get the required estimate.
\end{proof}

\subsubsection{Approximation for the log-likelihood}
Let us denote the deterministic tridiagonal matrix $\widetilde{K}$ of
size $(k+1) \times (k+1)$,
\begin{equation*}
\widetilde{K}=
\begin{bmatrix}
  v_1 & c & 0   &0  & 0   \\
  c   & v_1+v_2 & \ddots &0 &0 \\
  0   & \ddots  & \ddots  & \ddots &0 \\
  0 & 0 &\ddots &v_1+v_2 &c \\
  0 &0 & 0 & c & v_2 \\
\end{bmatrix},
\end{equation*}
where the entries of the matrix are:
\begin{eqnarray*}
v_1=\int_0^1 \mu([s,1])^2 \dd s , \quad v_2=\int_0^1 \mu([0,s])^2 \dd s, \quad
c=\int_0^1 \mu([0,s])
\mu([s,1]) \dd s.
\end{eqnarray*}
It can be easily checked that $a^2(x_0,\theta)\widetilde{K}$ is the
covariance matrix of the Gaussian vector
$(\widetilde{U}^\theta_0,\dots,\widetilde{U}^\theta_k)$ and that it
is invertible using \eqref{E:hypomu}.
Now the idea is to introduce the score function that would be
produced from the observation of this Gaussian vector. Hence we let:
\begin{equation}
\label{E:def_Lronde}
\hspace{-4mm}\Lr_{x_0}(u_0,\dots,u_k,\theta)=
\frac{\dot{a}}{a}(x_0,\theta) \left\{
 a(x_0,\theta)^{-2}
\sum_{0\le j ,j'  \le k} u_j \widetilde{K}^{-1}_{j,j'} u_{j'} - (k+1)
\right\}.
\end{equation}
In this section, we will show that this quantity is an approximation
for the true score function $\frac{\dot{p}}{p}$:
\begin{thm}
\label{T:approx_score_gauss}
Let us consider the difference,
\begin{equation}
\label{eq:manu2}
\frac{\dot{p}_{x_0}}{p_{x_0}}(u_0,\dots,u_k,\theta)-
\Lr_{x_0}(u_0,\dots,u_k,\theta):= r_{x_0}(u_0,\dots,u_k,\theta).
\end{equation}
Then we have the following bounds:
\begin{align}\label{E:cond_r_centrage}
  \abs{E\left[r_{x_0}(U^{n,\theta}_0,\dots,U^{n,\theta}_k, \theta)
    \right]} &\le c(k) n^{-1}, \\\label{E:cond_r_moment} \forall p\ge
  1, \quad
  E\left[\abs{r_{x_0}(U^{n,\theta}_0,\dots,U^{n,\theta}_k,\theta)}^{p}\right]^{\frac{1}{p}}
  &\le c(k,p) n^{-1/2}.
\end{align}
\end{thm}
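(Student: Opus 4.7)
The plan is to recognize $\Lr_{x_0}(\cdot,\theta)$ as the score function of the Gaussian vector $(\widetilde U^\theta_0,\ldots,\widetilde U^\theta_k)$, compare it with the Malliavin representation of $\dot p_{x_0}/p_{x_0}$ from Theorem \ref{T:likehood_block}, and bound the discrepancy using Lemma \ref{L:compUUtilde} together with Proposition \ref{P:inversibleMalliaB}. Indeed, by \eqref{E:defUtilde0}--\eqref{E:defUtildek} the vector $(\widetilde U^\theta_0,\ldots,\widetilde U^\theta_k)$ is centered Gaussian with invertible covariance $a^2(x_0,\theta)\widetilde K$, and differentiating its log-density in $\theta$ gives $(\partial_\theta \tilde p/\tilde p)(u,\theta)=\Lr_{x_0}(u,\theta)$.

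For \eqref{E:cond_r_centrage}, I exploit that the true score has zero mean under $\PP^\theta$: differentiating $\int p_{x_0}(u,\theta)\dd u=1$ (legitimate thanks to the Gaussian bounds of Theorem \ref{T:densiteUV}) yields $E[(\dot p_{x_0}/p_{x_0})(U^{n,\theta}_0,\dots,U^{n,\theta}_k,\theta)]=0$, so $E[r_{x_0}(U^{n,\theta}_0,\dots,U^{n,\theta}_k,\theta)]=-E[\Lr_{x_0}(U^{n,\theta}_0,\dots,U^{n,\theta}_k,\theta)]$. Plugging in the explicit formula \eqref{E:def_Lronde} and using \eqref{E:diffUUtilde4} to replace $E[U^{n,\theta}_j U^{n,\theta}_{j'}]$ by $E[\widetilde U^{\theta}_j \widetilde U^{\theta}_{j'}]=a^2(x_0,\theta)\widetilde K_{j,j'}$ modulo $O(n^{-1})$, the quadratic form collapses to $(k+1)$ and cancels the constant term, leaving the announced $c(k)n^{-1}$ bound.

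The bulk of the work is \eqref{E:cond_r_moment}. Denote by $L^{n,\theta}$ the Malliavin integrand appearing in Theorem \ref{T:likehood_block} and introduce its Gaussian analogue
$$\bar L^\theta := \sum_{0\le j,j'\le k}\frac{\partial \widetilde U^\theta_j}{\partial\theta}\,\frac{\widetilde K^{-1}_{j,j'}}{a^2(x_0,\theta)}\,D\widetilde U^\theta_{j'}.$$
A direct Skorohod integration by parts---using that $\widetilde K^{-1}/a^2(x_0,\theta)$ is deterministic and that $\delta(D\widetilde U^\theta_{j'})=\widetilde U^\theta_{j'}$ since $\widetilde U^\theta_{j'}$ is a first-order Wiener integral---gives $\delta(\bar L^\theta)=\Lr_{x_0}(\widetilde U^\theta_0,\dots,\widetilde U^\theta_k,\theta)$. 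Since $\Lr_{x_0}(U^{n,\theta}_0,\dots,U^{n,\theta}_k,\theta)$ is $\sigma(U^{n,\theta}_0,\dots,U^{n,\theta}_k)$-measurable, the conditional Jensen inequality produces
$$\norme{r_{x_0}(U^{n,\theta},\theta)}_{\L^p}\le \norme{\delta(L^{n,\theta})-\delta(\bar L^\theta)}_{\L^p}+\norme{\Lr_{x_0}(\widetilde U^\theta,\theta)-\Lr_{x_0}(U^{n,\theta},\theta)}_{\L^p}.$$
The second term is a quadratic-form difference in $U^{n,\theta}-\widetilde U^\theta$, bounded by H\"older and Lemma \ref{L:compUUtilde} at rate $n^{-1/2}$. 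For the first term I invoke Meyer's inequality $\norme{\delta(h)}_{\L^p}\le c(p)\normeDqp{h}$ (with $q=1$, $h$ valued in $H$) and expand $L^{n,\theta}-\bar L^\theta$ summand by summand: the differences $\partial_\theta U^{n,\theta}_j-\partial_\theta\widetilde U^\theta_j$ and $DU^{n,\theta}_{j'}-D\widetilde U^\theta_{j'}$ are of order $n^{-1/2}$ by Lemma \ref{L:compUUtilde}, while the matrix difference is handled via $K(\theta)_{j,j'}-a^2\widetilde K_{j,j'}=\psH{D(U^{n,\theta}_j-\widetilde U^\theta_j)}{DU^{n,\theta}_{j'}}+\psH{D\widetilde U^\theta_j}{D(U^{n,\theta}_{j'}-\widetilde U^\theta_{j'})}$, combined with the expansion $D(K^{-1})=-K^{-1}(DK)K^{-1}$.

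The main obstacle is the uniform $\D^{1,p}$-control of $K(\theta)^{-1}$ in $n$ and $\theta$: bounding the Malliavin derivative of $K(\theta)^{-1}$ requires negative moments of $\det K(\theta)$, and this is precisely what Proposition \ref{P:inversibleMalliaB} provides. Once this negative-moment bound is in hand, the rest reduces to repeated applications of Meyer's inequality, H\"older, and the Sobolev estimates of Lemma \ref{L:compUUtilde}, with the overall cost being one Malliavin derivative---consistent with the $\D^{2,p}$ and $\D^{3,p}$ regularity asserted in Lemma \ref{L:compUUtilde} and Proposition \ref{P:inversibleMalliaB}.
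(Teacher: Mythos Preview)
Your proof is correct and follows the paper's approach: introduce the Gaussian integrand $\bar L^\theta$ (the paper's $\widetilde L^\theta$), compute $\delta(\bar L^\theta)=\Lr_{x_0}(\widetilde U^\theta,\theta)$, and decompose $r_{x_0}$ into $E[\delta(L^\theta-\bar L^\theta)\mid U^{n,\theta}]$ (controlled via Meyer's inequality together with Lemma~\ref{L:compUUtilde} and Proposition~\ref{P:inversibleMalliaB}) plus the quadratic-form difference $\Lr_{x_0}(\widetilde U^\theta,\theta)-\Lr_{x_0}(U^{n,\theta},\theta)$. One minor point on \eqref{E:cond_r_centrage}: the paper obtains the centering directly from the zero-mean property of the Skorohod integral ($E[\delta(L^\theta-\widetilde L^\theta)]=0$), which is cleaner than your differentiation-under-the-integral argument, since the bounds of Theorem~\ref{T:densiteUV} concern only the two-dimensional marginal and do not by themselves dominate $\dot p_{x_0}$.
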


\begin{proof}
  Keeping in mind the definition of $L^\theta$ (see \eqref{E:defL}),
  we introduce its approximation based on the Gaussian quantities
  defined above:
\begin{equation*}
\widetilde{L}^\theta:=\sum_{j=0}^k
\sum_{j'=0}^k
\frac{\partial \widetilde{U}_j^\theta}{\partial \theta}
a(x_0,\theta)^{-2}
\widetilde{K}^{-1}_{j,j'}
D\widetilde{U}^\theta_{j'}.
\end{equation*}
{\bf The first step} is to obtain the following control on the
difference $r_1:=L^\theta-\widetilde{L}^\theta$:
\begin{equation}\label{E:LLtilde}
\forall  p>1,\quad \norme{r_1}_{\mathbb{D}^{1,p}(H)}
\le c(k,p) n^{-1/2}.
\end{equation}
Actually, it is a easy consequence of Lemma \ref{L:compUUtilde},
Proposition \ref{P:inversibleMalliaB} and the invertibility of
$\widetilde K$, noting that the Malliavin covariance matrix of
$\widetilde U^\theta$ coincides with the covariance matrix
$a^2(x_0,\theta) \widetilde K$ of the Gaussian vector $\widetilde
U^\theta$. We omit further details.

\noindent {\bf The second step} is to obtain a simple expression for
$\delta(\widetilde{L}^\theta)$. To see this, we first use the relation
for $F \in \D^{1,\infty}, u\in \D^{1,\infty}(H)$,
$\delta(Fu)=F\delta(u)-\psH{D.F}{u}$ (see \cite{Nualart1}):
\begin{multline*}
  \delta(\widetilde{L}^\theta)=\sum_{j=0}^k \sum_{j'=0}^k
  \frac{\partial \widetilde{U}_j^\theta}{\partial \theta}
  a(x_0,\theta)^{-2}\widetilde{K}^{-1}_{j,j'}
  \delta (D (\widetilde{U}^\theta_{j'})) \\
  - \sum_{j=0}^k \sum_{j'=0}^k a(x_0,\theta)^{-2}
  \widetilde{K}^{-1}_{j,j'} \psH{D.\frac{\partial
      \widetilde{U}_j^\theta}{\partial \theta}} {D.
    \widetilde{U}^\theta_{j'}}.
\end{multline*}
On the one hand, $\delta (D
(\widetilde{U}^\theta_{j'}))=\widetilde{U}^\theta_{j'}$ ($\delta \circ
D$ is the identity operator on the first chaos space). On the other
hand, one has $\frac{\partial \widetilde{U}_j^\theta}{\partial
  \theta}=
\frac{\dot{a}(x_0,\theta)}{a(x_0,\theta)}\widetilde{U}_j^\theta$ by
\eqref{E:defUtilde0}--\eqref{E:defUtildek}. We deduce
\begin{align}\nonumber
  \delta(\widetilde{L}^\theta)&=
  \frac{\dot{a}(x_0,\theta)}{a(x_0,\theta)} \sum_{j=0}^k \sum_{j'=0}^k
  \widetilde{U}_j^\theta a(x_0,\theta)^{-2}\widetilde{K}^{-1}_{j,j'}
  \widetilde{U}^\theta_{j'} \\ & \hspace{3cm}\nonumber
  -\frac{\dot{a}(x_0,\theta)}{a(x_0,\theta)} \sum_{j=0}^k
  \sum_{j'=0}^k a(x_0,\theta)^{-2} \widetilde{K}^{-1}_{j,j'}
  \psH{D.\widetilde{U}_j^\theta} {D. \widetilde{U}^\theta_{j'}} \\
  \nonumber
  &= \frac{\dot{a}(x_0,\theta)}{a(x_0,\theta)} \sum_{j=0}^k
  \sum_{j'=0}^k \widetilde{U}_j^\theta a(x_0,\theta)^{-2}
  \widetilde{K}^{-1}_{j,j'} \widetilde{U}^\theta_{j'}-
  \frac{\dot{a}(x_0,\theta)}{a(x_0,\theta)} (k+1).
\end{align}
Now set
$$
r_2= \frac{\dot{a}(x_0,\theta)}{a^3(x_0,\theta)} \sum_{0\le j,j'
  \le k} \widetilde{U}_j^\theta \widetilde{K}^{-1}_{j,j'}
\widetilde{U}^\theta_{j'} -\frac{\dot{a}(x_0,\theta)}{a^3(x_0,\theta)}
\sum_{0\le j,j' \le k} U_j^{n,\theta} \widetilde{K}^{-1}_{j,j'}
U^{n,\theta}_{j'},
$$
and take the conditional expectation in the relation
$\delta(L^\theta)= \delta(\widetilde{L}^\theta)+\delta(r_1)$: by
Theorem \ref{T:likehood_block}, we get \eqref{eq:manu2} with
$r_{x_0}(u_0,\dots,u_k,\theta)= E\left( \delta(r_1) \mid
  (U^{n,\theta}_j)_j=(u_j)_j \right) + E\left( r_2 \mid
  (U^{n,\theta}_j)_j=(u_j)_j \right)$.

\noindent {\bf The final step} in the proof is to show that
$r_{x_0}$ satisfies conditions
\eqref{E:cond_r_centrage}--\eqref{E:cond_r_moment}.  For the first
condition, since the Skorohod integral has zero mean, we have
$E[r_{x_0}(U^{n,\theta}_0,\dots ,U^{n,\theta}_k,\theta)]=E(r_2)$ and
we conclude using \eqref{E:diffUUtilde4}.

We now prove \eqref{E:cond_r_moment}. The conditional expectation
being a contraction on $\mathbf{L}^p$ it is sufficient to prove
$$
E(\abs{\delta(r_1)}^p)^{\frac{1}{p}} \le c(p,k) n^{-1/2}, \quad
E(\abs{r_2}^p)^{\frac{1}{p}} \le c(p,k) n^{-1/2}.
$$
The first estimate follows from \eqref{E:LLtilde} and the
continuity of the operator $\delta$ from $\D^{1,p}(H)$ to
$\mathbf{L}^p$.  The second one is an immediate consequence of Lemma
\ref{L:compUUtilde}.
\end{proof}
\begin{rem}
  Let us note that the constants $c(k), c(k,p)$ in Theorem
  \ref{T:approx_score_gauss} should increase as the block length $k$
  goes to infinity since the Gaussian approximation ceases to be valid
  in that case.  However in the sequel we shall not need a precise
  evaluation of this dependence on $k$ since we will have the
  possibility to conveniently choose the growth rate of $k=k_n$.
\end{rem}
In the following sections we will need this corollary of Theorem
\ref{T:approx_score_gauss}.
\begin{cor}
\label{C:majdotpp}
We have for all $p>1$,
$$
E\left[ \abs{
    \frac{\dot{p}_{x_0}}{p_{x_0}}(U^{n,\theta}_0,\dots,U^{n,\theta}_k,\theta)}^p\right]
\le c(k,p).
$$
\end{cor}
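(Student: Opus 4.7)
The idea is to combine the Gaussian approximation of Theorem \ref{T:approx_score_gauss} with moment bounds on the $U^{n,\theta}_j$ coming from Lemma \ref{L:compUUtilde}. Using the notation of \eqref{eq:manu2}, write
\begin{equation*}
\frac{\dot{p}_{x_0}}{p_{x_0}}(U^{n,\theta}_0,\dots,U^{n,\theta}_k,\theta)
= \Lr_{x_0}(U^{n,\theta}_0,\dots,U^{n,\theta}_k,\theta)
+ r_{x_0}(U^{n,\theta}_0,\dots,U^{n,\theta}_k,\theta),
\end{equation*}
so that by the triangle inequality in $\mathbf{L}^p$, it suffices to bound the $\mathbf{L}^p$ norm of each of the two terms on the right-hand side by a constant $c(k,p)$.

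The remainder $r_{x_0}$ is already controlled by \eqref{E:cond_r_moment} of Theorem \ref{T:approx_score_gauss} by $c(k,p) n^{-1/2} \le c(k,p)$, so no further work is needed there. The main term to handle is $\Lr_{x_0}$, whose explicit form \eqref{E:def_Lronde} shows it is an affine-quadratic functional of $(U^{n,\theta}_0,\dots,U^{n,\theta}_k)$ with coefficients depending only on $x_0,\theta$ (through $\dot a/a$ and $a^{-2}$, both bounded uniformly thanks to assumption \ref{regulariteR}) and on the deterministic matrix $\widetilde K^{-1}$, whose entries depend only on $\mu$ and $k$ (and are finite since $\widetilde K$ is invertible by \eqref{E:hypomu}). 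Hence
\begin{equation*}
\abs{\Lr_{x_0}(U^{n,\theta}_0,\dots,U^{n,\theta}_k,\theta)}
\le c(k)\Bigl[\sum_{0\le j,j'\le k} \abs{U^{n,\theta}_j}\abs{U^{n,\theta}_{j'}} + 1\Bigr].
\end{equation*}

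Taking $\mathbf{L}^p$ norms and applying Cauchy--Schwarz reduces the bound to a control of $E[\abs{U^{n,\theta}_j}^{2p}]$, uniform in $n$ and $\theta$. This uniform control is provided by Lemma \ref{L:compUUtilde}: from \eqref{E:diffUUtilde1} and the triangle inequality for the Sobolev norm $\|\cdot\|_{2,2p}$ (which dominates the $\mathbf{L}^{2p}$ norm),
\begin{equation*}
\norme{U^{n,\theta}_j}_{2,2p}
\le \norme{U^{n,\theta}_j-\widetilde U^\theta_j}_{2,2p} + \norme{\widetilde U^\theta_j}_{2,2p}
\le c(k,p)n^{-1/2} + c(p) \le c(k,p),
\end{equation*}
which yields $E[\abs{U^{n,\theta}_j}^{2p}]^{1/(2p)} \le c(k,p)$. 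Combining these pieces gives the required bound on $\Lr_{x_0}$ and hence on $\dot p_{x_0}/p_{x_0}$, completing the proof.

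Nothing in the argument is hard; the only technical point is verifying that all constants (boundedness of $\dot a/a$, invertibility of $\widetilde K$, moment control) are indeed available under the assumptions already in force, which they are.
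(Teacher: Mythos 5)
Your proposal is correct and follows exactly the paper's route: decompose $\dot p_{x_0}/p_{x_0}=\Lr_{x_0}+r_{x_0}$ via Theorem \ref{T:approx_score_gauss}, control $r_{x_0}$ by \eqref{E:cond_r_moment}, and bound $\Lr_{x_0}$ from its explicit quadratic form. The paper dismisses the last step as ``clear''; you have merely filled in the moment bounds on $U^{n,\theta}_j$ from Lemma \ref{L:compUUtilde} that make it so.
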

\begin{proof}
  By Theorem \ref{T:approx_score_gauss} it is sufficient to show that
  $ E\left[ \abs{
      \Lr_{x_0}(U^{n,\theta}_0,\dots,U^{n,\theta}_k,\theta)}^p\right]
  \le c(k,p).$ But from the expression of $\Lr_{x_0}$, this estimate
  is clear.
\end{proof}

\section{Asymptotic study for the augmented model} \label{S:Asymptotic_study}

In this paragraph we establish Theorem \ref{T:LAMN_sur_observation}.
Let us recall some notations: we now deal with the diffusion given by
\eqref{E:EDS_theta}--\eqref{E:cond_init_xi}; $k_n$ is some integer in
$\{1,\dots,n\}$, $L_n=\left\lfloor n/k_n \right\rfloor$ and our
observation consists of the $L_n+1$ blocks $\Block_0,\dots,
\Block_{L_n}$ described in Section \ref{S:Main}. The length of the
block $\Block_l$ is $k_{n,l}+1$, where $k_{n,l}=k_n$ if $l\le L_n-1$
and $k_{n,L_n}=n-L_n k_n$. For sake of simplicity in the sequel we
sometimes omit the dependence with respect to $n$ and
$l$ of the block size, and let $k_{n,l}=k$ with a slight abuse of notation in particular
for the last block of data.

To be able to use the results of the Section \ref{S:Score}, we
introduce on each block the random variables corresponding to the
definitions \eqref{E:defU0}--\eqref{E:defUk} for the first block.
Hence for $l \in \{0,\dots,L_n\}$, we define the $k_{n,l}+1$
following variables:
\begin{align*}
  U_{0,l}&=n^{\frac{1}{2}}( \Xb_{k l}-X_{\frac{kl}{n}} ),
  \\
  U_{1,l}&=n^{\frac{1}{2}}( \Xb_{k l+1}-\Xb_{k l} ),
  \\
  & \: \: \: \vdots \nonumber
  \\
  U_{k-1,l}&=n^{\frac{1}{2}}( \Xb_{kl+k-1}-\Xb_{kl+k-2} ),
  \\
  U_{k,l}&=n^{\frac{1}{2}}( X_{\frac{k(l+1)}{n}}-\Xb_{kl+k-1}).
\end{align*}
Clearly the observation of the $(U_{j,l})$ for $l\in \{0,\dots,L_n\},
j \in \{0,\dots,k_{n,l}\}$ is equivalent to the observation of the
$L_n+1$ blocks.  Using the Markov property for the process $X$ it
appears that the law of the vector $(U_{j,l})_{j=0,\dots,k_{n,l}}$
conditionally to all the variables $U_{j,l'}$ with $l'<l, j \in
\{0,\dots,k_{n,l'}\}$ is the same as conditionally to $X_{kl/n}$ only;
moreover this law - conditionally to $X_{kl/n}=x_0$ - coincides with
that of the vector $(U^{n,\theta}_0,\dots, U^{n,\theta}_k)$ studied in
Section \ref{S:Score}.  Thus it admits the density
$p_{X_{\frac{kl}{n}}}(u_0,\dots,u_k,\theta)$ studied in Sections
\ref{Ss:An_exact_expression}--\ref{S:A_Gaussian_approximation}.  Hence
the log-likelihood of the augmented model admits the additive
structure:
\begin{align*}
  \ln (Z^{n,\aug}_{\theta_0,\theta_0+u_nh}(\mathcal{O}^{n,\aug}))
  &=\sum_{l=0}^{L_n} \ln
  \frac{p_{X_{\frac{kl}{n}}}(U_{0,l},\dots,U_{k,l},\theta_0+u_n
    h)}{p_{X_{\frac{kl}{n}}}(U_{0,l},\dots,U_{k,l},\theta_0)}
  \\
  &=\sum_{l=0}^{L_n} \int_{\theta_0}^{\theta_0+u_n h}
  \frac{\dot{p}_{X_{\frac{kl}{n}}}(U_{0,l},
    \dots,U_{k,l},s)}{p_{X_{\frac{kl}{n}}}(U_{0,l},\dots,U_{k,l},s)}
  \dd s.
\end{align*}
Owing to Theorem \ref{T:approx_score_gauss}, we deduce the
decomposition
\begin{multline*}
  \ln (Z^{n,\aug}_{\theta_0,\theta_0+u_nh}(\mathcal{O}^{n,\aug}))
  =\sum_{l=0}^{L_n} \int_{\theta_0}^{\theta_0+u_n h} \Lr_{
    X_{\frac{kl}{n}} }
  (U_{0,l},\dots,U_{k,l},s) \dd s \\
  + \sum_{l=0}^{L_n} \int_{\theta_0}^{\theta_0+u_n h} r_{
    X_{\frac{kl}{n}} } (U_{0,l},\dots,U_{k,l},s) \dd s.
\end{multline*}
In the above decomposition, we will show in Sections
\ref{S:The_explicit}--\ref{S:The_negligible} that the explicit term
involving $\Lr_{x_0}$ governs the asymptotic behavior of the
log-likelihood ratio; the other term does not contribute in the limit.
\subsection{Proof of Theorem \ref{T:LAMN_sur_observation}: the explicit term}\label{S:The_explicit}
Let us introduce a slight modification of $\Lr_{X_{\frac{kl}{n}}}$,
which has the advantage of being a smoother function w.r.t.
$\theta$:
\begin{equation}\label{E:def_xi}
\xi_{l,n}(\theta)= \frac{\dot{a}}{a}(X_{\frac{kl}{n}},\theta_0)
\left\{
 a(X_{\frac{kl}{n}},\theta)^{-2}
\sum_{0\le j ,j'  \le k} U_{j,l} \widetilde{K}^{-1}_{j,j'} U_{j',l}
- (k+1) \right\},
\end{equation}
and we set $N_n^{\aug}=u_n\sum_{l=0}^{L_n} \xi_{l,n}(\theta_0)$ and
$I_n^{\aug}=-u^2_n\sum_{l=0}^{L_n} \frac{\partial
\xi_{l,n}}{\partial
  \theta}(\theta_0)$.
\begin{prop}\label{P:termPal}
  If $k_n \to \infty$ slowly enough,
\begin{equation}
\label{E:term_LAMN} \sum_{l=0}^{L_n} \int_{\theta_0}^{\theta_0+u_n
h} \Lr_{ X_{\frac{kl}{n}} } (U_{0,l},\dots,U_{k,l},s) \dd s = h
N_n^\aug - \frac{h^2}{2} I_n^\aug+ R_n,
\end{equation}
where $ I^\aug_n \xrightarrow[n \to \infty]{\mathbb{P}^{\theta_0}}
\mathcal{I}_{\theta_0} $, $R_n \xrightarrow[n \to
\infty]{\mathbb{P}^{\theta_0}} 0$ and there exists an extra random
variable $N\sim \mathcal{N}(0,1)$ independent of the process $X$
such that, $N^\aug_n$ converges stably in law under
$\mathbb{P}^{\theta_0}$ to $N \sqrt{\mathcal{I}_{\theta_0}}$.
\end{prop}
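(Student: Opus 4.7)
The plan is to perform a second-order Taylor expansion in $s$ of each integrand $f_l(s):=\Lr_{X_{kl/n}}(U_{0,l},\dots,U_{k,l},s)$ on $[\theta_0,\theta_0+u_nh]$, identifying the linear coefficient with $N_n^\aug$ and the quadratic coefficient with $-I_n^\aug$. Concretely,
\[
\int_{\theta_0}^{\theta_0+u_nh} f_l(s)\,\dd s = u_n h\, f_l(\theta_0) + \tfrac{(u_nh)^2}{2}\dot f_l(\theta_0) + \tfrac{1}{2}\int_{\theta_0}^{\theta_0+u_nh}(\theta_0+u_nh-s)^2 \ddot f_l(s)\,\dd s.
\]
By construction $f_l(\theta_0)=\xi_{l,n}(\theta_0)$, so summing produces $hN_n^\aug$. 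For the second-order term, I would replace $\dot f_l(\theta_0)$ by $\dot\xi_{l,n}(\theta_0)$: their difference equals $\partial_\theta[\dot a/a](X_{kl/n},\theta_0)\cdot\{a(X_{kl/n},\theta_0)^{-2}\Lambda_l-(k+1)\}$, where $\Lambda_l:=\sum_{j,j'} U_{j,l}\widetilde K^{-1}_{j,j'} U_{j',l}$. By the Gaussian approximation \eqref{E:diffUUtilde4}, each bracket is conditionally centered up to an $O(1/n)$ bias; using the conditional independence of blocks given $X$, its $u_n^2$-weighted sum is $O_\PP(L_n^{-1/2}) + O(k_n^{-1})=o_\PP(1)$ provided $k_n\to\infty$. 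The third-order integral remainder sums to an $L^1(\PP)$ quantity of order $u_n^3 L_n \cdot c(k_n)$, which is negligible since $\ddot f_l$ is polynomial of degree $2$ in the $U_{j,l}$ with bounded coefficients and hence has moments controlled by Lemma \ref{L:compUUtilde} (a bound entirely analogous to Corollary \ref{C:majdotpp}).

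Convergence $I_n^\aug\to\mathcal{I}_{\theta_0}$ in probability comes from a law of large numbers. Direct differentiation of \eqref{E:def_xi} at $\theta_0$ gives $\dot\xi_{l,n}(\theta_0)= -2(\dot a/a)(X_{kl/n},\theta_0)^2\, a(X_{kl/n},\theta_0)^{-2}\Lambda_l$ (only $a^{-2}$ depends on $\theta$ in $\xi_{l,n}$), so
\[
I_n^\aug = \frac{2}{n}\sum_{l=0}^{L_n}\Bigl(\frac{\dot a}{a}\Bigr)^2\!(X_{kl/n},\theta_0)\, a(X_{kl/n},\theta_0)^{-2}\Lambda_l .
\]
Replacing $\Lambda_l$ by its Gaussian conditional mean $E[\Lambda_l\mid X_{kl/n}]=(k+1)a^2(X_{kl/n},\theta_0)+O(k/n)$, obtained from Lemma \ref{L:compUUtilde}\eqref{E:diffUUtilde4} applied to $\widetilde U^{\theta_0}$, produces the Riemann-like sum $\tfrac{2(k_n+1)}{n}\sum_l(\dot a/a)^2(X_{kl/n},\theta_0)$, which converges in $\PP^{\theta_0}$-probability to $\mathcal{I}_{\theta_0}$ thanks to continuity of $X$, $(k_n+1)/k_n\to 1$, and the condition $k_n/n\to 0$. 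The centered fluctuations $\Lambda_l-E[\Lambda_l\mid X_{kl/n}]$, conditionally independent across blocks given $X$, have conditional variance bounded by $c(k_n)$, so their contribution to $I_n^\aug$ has total variance $O(L_n k_n^2/n^2)=O(k_n/n)$, which is $o_\PP(1)$ if $k_n$ grows slowly.

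For the stable CLT of $N_n^\aug$, let $\mathcal{F}_l^n:=\sigma(\Block_{l'},\ l'\le l)$. The Markov property and the almost stationarity property ensure that $u_n\xi_{l,n}(\theta_0)$ is, after subtraction of the conditional bias $E[\xi_{l,n}(\theta_0)\mid \mathcal{F}_{l-1}^n]=O(k_n/n)$ (centered to leading order, with error controlled via Theorem \ref{T:approx_score_gauss}), a triangular array of martingale differences adapted to $(\mathcal{F}_l^n)$. The bias summed against $u_n$ is $O(L_n k_n/n \cdot u_n)=O(1/\sqrt{n})\to 0$. Its conditional quadratic variation equals the Gaussian variance of $a(X_{kl/n},\theta_0)^{-2}\Lambda_l-(k+1)$ up to $o_\PP(1)$, and a direct computation with $\Sigma=a^2\widetilde K$, $A=a^{-2}\widetilde K^{-1}$ gives $\mathrm{Var}=2\mathrm{Tr}((A\Sigma)^2)=2(k+1)$, so the sum matches $\mathcal{I}_{\theta_0}$ at the limit. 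The Lindeberg condition follows from uniform $L^{2+\varepsilon}$ bounds on $\xi_{l,n}(\theta_0)$ (same type of argument as Corollary \ref{C:majdotpp}). Applying Jacod's stable martingale CLT and noting that the limit $\mathcal{I}_{\theta_0}$ is $\sigma(X)$-measurable yields stable convergence of $N_n^\aug$ to $N\sqrt{\mathcal{I}_{\theta_0}}$ with $N\sim\mathcal{N}(0,1)$ independent of $X$.

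The main obstacle is balancing the two opposing requirements on $k_n$: the Gaussian approximation error of Theorem \ref{T:approx_score_gauss} carries constants $c(k,p)$ that blow up as $k\to\infty$, but the Riemann-sum argument for $I_n^\aug$ and the centering of $\xi_{l,n}(\theta_0)$ require $k_n\to\infty$ so that the conditional mean of $\Lambda_l$ approximates $(k+1)a^2$ up to lower order. The proof is made possible by the freedom to pick $k_n$ diverging slowly enough that $c(k_n,p)\cdot k_n^{-1}\to 0$ for the relevant $p$, which is exactly the content of the assumption "$k_n\to\infty$ slowly enough" in the statement. The remaining delicate point is the stable (rather than merely distributional) convergence of $N_n^\aug$, which is needed downstream to obtain joint convergence with $I_n^\aug$ and to derive the LAMN property; here the martingale structure is essential because the observations are non-Markovian.
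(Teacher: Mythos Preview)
Your overall strategy (Taylor expansion, Gaussian approximation of $\Lambda_l$, martingale CLT for $N_n^\aug$, LLN for $I_n^\aug$) matches the paper's, but there is a genuine regularity gap in the way you set up the expansion.

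Under Assumption \ref{regulariteR} the function $a$ is only $\mathcal{C}^{1+\gamma}$ in $\theta$, i.e.\ $\dot a$ is $\gamma$-H\"older but $\ddot a$ need not exist. Your integrand $f_l(s)=\Lr_{X_{kl/n}}(U_{0,l},\dots,U_{k,l},s)$ carries the factor $\frac{\dot a}{a}(X_{kl/n},s)$; hence $f_l$ is in general not even $C^1$ in $s$, let alone $C^2$. The second-order Taylor formula with integral remainder $\tfrac12\int(\theta_0+u_nh-s)^2\ddot f_l(s)\,\dd s$ is therefore not available, and the difference you write as $\partial_\theta[\dot a/a](X_{kl/n},\theta_0)\cdot\{a^{-2}\Lambda_l-(k+1)\}$ already invokes the non-existent second derivative of $a$ in $\theta$.

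This is exactly why the paper introduces $\xi_{l,n}(\theta)$ with the prefactor $\frac{\dot a}{a}$ \emph{frozen at $\theta_0$}: one first splits $\int\Lr\,\dd s=\int\xi_{l,n}(s)\,\dd s+R_n^{(1)}$, where $R_n^{(1)}$ collects the factor $[\frac{\dot a}{a}(X_{kl/n},s)-\frac{\dot a}{a}(X_{kl/n},\theta_0)]$ and is handled without differentiating it (via conditional centering and Lemma~\ref{lemma:9}). Only then does one Taylor-expand $\xi_{l,n}$, which is $C^1$ with $\gamma$-H\"older derivative, producing a remainder $R_n^{(2)}$ controlled by $u_n^{2+\gamma}$ rather than $u_n^{3}$. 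Your scheme can be repaired along these lines, but not as written.

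A secondary point on the stable CLT: the paper works with the Brownian filtration $\F_l^n=\sigma(X_0;B_s,\,s\le kl/n)$, not the observation filtration, and checks the asymptotic orthogonality condition $u_n\sum_l E_{\theta_0}[\widetilde\xi_{l,n}(\theta_0)(B_{k(l+1)/n}-B_{kl/n})\mid\F_l^n]=0$ required by Jacod's theorem for \emph{stable} (rather than merely distributional) convergence. Your Lindeberg-plus-quadratic-variation argument gives a CLT, but stability---which you correctly identify as essential downstream---needs this extra ingredient.
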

\begin{proof}
  Comparing \eqref{E:def_Lronde} with the definition of
  $\xi_{l,n}(\theta)$ above and using a Taylor expansion for
  $\xi_{l,n}(\theta)$ around $\theta_0$, we get the equation
  \eqref{E:term_LAMN} with a remainder term $R_n=R_n^{(1)}+R_n^{(2)}$
  satisfying:
\begin{align}\label{E:resteRn1}
  R_n^{(1)} &= \sum_{l=0}^{L_n}\int_{\theta_0}^{\theta_0+u_n h}
  [\frac{\dot{a}}{a}(X_\frac{kl}{n},s)-\frac{\dot{a}}{a}(X_\frac{kl}{n},\theta_0)]
  \left\{ \sum_{0\le j ,j' \le k} \frac{U_{j,l}
      \widetilde{K}^{-1}_{j,j'} U_{j',l}} { a(X_\frac{kl}{n},s)^2} -
    (k+1) \right\} \dd s, \\ \label{E:resteRn2} \abs{R_n^{(2)}} &\le c
  \sum_{l=0}^{L_n} u_n^{2+\gamma} \left\{ \sum_{0\le j ,j' \le k}
    \abs{U_{j,l} \widetilde{K}^{-1}_{j,j'} U_{j',l}} \right\}
\end{align}
(for $R_n^{(2)}$ we have used that $\theta \mapsto \dot{a}(x,\theta)$
is $\gamma$-H\"{o}lder continuous). To complete the proof, we
repeatedly use the following classical convergence result about
triangular arrays of random variables.
\begin{lem}[Genon-Catalot and Jacod \cite{VGC}, Lemma 9]\label{lemma:9} Let $(\chi^n_l)_{0\leq l\leq L_n}$, $U$ be random variables, with
  $\chi^n_l$ being ${\cal F}^n_{l+1}$-measurable. The two following
  conditions imply
  $\sum_{l=0}^{L_n}\chi^n_l\stackrel{\PP}{\rightarrow} U$:
\begin{eqnarray*}
\sum_{l=0}^{L_n} E\left[\chi^n_l|{\cal F}^n_{l}\right]
\stackrel{\PP}{\rightarrow}U \qquad\mbox{and}\qquad
\sum_{l=0}^{L_n} E\left[(\chi^n_l)^2|{\cal F}^n_{l}\right]
\stackrel{\PP}{\rightarrow}0.
\end{eqnarray*}
\end{lem}

$\bullet$ \underline{We first focus on $N_n^\aug$.} Let us introduce
the sigma field $\F^n_l= \sigma(X_0;B_s, s \le \frac{kl}{n})$ for
$l=0,\dots,L_n$ and $\F^n_{L_n+1}= \sigma(X_0;B_s, s \le 1)$. Then the
variable $\xi_{l,n}(\theta_0)$ is $\F^n_{l+1}$--measurable and the
asymptotic behavior of $N_n^\aug$ will follow from Lemma
\ref{lemma:9}.  To make clearer this point we introduce the following
approximation based on conditionally Gaussian variables:
\begin{equation}\label{E:defxitilde}
\widetilde{\xi}_{l,n}(\theta)=
\frac{\dot{a}}{a}(X_\frac{kl}{n},\theta_0) \left\{
a(X_\frac{kl}{n},\theta)^{-2} \sum_{0\le j ,j'  \le k} \widetilde{U}_{j,l}
\widetilde{K}^{-1}_{j,j'}  \widetilde{U}_{j',l} - (k+1)
\right\}.
\end{equation}
Here, $\widetilde{U}_{j,l}$ is the Gaussian approximation under
$\PP^\theta$ of $U_{j,l}$ corresponding on the block $\Block_l$ to the
variables \eqref{E:defUtilde0}--\eqref{E:defUtildek} on the block
$\Block_0$:
\begin{align*}
  \widetilde{U}_{0,l}&:=a(X_\frac{kl}{n},\theta) n^{\frac{1}{2}}
  \int_{0}^1 (B_{\frac{kl+s}{n}}-B_{\frac{kl}{n}}) \dd \mu(s),
  \\
  \widetilde{U}_{j,l}&:=a(X_\frac{kl}{n},\theta) n^{\frac{1}{2}}
  \int_{0}^1 (B_{\frac{kl+j+s}{n}}-B_{\frac{kl+j-1+s}{n}}) \dd
  \mu(s)\quad \text{ for $j=1,\dots,k-1$,} \\
  \widetilde{U}_{k,l}&:=a(X_\frac{kl}{n},\theta) n^{\frac{1}{2}}
  \int_0^1 (B_{\frac{k(l+1)}{n}}-B_\frac{kl+k-1+s}{n} ) \dd \mu(s).
\end{align*}
Observe that this vector $(\widetilde{U}_{j,l})_{j=0,\dots,k}$ has,
under $\PP^\theta$ and conditionally to $X_\frac{kl}{n}=x_0$, the
same law as the vector $(\widetilde{U}_{j}^\theta)_{j=0,\dots,k}$
defined in Section \ref{S:A_Gaussian_approximation}.  Thus its
conditional law is Gaussian with covariance matrix
$a(X_{\frac{kl}{n}},\theta)^{2}\widetilde{K}$.  Hence, the variable
$\widetilde{\xi}_{l,n}(\theta_0)$ is $\F^n_{l+1}$--measurable and
under $\PP^{\theta_0}$, it is conditionally (to $X_\frac{kl}{n}$)
distributed as a recentered $\chi^2(k+1)$ variable.  Thus we deduce
the following four properties:
\begin{enumerate}
\item [1)] $u_n \sum_{l=0}^{L_n} E_{\theta_0}\left[
    \widetilde{\xi}_{l,n}(\theta_0) \mid \F_l^n \right] =0 $;
\item [2)] Using $u_n^2=1/n$, $L_n \sim n/k_n \to \infty$ and $k_n\to
  \infty$, one has
  \begin{align}\nonumber
    u^2_n \sum_{l=0}^{L_n} E_{\theta_0}\left[
      (\widetilde{\xi}_{l,n}(\theta_0))^2 \mid \F_l^n \right] & =u^2_n
    \sum_{l=0}^{L_n} 2(k_{n}+1)
    \left(\frac{\dot{a}}{a}\right)^2(X_\frac{kl}{n},\theta_0) \\
\label{E:termePalNnsur}
&\hspace{-1.5cm}=\frac{2(k_{n}+1) }{k_n} \int_0^1
\left(\frac{\dot{a}}{a}\right)^2(X_s,\theta_0) \dd s+
o_{\mathbb{P}^{\theta_0}}(1)
\\
\nonumber &\xrightarrow{\mathbb{P}^{\theta_0} }2 \int_0^1
\left(\frac{\dot{a}}{a}\right)^2(X_s,\theta_0) \dd s =
\mathcal{I}_{\theta_0};
  \end{align}
\item [3)] $ u^4_n \sum_{l=0}^{L_n} E_{\theta_0}\left[
    \abs{\widetilde{\xi}_{l,n}(\theta_0)}^4 \mid \F_l^n \right] \le c
  n^{-2} L_n k_n^4 \le c n^{-1} k_n^3 \to 0, $ if $k_n$ goes to
  $\infty$ slowly enough;
\item [4)] $ u_n \sum_{l=0}^{L_n} E_{\theta_0}\left[
    \widetilde{\xi}_{l,n}(\theta_0)
    [B_{\frac{(k+1)l}{n}}-B_{\frac{kl}{n}}] \mid \F_l^n \right] =0$.
\end{enumerate}
>From these four properties, it follows (see Jacod \cite{Jacod}) that $
u_n \sum_{l=0}^{L_n} \widetilde{\xi}_{l,n}(\theta_0) $ converges
stably under $\PP^{\theta_0}$ to a mixed Gaussian variable as in the
statement of the proposition. To obtain the limit for $N_n^\aug$, it
is sufficient to prove that
\begin{equation}
  \label{eq:manu3}
  N_n^\aug- u_n \sum_{l=0}^{L_n}
\widetilde{\xi}_{l,n}(\theta_0)\xrightarrow{\mathbb{P}^{\theta_0} }0.
\end{equation}
Due to Lemma \ref{lemma:9} a sufficient condition consists in the two
following points:
\begin{enumerate}
\item [1)] $u_n\sum_{l=0}^{L_n} E_{\theta_0}\left[
    \widetilde{\xi}_{l,n}(\theta_0) -\xi_{l,n}(\theta_0) \mid \F_l^n
  \right] \to 0$ in probability;
\item [2)] $u^2_n \sum_{l=0}^{L_n} E_{\theta_0}\left[
    (\widetilde{\xi}_{l,n}(\theta_0) -\xi_{l,n}(\theta_0))^2 \mid
    \F_l^n \right] \to 0$ in probability.
\end{enumerate}
But these two points can be shown using \eqref{E:diffUUtilde1} and
\eqref{E:diffUUtilde4} of Lemma \ref{L:compUUtilde} (for $k_n$ slowly
increasing).

\noindent
$\bullet$ \underline{We now study $I_n^\aug$.} A direct
differentiation of $\xi_{l,n}(\theta)$ (recall \eqref{E:def_xi}) gives
\begin{equation*}
\dot{\xi}_{l,n}(\theta)=\frac{\dot{a}}{a}(X_\frac{kl}{n},\theta_0)
\frac{-2\dot{a}}{a^3}(X_\frac{kl}{n},\theta) \sum_{0\le j,j' \le k}
U_{l,j} \widetilde{K}^{-1}_{j,j'} U_{l,j'}.
\end{equation*}
Then, with a few computations similar to the study of $N^\aug_n$, we
obtain (for appropriate $k_n$):
\begin{enumerate}
\item [1)] $\displaystyle u^2_n \sum_{l=0}^{L_n} E_{\theta_0}\left[
    \dot{\xi}_{l,n}(\theta_0) \mid \F_l^n \right]= u_n^2
  \sum_{l=0}^{L_n} -2 (k_{n,l}+1)
  \frac{\dot{a}^2}{a^2}(X_\frac{kl}{n},\theta_0)
  +O_{\mathbb{P}^{\theta_0}}(\frac{c(k_n)}{\sqrt{n}})$
\begin{align}
  \label{E:termePalInsur} & = -2 \frac{(k_{n}+1) }{k_n} \int_0^1
  \left(\frac{\dot{a}}{a}(X_s,\theta_0) \right)^2 \dd s+
  o_{\mathbb{P}^{\theta_0}}(1) \\ \nonumber &
  \xrightarrow{\mathbb{P}^{\theta_0}} - \mathcal{I}_{\theta_0};
\end{align}
\item [2)] $u^4_n \sum_{l=0}^{L_n} E_{\theta_0}\left[
    [\dot{\xi}_{l,n}(\theta_0)]^2 \mid \F_l^n \right] \le c n^{-1}
  k_n^4 \to 0$.
\end{enumerate}
Combined with Lemma \ref{lemma:9}, these two convergences imply that
of $I_n^\aug$ to $ \mathcal{I}_{\theta_0}$ under $\PP^{\theta_0}$.\\
$\bullet$ \underline{The remainder term $R_n$.}  Firstly
a direct use of \eqref{E:diffUUtilde1} gives $E(\abs{R^{(2)}_n})\le
c(k_n) n^{-\gamma/2} \to 0$ if $k_n$ slowly goes to $\infty$.
Secondly the convergence to zero of
$R^{(1)}_n=\sum_{l=0}^{L_n}R^{(1)}_{n,l}$ is more delicate and Lemma
\ref{lemma:9} is helpful for this. To this end we evaluate the
conditional expectation of $R^{(1)}_{n,l}$ using
\eqref{E:diffUUtilde4} and the fact the $(\widetilde{U}_{j,l})_j$
have the conditional covariance matrix $a(X_\frac{kl}{n},\theta)^2
\widetilde{K}$:
\begin{multline*}
  E_{\theta_0}[ R^{(1)}_{n,l} \mid \F_l^n]=
  \int_{\theta_0}^{\theta_0+u_n h}
  [\frac{\dot{a}}{a}(X_\frac{kl}{n},s)-\frac{\dot{a}}{a}(X_\frac{kl}{n},\theta_0)]
  \{ \frac{ a(X_\frac{kl}{n},\theta_0)^2}{a(X_\frac{kl}{n},s)^2} - 1
  \}(k_n+1) \dd s \\ + O(n^{-1}u_nc(k_n)).
\end{multline*}
The function $a$ being $\mathcal{C}^{1+\gamma}$ in $\theta$, one gets:
$ \sum_{l=0}^{L_n } \abs{E_{\theta_0}[ R^{(1)}_{n,l} \mid \F_l^n]} \le
c n^{-\gamma/2}+\frac{c(k_n)}{k_n}n^{-1/2}\to 0$ for appropriate
$k_n$. With similar considerations we evaluate the second conditional
moment and obtain $ u^2_n \sum_{l=0}^{L_n } E_{\theta_0}[
(R^{(1)}_n)^2 \mid \F_l^n] \le c(k_n)L_n u_n^{2+2\gamma}
\xrightarrow{n \to \infty} 0.$
\end{proof}

\subsection{Proof of Theorem \ref{T:LAMN_sur_observation}: the negligible terms}\label{S:The_negligible}
It remains to prove that, as announced, there is convergence to zero
of $\sum_{l=0}^{L_n} \eta_l$ with
$\eta_l=\int_{\theta_0}^{\theta_0+u_n h} r_{ X_{\frac{kl}{n}}
}(U_{0,l},\dots,U_{k,l},s) \dd s$. We aim at applying Lemma
\ref{lemma:9} by computing the first two conditional moments of
$\eta_l$ under $\PP^{\theta_0}$. The main difficulty here comes from
the fact that we do not have an explicit expression for
$r_{x_0}((u_j)_j,\theta)$. Indeed by Theorem
\ref{T:approx_score_gauss} we know bounds for the moments
$E^n_{\theta,x_0}\left( \abs{r_{x_0}((U_j)_j,\theta)}^p \right)$
where by $E_{x_0,\theta}^n$ we denote the expectation with respect
to the law of $\xr^{n,\theta}$ solution of \eqref{E:defXronde}. This
is a priori insufficient to compute the conditional moments of
$\eta_l$ under $\mathbb{P}^{\theta_0}$ which involve quantities such
as $E_{\theta_0,x_0}^n\left( \abs{r_{x_0}((U_j)_j,s)}^p \right)$ for
$s\neq \theta_0$. Thus in Lemmas
\ref{L:stabilite_taille}-\ref{L:stabilite_centrage} in the Appendix
we study the transformation of such moments under change of measure.

Firstly, we evaluate the conditional expectation of $\eta_{l}$,
$$E_{\theta_0}\left[ \eta_l \mid \F_l^n
\right]=\int_{\theta_0}^{\theta_0+u_n h}
E_{\theta_0,x}^n[r_x((U_j)_j,s)]_{\mid x=X_{\frac{kl}{n}}}\dd s.
$$
But $\abs{E^n_{\theta_0,x}[r_x((U_j)_j,s)]}\le
\abs{E^n_{s,x}[r_x((U_j)_j,s)]}+\abs{
  E^n_{\theta_0,x}[r_x((U_j)_j,s)]-E^n_{s,x}[r_x((U_j)_j,s)]}$ can be
bounded using \eqref{E:cond_r_centrage}
and Lemma \ref{L:stabilite_centrage} in the Appendix by $c(k)n^{-1}+
\abs{s-\theta_0} E^n_{s,x}[\abs{r_x((U_j)_j,
  s)}^\alpha]^{\frac{1}{\alpha}}$ for some $\alpha \ge 1$.
Then by \eqref{E:cond_r_moment} we deduce $\abs{E\left[ \eta_l \mid
    \F_l^n \right]} \le c(k_n) [ u_n n^{-1} + u_n^2 n^{-1/2} ]$. Finally a
block length $k_n$ slowly increasing guarantees $\sum_{l=0}^{L_n} E_{\theta_0}\left[ \eta_l \mid \F_l^n \right]\xrightarrow{ \mathbb{P}^{\theta_0}} 0.$

Secondly and similarly, owing to Theorem \ref{T:approx_score_gauss} and Lemma
\ref{L:stabilite_taille} in the Appendix, we get $ E\left[ \eta_l^2 \mid
  \F_l^n \right] \le c(k_n) u_n^2 n^{-1}\to 0$. Therefore, by Lemma \ref{lemma:9}, we have proved $\sum_{l=0}^{L_n} \eta_l\xrightarrow{\mathbb{P}^{\theta_0}}0$.  This ends the proof of Theorem \ref{T:LAMN_sur_observation}.

\subsection{Proof of Theorem \ref{T:LAMN_k_fixe}}
%
  The proof is essentially the same as that of Theorem
  \ref{T:LAMN_sur_observation}, the difference in the asymptotic
  information comes from the difference in the limit of the quantities
  \eqref{E:termePalNnsur} and \eqref{E:termePalInsur} when $k$ is fixed.
%
\section{LAMN property for the initial model}\label{S:LAMN_property}
In this Section we are back to the model where the observation is only
$\mathcal{O}^n=(\Xb_j)_{j=0,\dots,n-1}$ and we will prove Theorem
\ref{T:LAMN_observation} by relying on the LAMN property for the
augmented model.

A first intermediate result is that one can approximate the
log-likelihood of the augmented model by a function of the observation
$\mathcal{O}^n$.

\begin{prop}\label{P:pseudo_vraisem}
  There exist random variables $\Gamma_n$ measurable with respect to
  $\mathcal{O}^n$ such that:
  $$
  \ln (Z^{n,\aug}_{\theta_0,\theta_0+u_nh}(\mathcal{O}^{n,\aug}))-
  \Gamma_n \xrightarrow[\mathbb{P}^{\theta_0}]{n \to \infty} 0.
  $$
\end{prop}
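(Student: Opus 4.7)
The plan is to construct $\Gamma_n$ directly as the $\mathcal{O}^n$-measurable analog of the main term in the LAMN expansion of $\log Z^{n,\aug}$ obtained in Section \ref{S:Asymptotic_study}. Combining Proposition \ref{P:termPal} with the negligibility of the $r$-contribution proved in Section \ref{S:The_negligible}, one has
\[
\log Z^{n,\aug}_{\theta_0,\theta_0+u_n h} = h u_n \sum_{l=0}^{L_n} \xi_{l,n}(\theta_0) - \frac{h^2}{2} u_n^2 \sum_{l=0}^{L_n} \dot\xi_{l,n}(\theta_0) + o_{\mathbb{P}^{\theta_0}}(1),
\]
with $\xi_{l,n}$ defined in \eqref{E:def_xi}. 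The inner differences $U_{j,l}=n^{1/2}(\Xb_{kl+j}-\Xb_{kl+j-1})$, $j=1,\ldots,k-1$, belong to $\sigma(\mathcal{O}^n)$; however $\xi_{l,n}$ also involves the hidden value $X_{kl/n}$ through the coefficient $\frac{\dot a}{a^3}(X_{kl/n},\theta_0)$ and through the endpoint variables $U_{0,l}=n^{1/2}(\Xb_{kl}-X_{kl/n})$ and $U_{k,l}=n^{1/2}(X_{k(l+1)/n}-\Xb_{kl+k-1})$.

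I would therefore introduce
\[
\bar\xi_{l,n}(\theta):=\frac{\dot a}{a}(\Xb_{kl},\theta_0)\left\{ a(\Xb_{kl},\theta)^{-2}\sum_{1\le j,j'\le k-1} U_{j,l}(\widetilde K^{\mathrm{mid}})^{-1}_{j,j'}U_{j',l}-(k-1)\right\},
\]
where $\widetilde K^{\mathrm{mid}}$ is the $(k-1)\times(k-1)$ principal submatrix of $\widetilde K$ (an explicit deterministic matrix, equal to the covariance of the inner Gaussian vector $(\widetilde U_{1,l},\ldots,\widetilde U_{k-1,l})/a$), and set
\[
\Gamma_n := h u_n \sum_{l=0}^{L_n}\bar\xi_{l,n}(\theta_0) - \frac{h^2}{2} u_n^2 \sum_{l=0}^{L_n} \dot{\bar\xi}_{l,n}(\theta_0),
\]
which is $\mathcal{O}^n$-measurable by construction. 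The proof that $\log Z^{n,\aug}_{\theta_0,\theta_0+u_n h}-\Gamma_n\to 0$ in $\mathbb{P}^{\theta_0}$-probability then proceeds by applying Lemma \ref{lemma:9} to the block increments $\chi_l := h u_n(\xi_{l,n}(\theta_0)-\bar\xi_{l,n}(\theta_0)) - \tfrac{h^2}{2}u_n^2(\dot\xi_{l,n}(\theta_0)-\dot{\bar\xi}_{l,n}(\theta_0))$, exactly as in the proof of Proposition \ref{P:termPal}. The required bounds on the two conditional moments follow from Lemma \ref{L:compUUtilde} (for the $\mathbf{L}^p$-control of $U_{j,l}-\widetilde U_{j,l}$), the $\mathcal{C}^{1+\gamma}$-regularity of $a$ in $\theta$, and the estimate $|\Xb_{kl}-X_{kl/n}|=O_{\mathbb{P}^{\theta_0}}(n^{-1/2})$ used when replacing the coefficient argument.

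The main obstacle is choosing the reduced matrix and the centering constant correctly. Under the Gaussian approximation of Section \ref{S:A_Gaussian_approximation}, both $\xi_{l,n}(\theta_0)$ and $\bar\xi_{l,n}(\theta_0)$ must reduce to recentered $\chi^2$-variables of $k+1$ and $k-1$ degrees of freedom respectively, so that their conditional means under $\mathbb{P}^{\theta_0}$ cancel to leading order. A naive truncation that simply deletes $U_{0,l}$ and $U_{k,l}$ from the quadratic form while keeping the original matrix $\widetilde K^{-1}$ and the $(k+1)$-centering would leave a per-block bias of order one, producing a total error of order $\sqrt n/k_n$ that does not vanish for slowly growing $k_n$. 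Using the inverse of the inner marginal covariance $\widetilde K^{\mathrm{mid}}$ together with the matching $(k-1)$-centering makes the leading bias cancel exactly, so that the residual discrepancy is controlled by Lemma \ref{L:compUUtilde}-type higher-order terms and is summable to zero once $k_n$ grows slowly enough, as permitted by Theorem \ref{T:LAMN_sur_observation}.
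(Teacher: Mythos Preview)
Your construction of $\Gamma_n$ is essentially the one used in the paper: drop the endpoint variables $U_{0,l},U_{k,l}$, replace the $(k{+}1)\times(k{+}1)$ matrix $\widetilde K$ by the inverse covariance $\widehat K$ of the inner Gaussian subvector, adjust the centering to $k-1$, and substitute an observable neighbour of $X_{kl/n}$ in the coefficient (the paper uses $\Xb_{kl-1}$ rather than your $\Xb_{kl}$, but this is a minor variation). You also correctly identify that, under the Gaussian approximation, the difference of the two quadratic forms is a recentred $\chi^2(2)$, which is exactly the content of Lemma~\ref{L:diff_chi_deux} in the paper and is what makes the variance condition $u_n^2\sum_l E[(\cdot)^2\mid\F^n_l]\le c/k_n\to 0$ go through.

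The genuine gap is in the coefficient replacement step. You invoke only $|\Xb_{kl}-X_{kl/n}|=O_{\mathbb P^{\theta_0}}(n^{-1/2})$, but this is not enough for the first conditional moment in Lemma~\ref{lemma:9}. The term $(k-1)\bigl\{\tfrac{\dot a}{a}(X_{kl/n},\theta_0)-\tfrac{\dot a}{a}(\Xb_{kl},\theta_0)\bigr\}$ contributes, with that bound alone, of order $(k-1)n^{-1/2}$ per block, and $u_n\sum_{l\le L_n}(k-1)n^{-1/2}\asymp (k_n-1)/k_n$ is only bounded, not $o(1)$. The paper flags exactly this point (for its choice $\Xb_{kl-1}$, which is $\F^n_l$-measurable) and resolves it by a \emph{second} application of Lemma~\ref{lemma:9}, now conditioning on $\F^n_{l-1}$ and using $\bigl|E_{\theta_0}[X_{kl/n}-\Xb_{kl-1}\mid\F^n_{l-1}]\bigr|\le cn^{-1}$ together with $\|\Xb_{kl-1}-\Xb_{kl-2}\|_{\mathbf L^p}\le c(p)(k/n)^{1/2}$.

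With your choice $\Xb_{kl}$ (which is \emph{not} $\F^n_l$-measurable) the argument can actually be made to work in a single pass, but you must use the sharper estimate $E_{\theta_0}[\Xb_{kl}-X_{kl/n}\mid\F^n_l]=O(n^{-1})$ (the martingale part of $X_{(kl+s)/n}-X_{kl/n}$ averages to zero under $\F^n_l$, leaving only the drift), not merely the $\mathbf L^p$-bound $O(n^{-1/2})$. Without this refinement the first-moment condition of Lemma~\ref{lemma:9} fails, and your ``summable to zero'' claim in the last paragraph is unjustified as written.
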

\begin{proof}
  We have seen in Section \ref{S:Asymptotic_study} that $\ln
  (Z^{n,\aug}_{\theta_0,\theta_0+u_nh}(\mathcal{O}^{n,\aug}))=
  hN_n^\aug-1/2h^2I_n^\aug+o_{\mathbb{P}^{\theta_0}}(1)$ where the
  quantities $N_n^\aug$ and $I_n^\aug$ were defined in Section
  \ref{S:The_explicit}.

  Thus the proof of the proposition consists in introducing a proper
  modification of these quantities which only depends on the
  observations. We let for $l=0,\dots,k_n$
\begin{equation*}\label{E:defxiObs}
\xi_{l,n}^\obs(\theta)=
\frac{\dot{a}}{a}(\Xb_{kl-1},\theta_0) \left\{
a^{-2}(\Xb_{kl-1},\theta)
\sum_{1 \le j ,j' \le k-1} U_{l,j} \widehat{K}^{-1}_{j,j'} U_{l,j'} - (k-1)
\right\},
\end{equation*}
with the convention $\Xb_{-1}=\xi_0$ is the known initial value of the
diffusion and the matrix $a^2(x_0,\theta)\widehat{K}$ is the
covariance matrix of the conditionally Gaussian vector
$(\widetilde{U}^\theta_1,\quad,\widetilde{U}^\theta_{k-1})$:
\begin{equation*}
\widehat{K}=
\begin{bmatrix}
  v_1+v_2 & c    &0  & 0   \\
  c   & \ddots  & \ddots &0 \\
  0 & \ddots &\ddots &c \\
  0 &0  & c & v_1+v_2 \\
\end{bmatrix}.
\end{equation*}
Clearly, $\xi_{l,n}^\obs(\theta)$ only depends on the observation
$\mathcal{O}^n$ since we have suppressed all occurrences of the
variables $U_{0,l}$ and $U_{k,l}$ and replaced $X_{\frac{kl}{n}}$ by
$\Xb_{kl-1}$ in the expression of $\xi_{l,n}(\theta)$ (compare with
\eqref{E:def_xi}).  Then we let $N^\obs_n=u_n\sum_{l=0}^{L_n}
\xi_{l,n}^\obs(\theta_0)$ and $I^\obs_n=-u^2_n\sum_{l=0}^{L_n}
\frac{\partial \xi_{l,n}^\obs}{\partial \theta}(\theta_0)$.\\
$\bullet$ \underline{Study of $N^\aug_n-N^\obs_n$.}  The first step is
to consider the conditionally recentered chi square approximation of
$\xi_{l,n}^\obs(\theta)$ that we define as:
\begin{equation}\label{E:defxitildeobs}
\hspace{-1mm}\widetilde{\xi}_{l,n}^\obs(\theta)=\frac{\dot{a}}{a}(X_{\frac{kl}{n}},\theta_0)
\left\{
a^{-2}(X_{\frac{kl}{n}},\theta)
\sum_{1 \le j ,j' \le k-1}
\widetilde{U}_{l,j}\widehat{K}^{-1}_{j,j'}
\widetilde{U}_{l,j'} - (k-1)
\right\}.
\end{equation}
The first step is to prove the validity of the approximation:
\begin{equation} \label{E:diffXiXitilde}
u_n \sum_{l=0}^{L_n}
\{\xi^\obs_{l,n}(\theta_0)-\widetilde{\xi}^\obs_{l,n}(\theta_0)\} \xrightarrow{\mathbb{P}^{\theta_0} }0.
\end{equation}
This is done similarly to the proof of
$N^\aug_n-u_n\sum_{l=0}^{L_n}\widetilde{\xi}_{l,n}(\theta_0) \to 0$ in
proposition \ref{P:termPal}, by considering the first two conditional
moments, but here the first moment is more delicate to handle: the
conditional moment $E_{\theta_0}[
\xi^\obs_{l,n}(\theta_0)-\widetilde{\xi}^\obs_{l,n}(\theta_0) \mid
\F^n_l]$ is of the form $(k-1)\{ g( X_{\frac{kl}{n}} )-g( \Xb_{kl-1})
\} h(\Xb_{kl-1})+O(c(k_n)/n)$ for $g$ and $h$ two $\mathcal{C}^2$
functions.
If we abruptly use the relation $\norme{X_{\frac{kl}{n}}
  -\Xb_{kl-1}}_{\mathbf{L}^p} \le c(p)n^{-1/2}$ then we only deduce
that $u_n \sum_{l=0}^{L_n} E_{\theta_0}[
\xi^\obs_{l,n}(\theta_0)-\widetilde{\xi}^\obs_{l,n}(\theta_0) \mid
\F^n_l]$ remains bounded in probability. To show that it actually
converges to zero, we have to apply again Lemma \ref{lemma:9} to the
new triangular array of variables, $u_n \sum_{l=0}^{L_n} (k-1)\{ g(
X_{\frac{kl}{n}} )-g( \Xb_{kl-1}) \} h(\Xb_{kl-1})$.  Then by rather
long computations, using that
$\norme{\Xb_{kl-1}-\Xb_{kl-2}}_{\mathbf{L}^p} \le c(p) (k/n)^{1/2}$
and $\abs{E_{\theta_0}[X_\frac{kl}{n} -\Xb_{kl-1} \mid \F^n_{l-1}]}
\le c n^{-1}$,
we can prove,
\begin{align*}
  u_n \sum_{l=0}^{L_n} (k-1) \abs{E_{\theta_0}[ \{ g( X_{\frac{kl}{n}}
    )-g( \Xb_{kl-1}) \} h(\Xb_{kl-1}) \mid \F^n_{l-1}]} \le c(k)
  n^{-1/2} \xrightarrow{\mathbb{P}^{\theta_0} }0,
  \\
  u_n^2 \sum_{l=0}^{L_n} (k-1)^2 E_{\theta_0}[ \{ g( X_{\frac{kl}{n}}
  )-g( \Xb_{kl-1}) \}^2 h(\Xb_{kl-1})^2 \mid \F^n_{l-1}] \le c(k)
  n^{-1} \xrightarrow{\mathbb{P}^{\theta_0} }0.
\end{align*}
Thus we deduce $u_n \sum_{l=0}^{L_n} E_{\theta_0}[
\xi^\obs_{l,n}(\theta_0)-\widetilde{\xi}^\obs_{l,n}(\theta_0) \mid
\F^n_l] \to 0$.  The second condition $u_n^2 \sum_{l=0}^{L_n}
E_{\theta_0}[
(\xi^\obs_{l,n}(\theta_0)-\widetilde{\xi}^\obs_{l,n}(\theta_0))^2 \mid
\F^n_l] \le c(k)n^{-1} \to 0$ is easily obtained and we deduce
\eqref{E:diffXiXitilde}.

Thus, in view of the equation \eqref{eq:manu3}, it remains to prove that $u_n \sum_{l=0}^{L_n} \{
  \widetilde{\xi}^\obs_{l,n}(\theta_0)-\widetilde{\xi}_{l,n}(\theta_0)
\}$ is negligible. But by Lemma \ref{L:diff_chi_deux} in the
Appendix, comparing expressions \eqref{E:defxitilde} and
\eqref{E:defxitildeobs}, it appears that conditionally to $\F^n_l$ the
random variable
$\widetilde{\xi}^\obs_{l,n}(\theta_0)-\widetilde{\xi}_{l,n}(\theta_0)$
is a recentered $\chi^2(2)$ variable and hence the following
properties hold:
\begin{align*}
  u_n \sum_{l=0}^{L_n} E_{\theta_0}\left(
    \widetilde{\xi}^\obs_{l,n}(\theta_0)-\widetilde{\xi}_{l,n}(\theta_0)
    \mid \F^n_l \right)&=0,
  \\
  u_n^2 \sum_{l=0}^{L_n} E_{\theta_0}\left( \left\{ \widetilde{\xi}^\obs_{l,n}
      (\theta_0)-\widetilde{\xi}_{l,n}(\theta_0) \right\}^2 \mid
    \F^n_l \right)&=\sum_{l=0}^{L_n} u_n^2 4
  \frac{\dot{a}^2}{a^2}(X_{\frac{kl}{n}},\theta_0) \le \frac{c}{k_n}
  \to 0.
\end{align*}
These two properties imply by Lemma \ref{lemma:9} the convergence to 0 under $\PP^{\theta_0}$ of $u_n \sum_{l=0}^{L_n} \left\{
  \widetilde{\xi}^\obs_{l,n}(\theta_0)-\widetilde{\xi}_{l,n}(\theta_0)
\right\}$, and thus $N^\aug_n-N^\obs_n\xrightarrow{\mathbb{P}^{\theta_0} }0$.\\
$\bullet$ \underline{Study of $I^\aug_n-I^\obs_n$.} Exactly as we
proved that $I^\aug_n$ tends to $\mathcal{I}_{\theta_0}$ we can show
that $I^\obs_n \to \mathcal{I}_{\theta_0}$. Thus the difference is
negligible.

Finally the proposition is obtained by setting $\Gamma_n =h
N^\obs_n- h^2/2 I^\obs_n$.
\end{proof}
Then  Theorem \ref{T:LAMN_observation} is a consequence of the
following proposition combined with Proposition
\ref{P:pseudo_vraisem} and Theorem \ref{T:LAMN_sur_observation}.
\begin{prop}
  We have the convergence,
  $$
  Z^n_{\theta_0,\theta_0+u_n h} - e^{\Gamma_n}
  \xrightarrow[\mathbb{P}^{\theta_0}]{n \to \infty}0.
  $$
\end{prop}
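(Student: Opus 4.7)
The key observation is that $\sigma(\mathcal{O}^n) \subset \sigma(\mathcal{O}^{n,\aug})$, so the tower property for Radon--Nikodym derivatives yields $Z^n_{\theta_0,\theta_0+u_n h} = E_{\theta_0}\bigl[Z^{n,\aug}_{\theta_0,\theta_0+u_n h}\mid \mathcal{O}^n\bigr]$. Because $\Gamma_n$ is $\mathcal{O}^n$-measurable, conditional Jensen then gives
\begin{equation*}
E_{\theta_0}\bigl[\abs{Z^n - e^{\Gamma_n}}\bigr] \le E_{\theta_0}\bigl[\abs{Z^{n,\aug} - e^{\Gamma_n}}\bigr],
\end{equation*}
so the proposition reduces to showing $Z^{n,\aug} \to e^{\Gamma_n}$ in $\mathbf{L}^1(\mathbb{P}^{\theta_0})$.

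Proposition \ref{P:pseudo_vraisem} already gives convergence in $\mathbb{P}^{\theta_0}$-probability; I would upgrade this to $\mathbf{L}^1$ via Vitali, i.e.\ by establishing uniform integrability of $\{Z^{n,\aug}-e^{\Gamma_n}\}$. Uniform integrability of $\{Z^{n,\aug}\}$ is essentially Scheff\'e's lemma: it is nonnegative with $E_{\theta_0}[Z^{n,\aug}]=1$ for every $n$, it converges in law to $Z^\infty=\exp(hN\sqrt{\mathcal{I}_{\theta_0}}-\tfrac{h^2}{2}\mathcal{I}_{\theta_0})$ by Theorem \ref{T:LAMN_sur_observation}, and conditioning on $\mathcal{I}_{\theta_0}$ shows $E[Z^\infty]=1$; equality of the limiting means with weak convergence of nonnegative variables forces uniform integrability.

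The main obstacle is handling $\{e^{\Gamma_n}\}$, which is not itself a likelihood ratio. My plan is to decompose on the event $B_n:=\{Z^{n,\aug}/e^{\Gamma_n}\ge 1/2\}$, where $e^{\Gamma_n}\le 2Z^{n,\aug}$; by Proposition \ref{P:pseudo_vraisem}, $\mathbb{P}^{\theta_0}(B_n^c) \to 0$, and on $B_n$ uniform integrability is inherited from that of $\{Z^{n,\aug}\}$. To control the residual piece $e^{\Gamma_n}\mathbf{1}_{B_n^c}$, I would exploit the explicit block-additive form $\Gamma_n=hN_n^\obs - \tfrac{h^2}{2}I_n^\obs$ induced by \eqref{E:defxiObs}: each block contribution $u_n\xi_{l,n}^\obs(\theta_0)$ is of small order and conditionally a recentered $\chi^2$-type variable with bounded coefficients, so classical exponential moment estimates for sums of such variables should yield $\sup_n E_{\theta_0}[e^{p\Gamma_n}]<\infty$ for some $p>1$, whence $E_{\theta_0}[e^{\Gamma_n}\mathbf{1}_{B_n^c}]\to 0$ by H\"older's inequality.

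Once both $\{Z^{n,\aug}\}$ and $\{e^{\Gamma_n}\}$ are uniformly integrable, so is $\{\abs{Z^{n,\aug}-e^{\Gamma_n}}\}$; the probability convergence from Proposition \ref{P:pseudo_vraisem} then upgrades to $\mathbf{L}^1$ by Vitali, and the conditional Jensen inequality above delivers the stated $\mathbb{P}^{\theta_0}$-convergence of $Z^n_{\theta_0,\theta_0+u_n h}-e^{\Gamma_n}$ to zero.
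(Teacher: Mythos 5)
Your reduction via $Z^n_{\theta_0,\theta_0+u_n h}=E_{\theta_0}[Z^{n,\aug}_{\theta_0,\theta_0+u_n h}\mid\mathcal{O}^n]$ and conditional Jensen is correct, and the Scheff\'e-type argument for uniform integrability of $\{Z^{n,\aug}\}$ (nonnegative, mean $1$ since the densities are everywhere positive, converging in law to a limit of mean $1$) is sound. The genuine gap is the claim that $\sup_n E_{\theta_0}[e^{p\Gamma_n}]<\infty$ for some $p>1$. Nothing in the paper delivers exponential moments for the block quadratic forms $\sum_{j,j'}U_{j,l}\widehat{K}^{-1}_{j,j'}U_{j',l}$: all the available estimates (Lemma \ref{L:compUUtilde}, Proposition \ref{P:inversibleMalliaB}, Corollary \ref{C:majdotpp}) are polynomial $\mathbf{L}^p$ bounds, and finiteness of all polynomial moments does not give finite exponential moments. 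The increments $U_{j,l}$ are not Gaussian; to make your "classical exponential moment estimates" work you would need joint sub-Gaussian tail control of each whole block vector, iterated through the Markov chain of blocks via the conditional densities, and then a check that the $\chi^2$-type exponential moments $E[\exp(c\,u_n\,\chi^2)]$ stay bounded after taking the product over $L_n\sim n/k_n$ blocks. This is a substantial piece of analysis that "should yield" does not supply, and it is the load-bearing step of your argument, since without it the residual piece $e^{\Gamma_n}\mathbf{1}_{B_n^c}$ is uncontrolled.

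The paper avoids this entirely by not aiming for $\mathbf{L}^1$ convergence. Writing $Z^{n,\aug}=e^{\Gamma_n}e^{\ve_n}$ with $\ve_n\to0$ in probability and using the elementary bound $\abs{e^u-1}\le(\abs{u}\wedge1)(e^u+1)$, it splits $\abs{Z^n-e^{\Gamma_n}}\le\alpha_n+\beta_n$ where $\alpha_n=E_{\theta_0}[\abs{\ve_n}\wedge1\mid\mathcal{O}^n]\,e^{\Gamma_n}$ and $\beta_n=E_{\theta_0}[(\abs{\ve_n}\wedge1)Z^{n,\aug}\mid\mathcal{O}^n]$. The first term needs only tightness of $(e^{\Gamma_n})_n$ (free, since $\Gamma_n$ converges in law) together with $E_{\theta_0}[\abs{\ve_n}\wedge1]\to0$; the second is handled by the change of measure $E_{\theta_0}[\beta_n]=E_{\theta_0+u_nh}[\abs{\ve_n}\wedge1]$ and contiguity of $\mathbb{P}^{\theta_0+u_nh}$ with respect to $\mathbb{P}^{\theta_0}$ on $\sigma(\mathcal{O}^{n,\aug})$, which is itself a soft consequence of the LAMN property already proved for the augmented model. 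In effect, contiguity plays the role that uniform integrability plays in your argument, and tightness replaces the exponential moment bound; if you want to salvage your route, the cleanest fix is to replace the moment bound on $e^{\Gamma_n}$ by exactly this truncation-plus-contiguity device.
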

\begin{proof}
  The starting point is the relation between the likelihood of the
  initial and of the augmented model: $Z^n_{\theta_0,\theta_0+u_n
    h}=E_{\theta_0}\left[ Z^{n,\aug}_{\theta_0,\theta_0+u_n h} \mid
    \mathcal{O}^n \right]$.  By Proposition \ref{P:pseudo_vraisem} we
  can write $Z^{n,\aug}_{\theta_0,\theta_0+u_n
    h}=e^{\Gamma_n}e^{\ve_n}$ where $\ve_n$ tends to zero in
  $\mathbb{P}_{\theta_0}$ probability. Using that $\Gamma_n$ is
  $\mathcal{O}^n$ measurable we deduce,
  $$
  Z^n_{\theta_0,\theta_0+u_n h}-e^{\Gamma_n}=
  E_{\theta_0}\left[e^{\Gamma_n} ( e^{\ve_n} -1) \mid \mathcal{O}^n
  \right].
  $$
  We now use the inequality $\abs{e^u-1}\le (\abs{u} \wedge 1)
  (e^u+1)$ to obtain that $\abs{Z^n_{\theta_0,\theta_0+u_n
      h}-e^{\Gamma_n}} \le \alpha_n+\beta_n$ with:
\begin{align*}
  \alpha_n&=E_{\theta_0}\left[(\abs{\ve_n}\wedge 1) e^{\Gamma_n} \mid
    \mathcal{O}^n \right] = E_{\theta_0}\left[ \abs{\ve_n}\wedge 1
    \mid \mathcal{O}^n \right] e^{\Gamma_n},
  \\
  \beta_n&=E_{\theta_0}\left[(\abs{\ve_n}\wedge 1) e^{\Gamma_n}
    e^{\ve_n} \mid \mathcal{O}^n \right] = E_{\theta_0 }\left[
    (\abs{\ve_n}\wedge 1) Z^{n,\aug}_{\theta_0,\theta_0+u_n h} \mid
    \mathcal{O}^n \right].
\end{align*}
It now remains to show the convergence to zero of $\alpha_n$ and
$\beta_n$.

For $\alpha_n$, let us notice that $(e^{\Gamma_n})_n$ is a tight
sequence and that $E_{\theta_0}\left[\abs{\ve_n}\wedge 1 \mid
  \mathcal{O}^n \right]$ converges in
$\mathbf{L}^1(\mathbb{P}_{\theta_0})$ norm to zero since,
$$
E_{\theta_0}\left[ E_{\theta_0}\left[\abs{\ve_n}\wedge 1 \mid
    \mathcal{O}^n \right] \right] =E_{\theta_0} [\abs{\ve_n}\wedge 1]
\xrightarrow{n \to \infty} 0.
$$

For $\beta_n$, we have
$E_{\theta_0}[\beta_n]=E_{\theta_0+u_nh}[\abs{\ve_n}\wedge 1]$.  But
the sequence of probabilities $\mathbb{P}^{\theta_0}$ and
$\mathbb{P}^{\theta_0+u_nh}$ restricted to the sigma fields
$\mathcal{O}^{n,\aug}$ are contiguous (this is a consequence of the
LAMN property for the augmented model, see e.g. Proposition 1 in
Jeganathan \cite{Jeganathan1}); hence the sequence $(\ve_n)_n$ which
is measurable with respect to $\mathcal{O}^{n,\aug}$ and converges
to zero in $\mathbb{P}^{\theta_0}$--probability converges also in
$\mathbb{P}^{\theta_0+u_nh}$--probability. This implies $
E_{\theta_0}[\beta_n]= E_{\theta_0+u_nh}[\abs{\ve_n}\wedge 1] \to
0$.
\end{proof}

\section{Appendix}
\subsection{Proof of results of Section \ref{S:The_density}}\label{S:Proof_of}
Since the results of Section \ref{S:The_density} concern only the
study of a density for fixed values of $\theta$, we omit the
dependence upon $\theta$ in our notations. We will prove the results
in the following order.  First in section \ref{Ss:Existence_of}, we
show that the law of the Wiener functional $(U^n,V^n)=\left( \int_0^1
  \xr_s^{n} \dd \mu(s),\xr_1^{n} \right)$
admits a density.  Then we prove the lower and upper bounds given in
Theorem \ref{T:densiteUV} (section \ref{Ss:Lower_bound}) and
eventually we deduce the Proposition \ref{P:inversibleMalliaB}
(section \ref{S:Proof_of_Proposition}).

\subsubsection{Existence of the density $\pUV^n_{x_0}$}
\label{Ss:Existence_of}
We know \cite{Nualart1} that under \ref{regulariteR} the random
variable $\xr^n_t$ is an element of $\D^{3,\infty}$ and its first
derivative is equal to
\begin{equation}\label{E:formulDX}
D_t \xr^{n}_s=\ind{t \le s} \yr^n_s (\yr^n_t)^{-1}a_n(\xr^n_t),
\end{equation}
where $\yr^n$ is the solution of
\begin{equation}
\dd \yr^n_t=a'_n(\xr^n_t) \yr^n_t \dd W_t + b_n'(\xr^n_t) \yr^n_t \dd t,\quad \yr^n_0=1.
\end{equation}
In the sequel we will repeatedly use the positivity of $\yr^n$ and the control
\begin{equation}\label{E:borneY}
E(\sup_{t \in [0,1]} (\yr^n_t)^p)+
E(\sup_{t \in [0,1]} (\yr^n_t)^{-p}) \le c(p).
\end{equation}
>From this we can see that the random variables $U^n$ and $V^n$ are
elements of $\D^{3,\infty}$ and using \eqref{E:formulDX} with the
linearity of the operator $D$, we have
\begin{align*}
  D_tU^n&=\int_0^{1} \ind{t \le s } \yr^n_s (\yr^n_t)^{-1}
  a_n(\xr^n_t)\dd \mu(s)= a_n(\xr^n_t) (\yr^n_t)^{-1} \ind{t \le 1} \int_{[t,1]} \yr^n_s
  \dd \mu(s),
  \\
  D_t V^n&=a_n(\xr^n_t) \yr_1^{n} (\yr_t^n)^{-1}\ind{t \le 1}.
\end{align*}
Using Theorem 2.1.2 p. 86 in \cite{Nualart1}, a sufficient condition
for the existence of a density for $(U^n,V^n)$ is that its Malliavin
covariance matrix $\gamma_{U^n,V^n}$ satisfies a non degeneracy
condition given, for instance, by the following lemma.
\begin{lem}\label{L:mino_mallia_taille2}
  $\gamma_{U^n,V^n}$ is an $a.s$ invertible matrix and for all $p\geq
  1$, we have
  $$
  E\left( \abs{\det(\gamma_{U^n,V^n})}^{-p} \right) \le c(p).
  $$
\end{lem}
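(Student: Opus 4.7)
The plan is to combine the Lagrange identity for $2\times 2$ Gram matrices with the explicit formulas for $D_tU^n$ and $D_tV^n$ given just above, and to isolate a deterministic positive constant coming from the hypothesis $\mu((0,1))>0$.

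\textbf{Step 1: Lagrange identity.} For the two-dimensional Malliavin matrix one has the standard identity
\[
\det(\gamma_{U^n,V^n})=\tfrac12\int_0^1\!\!\int_0^1\bigl(D_tU^n\,D_sV^n-D_sU^n\,D_tV^n\bigr)^2\,ds\,dt .
\]

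\textbf{Step 2: Cancellation.} Plugging in the formulas
$D_tU^n=a_n(\mathcal{X}^n_t)(\mathcal{Y}^n_t)^{-1}\int_{[t,1]}\mathcal{Y}^n_r\,d\mu(r)$ and $D_tV^n=a_n(\mathcal{X}^n_t)\mathcal{Y}^n_1(\mathcal{Y}^n_t)^{-1}$ for $t\le 1$, the prefactor $a_n(\mathcal{X}^n_t)a_n(\mathcal{X}^n_s)\mathcal{Y}^n_1/(\mathcal{Y}^n_t\mathcal{Y}^n_s)$ factors out and the remaining bracket collapses to
\[
\int_{[t,1]}\mathcal{Y}^n_r\,d\mu(r)-\int_{[s,1]}\mathcal{Y}^n_r\,d\mu(r)
=\pm\int_{[s\wedge t,\,s\vee t)}\mathcal{Y}^n_r\,d\mu(r).
\]
Thus
\[
\det(\gamma_{U^n,V^n})
=\tfrac12\int_0^1\!\!\int_0^1
\frac{a_n(\mathcal{X}^n_t)^2a_n(\mathcal{X}^n_s)^2(\mathcal{Y}^n_1)^2}{(\mathcal{Y}^n_t)^2(\mathcal{Y}^n_s)^2}
\Bigl(\!\int_{[s\wedge t,\,s\vee t)}\!\mathcal{Y}^n_r\,d\mu(r)\Bigr)^2\,ds\,dt .
\]

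\textbf{Step 3: Lower bound.} Using $a_n\ge\underline{a}>0$ from assumption (\textbf{R}), and the bound $\int_{[s\wedge t,\,s\vee t)}\mathcal{Y}^n_r\,d\mu(r)\ge(\inf_{[0,1]}\mathcal{Y}^n)\,\mu([s\wedge t,\,s\vee t))$, one obtains
\[
\det(\gamma_{U^n,V^n})\ \ge\ \frac{\underline{a}^4\,(\mathcal{Y}^n_1)^2\,(\inf_{[0,1]}\mathcal{Y}^n)^2}{2\,(\sup_{[0,1]}\mathcal{Y}^n)^4}\cdot M,
\qquad M:=\int_0^1\!\!\int_0^1\mu([s\wedge t,\,s\vee t))^2\,ds\,dt .
\]

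\textbf{Step 4: $M>0$ from the hypothesis on $\mu$.} Because $\mu((0,1))>0$ and the set of continuity points of the distribution function of $\mu$ is dense, one can pick continuity points $0<\alpha<\beta<1$ with $\mu([\alpha,\beta])>0$; then $\mu([\alpha,\beta))=\mu([\alpha,\beta])>0$. For every $(s,t)$ in the rectangle $[0,\alpha]\times[\beta,1]$ of positive area, $[s,t)\supset[\alpha,\beta)$, so $\mu([s\wedge t,s\vee t))\ge \mu([\alpha,\beta))$. Consequently $M>0$ is a deterministic constant depending only on $\mu$.

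\textbf{Step 5: Negative moments.} Taking $-p$-th powers and using Cauchy--Schwarz,
\[
E\bigl[\det(\gamma_{U^n,V^n})^{-p}\bigr]
\ \le\ C(p)\,M^{-p}\,E\bigl[(\mathcal{Y}^n_1)^{-4p}\bigr]^{1/2}\,E\bigl[(\inf_{[0,1]}\mathcal{Y}^n)^{-8p}(\sup_{[0,1]}\mathcal{Y}^n)^{8p}\bigr]^{1/2},
\]
and all factors on the right are bounded by $c(p)$ thanks to \eqref{E:borneY} (applied to $\mathcal{Y}^n$ and $(\mathcal{Y}^n)^{-1}$, whose moments are uniformly controlled since the coefficients $a'_n$, $b'_n$ of the SDE for $\mathcal{Y}^n$ are bounded uniformly in $n$).

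The only genuine ingredient beyond standard diffusion estimates is the algebraic cancellation in Step 2 that turns the determinant into something proportional to the $\mu$-measure of an interval; the rest is bookkeeping. The step I would expect to require the most care is Step 4, where one must verify from the minimal assumption $\mu((0,1))>0$ alone that $M$ is strictly positive; the choice of continuity points sidesteps any issue with atoms of $\mu$ lying at the endpoints.
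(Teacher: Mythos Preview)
Your proof is correct and takes a cleaner route than the paper. The paper computes the three entries of $\gamma_{U^n,V^n}$ separately, introduces an auxiliary probability density $m^n_t\propto a_n^2(\mathcal{X}^n_t)(\mathcal{Y}^n_t)^{-2}$ on $[0,1]$, and interprets the determinant as $\psH{V^n}{V^n}^2(\mathcal{Y}^n_1)^{-2}$ times the variance of $f^n(t)=\int_{[t,1]}\mathcal{Y}^n_s\,d\mu(s)$ under $m^n_t\,dt$; this variance is then bounded below via the inequality $x^2+y^2\ge(x-y)^2/2$ applied after splitting $[0,1]$ at its midpoint, leading to the deterministic factor $\int_0^{1/2}\mu([t,t+1/2))^2\,dt$. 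Your Lagrange identity collapses these two steps into one and lands directly on the double integral $M=\int_0^1\int_0^1\mu([s\wedge t,s\vee t))^2\,ds\,dt$; since the variance representation can itself be rewritten as $\frac{1}{2}\int\int m^n_t m^n_s(f^n(t)-f^n(s))^2\,dt\,ds$, the two arguments are secretly the same computation, but your presentation avoids the detour through the probability density and the midpoint trick. Both reduce the problem to checking that a deterministic $\mu$-integral is positive under the hypothesis $\mu((0,1))>0$, and both then conclude via the moment bounds \eqref{E:borneY}. One cosmetic remark: the exponents in your Step~5 Cauchy--Schwarz display do not quite match the bound from Step~3 (one expects $-4p$ rather than $-8p$ on $\inf\mathcal{Y}^n$), but since \eqref{E:borneY} controls all moments this is immaterial.
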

\begin{proof}
  To have shorter notations, during the proof we will denote by $c_*$
  any generic positive random variable which satisfies $E(c_*^{-p})
  \le c(p)$.
  By direct computations we have,
\begin{align}\label{E:croUU}
  \psH{U^n}{U^n}&= \int_0^{1} a_n^2(\xr^n_t)(\yr^n_t)^{-2} \left(
    \int_{[t,1]} \yr^n_s \dd \mu(s) \right)^2 \dd t, \\\label{E:croUV}
  \psH{U^n}{V^n}&= \int_0^{1} a_n^2(\xr^n_t)(\yr^n_t)^{-2} \left(
    \int_{[t,1]} \yr^n_s \dd \mu(s) \right) \dd t \: \yr^n_1,
  \\\label{E:croVV} \psH{V^n}{V^n}&= \int_0^{1}
  a_n^2(\xr^n_t)(\yr^n_t)^{-2} \dd t \: (\yr^n_{1})^2.
\end{align}
Now, define the probability density on $[0,1]$
\begin{equation}\label{E:def_m}
m^n_t= a_n^2(\xr^n_t)(\yr^n_t)^{-2}
\left(\int_0^1 a_n^2(\xr^n_s)(\yr^n_s)^{-2} \dd s \right)^{-1}
,
\end{equation}
and set $f^n(t):=\int_{[t,1]} \yr^n_s \dd \mu(s)$. Thus we can
write:
\begin{equation*}
\det(\gamma_{U^n,V^n}) = \psH{V^n}{V^n}^2 (\yr_1^n)^{-2}
\left[ \int_0^{1} m^n_t f^n(t)^2 \dd t
-
\left( \int_0^{1} m^n_s  f^n(s) \dd s \right)^2 \right].
\end{equation*}
Hence the above bracket can be interpreted as the variance of the function
$f^n(t)$ under the probability measure $m^n_t \dd t$ and hence:
\begin{equation*}
\det(\gamma_{U^n,V^n}) = \psH{V^n}{V^n}^2 (\yr^n_1)^{-2}
\int_0^{1} m^n_t \left[
f^n(t)-
\left(
\int_0^{1} m^n_r  f^n(r) \dd r
\right)\right]^2 \dd t.
\end{equation*}
But clearly under Assumption \ref{regulariteRprime}, $\psH{V^n}{V^n}^2
\ge \underline{a}^2 \inf_{t \in [0,1]} (\yr^n_t)^{-2} \inf_{t \in
  [0,1]} (\yr^n_t)^{2}$ and hence by \eqref{E:borneY} this yields, $
\psH{V^n}{V^n}^2 \ge c_*, $ using our convention about
generic positive random variables $c_*$.  Similarly, by \eqref{E:def_m}, we have
$m^n_t \ge c_*$ and thus,
$$
\det(\gamma_{U^n,V^n}) \ge c_* \int_0^{1} \left[ f^n(t)- \left(
    \int_0^1 m^n_r f^n(r) \dd r \right)\right]^2 \dd t.
$$
Then, writing the integral above as
$$
\int_0^{1/2} \left[ f^n(t)- \left( \int_0^{1} m^n_r f^n(r) \dd r
  \right)\right]^2 + \left[ f^n(t+1/2)- \left( \int_0^1 m^n_r f^n(r)
    \dd r \right)\right]^2 \dd t,$$
and using the simple inequality
$x^2+y^2\ge (x-y)^2 /2$, we get: $\det(\gamma_{U^n,V^n}) \ge c_* \int_0^{1/2} \left( \int_{[t,t+\frac{1}{2})}
    \yr^n_s \dd \mu(s) \right)^2 \dd t .$
Using again $\inf_{s \in [0,1]} \yr^n_s \ge c_*$, we obtain:\break
$ \det(\gamma_{U^n,V^n}) \ge c_* \int_0^{1/2} \mu\left( [t,t+1/2)
\right)^2 \dd t.  $
But this integral is positive as soon as $\mu\left((0,1)\right)>0$ which is the case by assumption \eqref{E:hypomu}. Thus the lemma is proved.
\end{proof}

\subsubsection{Bounds for the density}\label{Ss:Lower_bound}
For the proof of \eqref{E:maj_min_densite}, we make a crucial use of
the fact that the diffusion process $\xr^n$ is one dimensional by
introducing the classical transformation:
$$
s_n(x):=\int_0^x a_n^{-1}(y) \dd y, \quad \W^n_t:=s_n(\xr^n_t).
$$
By the assumptions on $a$, the function $s_n$ is one to one on $\R$
and the derivatives of $s_n$ and $s_n^{-1}$ are bounded
independently of $n$.  By It\^o's formula, $\W^n$ solves the equation $\dd
\W^n_t=\dd W_t + \tilde{b}_n(\W^n_t) \dd t$ where
$\tilde{b}_n(w):=\frac{b_n}{a_n}\circ s^{-1}_n(w)-\frac{1}{2}
a'_n\circ s^{-1}_n(w)$ and the initial value is
$\W^n_0=s_n(\xr^n_0)=0$.  We let $\tilde{P}$ be the probability
defined on $(\Omega,\mathcal{A})$ by
\begin{equation*}
\frac{\dd \tilde{P}}{\dd P}=\exp
\left(
-\int_0^1 \tilde{b}_n(\W^n_u) \dd W_u -
\frac{1}{2}\int_0^1
\tilde{b}_n^2(\W^n_u) \dd u
\right).
\end{equation*}
The Girsanov theorem implies that the process $\W^n$ is under
$\tilde{P}$ a standard Brownian motion. Note that the random
variables $(U^n,V^n)$ have the following expressions with respect to
this $\tilde{P}$--Brownian motion:
\begin{align}\label{E:def_Ungauss}
  U^n&=\int_0^{1} s^{-1}_n (\W^n_r) \dd \mu(r), \\
  \label{E:def_Vngauss} V^n&=s^{-1}_n(\W^n_1).
\end{align}
Now let $h_0$ and $h_1$ be non negative real functions, then:
\begin{equation}\label{E:girsanovPPt}
E_P \left[ h_0(U^n)h_1(V^n)\right]=
E_{\tilde{P}} \left[ h_0(U^n)h_1(V^n) L^n \right],
\end{equation}
where $L^n=\exp \left( \int_0^1 \tilde{b}_n(\W^n_r) \dd \W^n_r
  -\frac{1}{2} \int_0^1 \tilde{b}_n^2(\W^n_r) \dd r \right)$.  But
using It\^o's formula, $L^n=\exp \left( \tilde{B}_n(\W_1^n)
  -\frac{1}{2}\int_0^1 (\tilde{b}_n^2+\tilde{b}_n')(\W^n_r) \dd r
\right)$ where $\tilde{B}_n$ is the primitive function of
$\tilde{b}_n$ vanishing at zero.  Since $\tilde{b}_n$ and
$\tilde{b}_n'$ are clearly bounded by $cn^{-1/2}$ for some constant
$c$ only depending on $a$ and $b$ and $\abs{\tilde{B}_n(x)}\le c
n^{-1/2} \abs{x}$ we have: $ c^{-1} \exp\left( - c
  n^{-1/2}\abs{\W^n_{1}} \right) \le L^{n} \le c \exp\left( c n^{-1/2}
  \abs{\W^n_{1}} \right)$.  By \eqref{E:def_Vngauss} and the boundedness
of $s'_n$ we deduce $ c^{-1} \exp\left( - c n^{-1/2}\abs{V^n} \right)
\le L^{n} \le c \exp\left( c n^{-1/2} \abs{V^n} \right) $.  From this
and \eqref{E:girsanovPPt}, we obtain:
\begin{multline*}
  c^{-1} E_{\tilde{P}} \left[ h_0(U^n)h_1(V^n) e^{-c n^{-\frac{1}{2}}
      \abs{V^n}}\right]
  \\
  \le E_P \left[ h_0(U^n)h_1(V^n)\right] \le c E_{\tilde{P}} \left[
    h_0(U^n)h_1(V^n) e^{c n^{-\frac{1}{2}} \abs{V^n}}\right].
\end{multline*}
Hence we have transformed the problem of finding bounds for the
density of the law of $(U^n,V^n)$ under $P$ into an analogous problem
under $\tilde{P}$.  Consequently the bounds for $\pUV_{x_0}^n$ stated
in \eqref{E:maj_min_densite} will follow from the next lemma.
\begin{lem}
  Let $h_0$, $h_1$ be two non negative functions. There exist some
  constants $c_1>c_2>0$, depending only on the coefficients $a$ and
  $b$ such that:
\begin{multline*}
  c_1^{-1} \int \int h_0(u)h_1(v) e^{-c_1(u^2+v^2)} \dd u \dd v
  \le \\
  E_{\tilde{P}} \left[ h_0(U^n)h_1(V^n) \right] \le c_2^{-1} \int \int
  h_0(u)h_1(v) e^{-c_2(u^2+v^2)} \dd u \dd v.
\end{multline*}
\end{lem}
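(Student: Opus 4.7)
The plan is to exploit that, under $\tilde P$, $\W^n$ is a standard Brownian motion and $s_n^{-1}$ is a $C^2$ bilipschitz bijection of $\R$ with constants depending only on $\underline{a}$ and $\|a\|_\infty$. I would reduce the joint density of $(U^n,V^n)$ to the product of the density of $V^n$ (which is explicit) and a conditional density of $U^n$ given $\W^n_1$, each bounded above and below by Gaussians.

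First, I would disintegrate by conditioning on $\W^n_1=w_1$. Using the Brownian bridge decomposition $\W^n_r = rw_1 + B^{br}_r$, where $B^{br}$ is a standard Brownian bridge independent of $\W^n_1$, one has $V^n=s_n^{-1}(w_1)$ and, by the Taylor expansion $s_n^{-1}(x+y)-s_n^{-1}(x)=y\int_0^1 a_n(s_n^{-1}(x+ty))\dd t$,
\[
U^n = M_n(w_1) + R_n,\quad M_n(w_1) := \int_0^1 s_n^{-1}(rw_1)\dd\mu(r),\quad R_n := \int_0^1 A_r\,B^{br}_r\,\dd\mu(r),
\]
with $A_r := \int_0^1 a_n(s_n^{-1}(rw_1+tB^{br}_r))\dd t \in [\underline{a},\|a\|_\infty]$. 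The density of $V^n$ under $\tilde P$ is $\pi(v) = \phi(s_n(v))\,s_n'(v)$ with $\phi$ the standard Gaussian; the bilipschitz property of $s_n$ gives $c^{-1}e^{-cv^2}\le \pi(v)\le c\,e^{-c^{-1}v^2}$.

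Second, the core task is to show that, uniformly in $w_1$, the conditional density $q_n(\cdot\mid w_1)$ of $R_n$ satisfies Gaussian upper and lower bounds centered at $0$. For the upper bound I would apply Malliavin integration by parts to $R_n$ viewed as a functional of $B^{br}$, combined with the obvious sub-Gaussian tail on $R_n$ coming from the boundedness of $A_r$ and the Brownian bridge; non-degeneracy of the conditional Malliavin covariance of $R_n$ is ensured by $\mu((0,1))>0$ and $A_r\ge\underline{a}$ via essentially the same argument as Lemma \ref{L:mino_mallia_taille2}. For the lower bound I would use the Hirsch-Song skeleton technique: for a target value $r$, construct a deterministic bridge path of Cameron-Martin energy $O(r^2)$ driving $R_n$ to $r$, shift the law of $B^{br}$ accordingly by Girsanov, and bound the small-box probability of the shifted law from below. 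This yields $c^{-1}e^{-cr^2}\le q_n(r\mid w_1)\le c\,e^{-c^{-1}r^2}$.

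Finally, changing variable $w_1=s_n(v)$ and multiplying, the joint density satisfies $p(u,v)\asymp e^{-cv^2}\,e^{-c(u-M_n(s_n(v)))^2}$; and since $|M_n(s_n(v))|\le C|v|$, elementary algebra gives $\alpha(u^2+v^2)\le (u-M_n(s_n(v)))^2+v^2\le \beta(u^2+v^2)$ for some $0<\alpha\le\beta<\infty$, whence \eqref{E:maj_min_densite} after relabeling constants. The main obstacle I anticipate is the uniform-in-$w_1$ lower bound on $q_n(\cdot\mid w_1)$: because the random coefficient $A_r(w_1,B^{br})$ depends nontrivially on $w_1$, the skeleton construction and the accompanying Girsanov weight on the bridge must be controlled so that the estimates do not degrade as $|w_1|\to\infty$. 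The uniform bracketing $A_r\in[\underline{a},\|a\|_\infty]$ is precisely what keeps both the conditional Malliavin non-degeneracy and the Cameron-Martin energy of the skeleton uniformly bounded.
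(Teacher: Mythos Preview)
Your outline is sound and would lead to the result, but it takes a substantially heavier route than the paper. Both arguments begin by conditioning on $\W_1^n=w$ and treating the density of $V^n$ via the bilipschitz change of variable $v=s_n^{-1}(w)$; the divergence is in how the conditional law of $U^n$ is handled. You propose to bound the density of $R_n=U^n-M_n(w_1)$ directly on the Brownian-bridge space, invoking Malliavin integration by parts for the upper bound and a Cameron--Martin/skeleton construction for the lower bound; as you correctly flag, making the lower bound uniform in $w_1$ is the delicate step, and carrying it out rigorously requires real work. The paper sidesteps all of this with one further, purely elementary, decomposition of the bridge: write $\W^*_t=\xi\eta_t+\W^{**}_t$ with $\xi\sim\mathcal N(0,1)$ independent of the process $\W^{**}$ (two concatenated bridges on $[0,1/2]$ and $[1/2,1]$) and $\eta$ the triangle function. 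For each fixed realization of $\W^{**}$, the map
\[
x\longmapsto g_{\W^{**}}(x):=\int_0^1 s_n^{-1}\bigl(x\eta_t+\W^{**}_t+tw\bigr)\,\dd\mu(t)
\]
is a bilipschitz bijection of $\R$, with constants depending only on $\underline a$, $\|a\|_\infty$ and $\int_0^1\eta_t\,\dd\mu(t)>0$ (this is where \eqref{E:hypomu} enters), uniformly in $\W^{**}$, $w$ and $n$. Since $U^n=g_{\W^{**}}(\xi)$, a single one-dimensional change of variable in the Gaussian integral for $\xi$ gives both conditional bounds at once; the remaining randomness in $\W^{**}$ is absorbed by the trivial estimates $E[e^{-c\sup_t(\W^{**}_t)^2}]>0$ and $E[e^{\varepsilon\sup_t(\W^{**}_t)^2}]<\infty$ for small $\varepsilon$. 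In short, the paper isolates \emph{one} scalar Gaussian direction along which the functional is bilipschitz, whereas you attack the full bridge functional; your approach is more general in spirit but the paper's is shorter, self-contained, and avoids any Malliavin or skeleton machinery in this lemma.
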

\begin{proof}
  We first show the lower bound.  Using that the random variable $V^n$
  is measurable with respect to $\W_1^n$ (by \eqref{E:def_Vngauss}),
  we can write:
\begin{align}\nonumber
  E_{\tilde{P}} \left[ h_0(U^n)h_1(V^n) \right] &= E_{\tilde{P}}
  \left[h_1(V^n) E_{\tilde{P}} \left[ h_0(U^n) \mid \W_1^n
    \right]\right] \\ \label{E:eq_gfrak} & = \int \mathfrak{g}(w)
  h_1\left(s^{-1}_n(w)\right) E_{\tilde{P}} \left[ h_0(U^n) \mid
    \W_1^n=w \right] \dd w,
\end{align}
where $\mathfrak{g}$ is the density of the standard Gaussian law.  Now
let us admit temporarily the following relation on the conditional law
of $U^n$:
\begin{equation}\label{E:admitcondUn}
E_{\tilde{P}}\left[ h_0(U^n) \mid \W_1^n\right] \ge c^{-1}
e^{-c(\W_1^n)^2}
\int h_0(u)e^{-c u^2 }\dd u.
\end{equation}
Then $ E_{\tilde{P}} \left[ h_0(U^n)h_1(V^n) \right]$ is greater than:
$$c^{-1}\int h_0(u)e^{-c u^2 }\dd u \times\int
\mathfrak{g}(w)h_1\left(s_n^{-1}(w)\right)e^{-cw^2}\dd w.$$
The change
of variable $v=s_n^{-1}(w)$ in the second integral above, the
inequalities $\abs{w} \le c \abs{v}$ and $s_n'(v) \ge c$ give the new
lower bound
$$c^{-1}\int h_0(u)e^{-c u^2 }\dd u \times\int
\mathfrak{g}\left(s_n(v)\right)h_1\left(v\right)e^{-cv^2}\dd v,$$
with
a new constant $c$. Since $\mathfrak{g}$ is the Gaussian kernel and
thanks to the inequality $\abs{s_n(v)} \le c \abs{v}$, we deduce the
required lower bound for $E_{\tilde{P}} \left[ h_0(U^n)h_1(V^n)
\right]$.

We obtain the upper bound quite similarly. Let us temporarily admit
that for all $\varepsilon$ small enough there exists $c(\varepsilon)$
such that:
\begin{equation}\label{E:admitcondUndeux}
E_{\tilde{P}}\left[ h_0(U^n) \mid \W_1^n\right] \le c(\varepsilon)^{-1}
e^{\varepsilon(\W_1^n)^2}
\int h_0(u)e^{-c(\varepsilon) u^2 }\dd u.
\end{equation}
Plugging this in equation \eqref{E:eq_gfrak}, we deduce that
$E_{\tilde{P}}\left[ h_0(U^n) \mid \W_1^n\right]$ is smaller than
$$c(\varepsilon)^{-1}\int h_0(u)e^{-c(\varepsilon) u^2 }\dd u
\times\int \mathfrak{g}(w)e^{\varepsilon
  w^2}h_1\left(s_n^{-1}(w)\right)\dd w.$$
Since
$\mathfrak{g}(w)=\exp(-w^2/2)/\sqrt{2\pi}$, any choice of
$\varepsilon$ smaller than $1/4$ implies that the second integral in
the equation above is bounded by $c \int e^{-\frac{1}{4} w^2}
h_1\left(s_n^{-1}(w)\right)\dd w$. As for the lower bound, we conclude
by the change of variable $v=s_n^{-1}(w)$.\end{proof}

It remains to show \eqref{E:admitcondUn}--\eqref{E:admitcondUndeux}. This is
done in the following lemma.
\begin{lem} For some constant $c>0$ and $\overline{\varepsilon}>0$, we have
\begin{equation}\label{E:borne_cond_min}
E_{\tilde{P}}\left[ h_0(U^n) \mid \W_1^n \right] \ge c^{-1}
e^{-c(\W_1^n)^2}
\int h_0(u)e^{-c u^2 }\dd u.
\end{equation}
For all $\varepsilon \in )0,\overline{\varepsilon}($, there exists
$c(\varepsilon)>0$ such that,
\begin{equation}\label{E:borne_cond_maj}
E_{\tilde{P}}\left[ h_0(U^n) \mid \W_1^n \right] \le c(\varepsilon)^{-1}
e^{\varepsilon (\W_1^n)^2}\int h_0(u)e^{-c(\varepsilon) u^2 }\dd u.
\end{equation}
\end{lem}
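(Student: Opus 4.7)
The strategy is to condition on $\W^n_1=w$ and exploit the Brownian bridge representation of the conditioned path. Under $\tilde P$, the conditional law of $(\W^n_r)_{0\le r\le 1}$ given $\W^n_1=w$ is that of $rw+Z_r$, where $Z$ is a standard Brownian bridge on $[0,1]$ independent of $\W^n_1$. Setting
\[
f_w(z)\;:=\;\int_0^1 s_n^{-1}(rw+z_r)\,\dd\mu(r),
\]
one has $E_{\tilde P}[h_0(U^n)\mid\W^n_1=w]=E[h_0(f_w(Z))]$, so it suffices to prove upper and lower Gaussian-type bounds on the density $p_w$ of $f_w(Z)$, uniformly in $n$ and $w$. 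The crucial structural input is that $(s_n^{-1})'=(a_n\circ s_n^{-1})^{-1}$ is bounded above and below by fixed positive constants, uniformly in $n$ (by \ref{regulariteR}), so $f_w$ is uniformly Lipschitz in the sup-norm on the path.

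For the lower bound \eqref{E:borne_cond_min}, I would use a Cameron--Martin translation of the Brownian bridge. Fix once and for all a smooth $h_*:[0,1]\to\R$ with $h_*(0)=h_*(1)=0$, $h_*\ge 0$ on $(0,1)$, and $\int_0^1 h_*(r)\,\dd\mu(r)>0$, which is possible thanks to $\mu((0,1))>0$. Because $(s_n^{-1})'$ is bounded below, the map $\lambda\mapsto f_w(\lambda h_*)$ is a bi-Lipschitz bijection of $\R$ with constants independent of $n,w$, so for every target $u\in\R$ there exists $\lambda(u,w)$ with $|\lambda(u,w)|\le c(|u|+|w|)$ and $f_w(\lambda h_*)=u$. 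The Girsanov shift $Z\mapsto Z+\lambda h_*$, combined with the Lipschitz estimate $|f_w(Z+\lambda h_*)-u|\le c\sup_r|Z_r|$, reduces matters to a small-ball estimate for the bridge (whose probability is bounded below independently of $n$), and the resulting Radon--Nikodym cost is at worst $\exp(c\lambda^2)\le\exp(c(u^2+w^2))$. This yields the pointwise density lower bound $p_w(u)\ge c^{-1}\exp(-c(u^2+w^2))$, from which \eqref{E:borne_cond_min} follows by integration against $h_0$.

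For the upper bound \eqref{E:borne_cond_maj}, I would combine Malliavin integration by parts with Gaussian concentration of $Z$. The Malliavin derivative $Df_w(Z)$ is uniformly bounded and, by an argument analogous to Lemma~\ref{L:mino_mallia_taille2} (using again the uniform lower bound on $(s_n^{-1})'$ and $\mu((0,1))>0$), its $\mathbf L^2([0,1])$-norm is bounded below by a positive deterministic constant uniformly in $n,w$. The standard integration by parts formula of Malliavin calculus (Nualart \cite{Nualart1}) then produces $p_w(u)\le c\,\PP(|f_w(Z)|>|u|)^{1/p'}$ for any $p>1$, with implicit constants uniform in $n,w$. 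The trivial bound $|f_w(Z)|\le c(|w|+\sup_r|Z_r|)$ and Gaussian concentration of $\sup_r|Z_r|$ yield $\PP(|f_w(Z)|>|u|)\le c\exp(-c(|u|-c|w|)_+^2)$. Applying the Young-type inequality $(|u|-c|w|)_+^2\ge \varepsilon u^2-c(\varepsilon)w^2$, valid for any $\varepsilon\in(0,1)$, converts this (together with the uniform bound $p_w\le c$ in the regime $|u|\le c|w|$) into the claimed density estimate $p_w(u)\le c(\varepsilon)^{-1}\exp(\varepsilon w^2-c(\varepsilon) u^2)$, whence \eqref{E:borne_cond_maj}.

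The main technical obstacle is the lower bound. One must check that the Cameron--Martin shift $\lambda h_*$ actually reaches every $u\in\R$ with linear amplitude $|\lambda|\le c(|u|+|w|)$ (which hinges on the uniform positive lower bound for $(s_n^{-1})'$) and that the small-ball probability for the bridge can be combined with the exponential Girsanov factor in a quantitative way, since these two ingredients are not independent. The upper bound is more routine once the Malliavin calculus framework from Section~\ref{S:The_density} is in place, the only delicate point being to balance the $\varepsilon w^2$ and $c(\varepsilon)u^2$ terms via the Young-type inequality.
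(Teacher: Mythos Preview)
Your overall setup — condition on $\W^n_1=w$, work with the Brownian bridge $Z$, and exploit a fixed direction $h_*$ with $\int_0^1 h_*\,\dd\mu>0$ — is exactly the paper's. The gap is in the lower bound: a Cameron--Martin shift by $\lambda(u,w)h_*$ followed by a small-ball estimate only gives
\[
P\bigl(f_w(Z)\in[u-c\delta,\,u+c\delta]\bigr)\;\ge\;e^{-c\lambda(u,w)^2}\,P\Bigl(\sup_{r}|Z_r|\le\delta\Bigr),
\]
and to convert this into a \emph{density} bound you must divide by $\delta$ and let $\delta\to0$. But the small-ball probability of the bridge decays like $\exp(-c/\delta^2)$, so the ratio vanishes. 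Keeping $\delta$ fixed only controls the mass of macroscopic intervals, which is too weak for \eqref{E:borne_cond_min} since $h_0\ge0$ is arbitrary (equivalently, \eqref{E:borne_cond_min} is a pointwise lower bound on the conditional density).

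The paper bypasses this by replacing the shift-plus-small-ball step with an explicit one-dimensional \emph{disintegration} of the bridge: it writes $\W^*_t=\xi\,\eta_t+\W^{**}_t$, where $\eta$ is the triangle function on $[0,1]$, $\xi=2\W^*_{1/2}\sim\mathcal N(0,1)$, and $\W^{**}$ (the concatenation of two independent bridges on $[0,\tfrac12]$ and $[\tfrac12,1]$) is independent of $\xi$. Conditionally on $\W^{**}$, the map $x\mapsto g(x)=\int_0^1 s_n^{-1}(x\eta_t+\W^{**}_t+tw)\,\dd\mu(t)$ is a bi-Lipschitz bijection of $\R$ --- this is precisely your observation, with $\eta$ playing the role of $h_*$ --- so the integral over the \emph{scalar} Gaussian $\xi$ is handled by the honest change of variable $u=g(x)$, yielding a Jacobian bounded below and a factor $e^{-g^{-1}(u)^2/2}\ge\exp\bigl(-c(u^2+w^2+\sup_t|\W^{**}_t|^2)\bigr)$. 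Averaging over $\W^{**}$ gives \eqref{E:borne_cond_min}. The same change of variable, together with $(x-y)^2\ge\tfrac{\varepsilon}{1+\varepsilon}x^2-\varepsilon y^2$ and $E[e^{\varepsilon\sup_t|\W^{**}_t|^2}]<\infty$ for small $\varepsilon$, yields \eqref{E:borne_cond_maj}. In short: instead of shifting along $h_*$ and paying an infinite-dimensional small-ball cost, isolate a genuine one-dimensional Gaussian coordinate along that direction and integrate it out.

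Your Malliavin route to the upper bound is a valid alternative; the paper simply reuses the same one-dimensional change of variable for both inequalities, which is lighter here since it avoids setting up Malliavin calculus relative to the bridge.
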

\begin{proof}
  Let us recall that the process $\W_t^*:=\W^n_t-t \W_1^n $ is a
  Brownian bridge on $[0,1]$, independent of the variable $\W_1^n$.
  Thus, we can evaluate the conditional expectation
  $E_{\tilde{P}}\left[ h_0(U^n) \mid \W_1^n=w \right] $ as the
  expectation (recall \eqref{E:def_Ungauss}),
\begin{equation}\label{E:ecritPont}
E\left[ h_0\left(
\int_0^1
s_n^{-1}\left( \W_t^*+tw \right) \dd \mu(t)
\right)\right],
\end{equation}
for $\W^*$ some Brownian bridge.  This Brownian bridge itself admits a
decomposition
\begin{equation}\label{E:decompPont}
\W_t^*=\xi \eta_t+\W_t^{**},
\end{equation}
where $\xi$ is a $\mathcal{N}(0,1)$ variable, $\eta$ is the
deterministic triangle shaped function:
$$
\eta_t=\begin{cases} t &\text{if $t\in [0,1/2]$}
  \\
  (1-t) & \text{if $ t\in [1/2,1]$}
\end{cases},
$$
and $\W^{**}$ is the process on $[0,1]$ constructed as the
concatenation of two independent Brownian bridges, one on $[0,1/2]$
and another on $[1/2,1]$.  Furthermore in this decomposition
the {r.v.\ $\eta$} and the process $\W^{**}$ are independent.

For any realization of $\W^{**}$ we can introduce the real function,
$$
x \mapsto g_{\W^{**}}(x)= \int_0^{1} \left\{ s^{-1}_n \left(
    x\eta_t+\W_t^{**}+tw \right) \right\} \dd \mu(t).
$$
Using \eqref{E:decompPont} and the independence of $\xi$ and
$\W^{**}$, the quantity \eqref{E:ecritPont} now writes,
\begin{equation}  \label{E:intInOut}
E_{(\W^{**})}  E_{(\xi)} \left[h_0(g_{\W^{**}}(\xi) )\right]
\end{equation}
where the inner expectation denotes the expectation with respect to
the random variable $\xi$ and the outer one with respect to the
process $\W^{**}$.

First we evaluate the inner expectation. Using that $\xi$ is a
standard Gaussian variable we have
\begin{equation} \label{E:expect_int}
 E_{(\xi)} \left[h_0(g_{\W^{**}}(\xi) )\right]=(2\pi)^{-1/2}
 \int h_0(g_{\W^{**}}(x)) e^{-\frac{x^2}{2}} \dd x.
\end{equation}
Note now that for any realization of $\W^{**}$, the function $x\mapsto
g_{\W^{**}}(x)$ is differentiable and using that $\frac{1}{c}\le
(s^{-1}_n)' \le c$ we get
\begin{equation*}\label{E:borngWprime}
\frac{1}{c}
\int_0^{1}
\eta_t    \dd \mu(t)
\le
 g_{\W^{**}}'(x) \le c
\int_0^{1}
\eta_t    \dd \mu(t).
\end{equation*}
By assumption \eqref{E:hypomu} on the measure $\mu$ the integral
$\int_0^1 \eta_t \dd\mu(t)$ is positive. Thus the function $x\mapsto
g_{\W^{**}}(x)$ is invertible on $\mathbb{R}$, with a derivative
bounded from above and from below by some constant independent of
$\W^{**}$ and $n$.  This allows us to make a change of variable in
\eqref{E:expect_int} to obtain the bounds
\begin{equation}\label{E:double_enca}
c^{-1}
\int h_0(u) e^{- \frac{\left( g_{\W^{**}}^{-1}(u) \right)^2 }{2}} \dd u \le
E_{(\xi)} \left[h_0(g_{\W^{**}}(\xi) )\right]\le c
 \int h_0(u) e^{- \frac{\left( g_{\W^{**}}^{-1}(u) \right)^2 }{2}} \dd u.
\end{equation}

Now the proofs of \eqref{E:borne_cond_min} and
\eqref{E:borne_cond_maj} are
treated separately. \\
$\bullet$ For the lower bound, we have seen that $g_{\W^{**}}^{-1}$
is globally Lipschitz with a constant independent of $\W^{**}$ and
thus $\abs{g_{\W^{**}}^{-1}(u)} \le c \abs{u} +
\abs{g_{\W^{**}}^{-1}(0)} \le c \abs{u} + c \abs{g_{\W^{**}}(0)}$.
In addition, a simple computation from the definition of
$g_{\W^{**}}$ and then boundedness of $(s_n^{-1})'$ show that
\begin{equation} \label{E:majgWstar}
 \abs{g_{\W^{**}}(0)} \le c [\abs{w}+ \sup_{t\in[0,1]}
 \abs{\W^{**}_t}].
\end{equation}
Using this in \eqref{E:double_enca} we find a new lower bound for the
inner expectation:
$$
E_{(\xi)} \left[h_0(g_{\W^{**}}(\xi) )\right]\ge c^{-1} e^{-c w^2}
e^{-c \sup_{t\in[0,1]} (W^{**}_t)^2} \int h_0(u) e^{-c u^2} \dd u .
$$
Taking the expectation with respect to $\W^{**}$ proves that
\eqref{E:intInOut} is larger than $c^{-1}  E_{(\W^{**})} \left( e^{-c \sup_{t\in[0,1]} (W^{**}_t)^2} \right) \times e^{-c w^2} \int h_0(u) e^{-c u^2} \dd u.$ This gives \eqref{E:borne_cond_min}.\\
$\bullet$ For the upper bound, we write using that $g_{\W^{**}}$ is
Lipschitz $\abs{ \abs{u} - \abs{g_{\W^{**}}(0)} } \le \abs{
  g_{\W^{**}}( g_{\W^{**}}^{-1}(u)) - g_{\W^{**}}(0) } \le
c\abs{g_{\W^{**}}^{-1}(u)}$. Together with the relation $(x-y)^2 \ge
x^2 \frac{\varepsilon}{1+\varepsilon}-\varepsilon y^2$ (for $x,y \in
\mathbb{R}$, $\varepsilon \in (0,1)$), we deduce that $\exp\left(
  -\frac{1}{2}(g_{\W^{**}}^{-1}(u))^2 \right)$ is upper bounded by
$$
\exp\left( -\frac{\varepsilon u^2}{2c^2(1+\varepsilon)}+
  \frac{\varepsilon (g_{\W^{**}}(0))^2}{2c^2} \right)\leq \exp\left(
  -\frac{\varepsilon u^2}{2c^2(1+\varepsilon)}+ \varepsilon
  w^2+\varepsilon \sup_{t \in [0,1]} (\W^{**}_t)^2\right)
$$
where we have used \eqref{E:majgWstar}. Combining this with
\eqref{E:double_enca} and taking the expectation with respect to
$\W^{**}$, we get that the quantity \eqref{E:intInOut} is smaller
than: $ c \int h_0(u) e^{-\frac{\varepsilon u^2}{2c(1+\varepsilon)}}
e^{\varepsilon w^2} E_{\W^{**}}\left( e^{ \varepsilon \sup_{t \in
      [0,1]} (\W^{**}_t)^2} \right).  $ The last expectation is finite
as soon as $\varepsilon$ is small enough, and thus
\eqref{E:borne_cond_maj} holds.
\end{proof}

\subsubsection{Proof of Proposition \ref{P:inversibleMalliaB}}
\label{S:Proof_of_Proposition}

To have shorter notations we set $\overline{\xr}^n_j=\int_0^1
\xr^n_{j+t} \dd \mu(t)$, for $j \ge 0$. First we prove:

\begin{lem}
\label{L:malliavin_markov}
Let us define $\Gamma^k$ the Malliavin covariance matrix of the vector
$\left( ( \overline{\xr}^n_j,\xr^n_{j+1})\right)_{j=0,\dots,k-1}$ of size $2k$.
Then this matrix is $a.s$ invertible and $E( \det(\Gamma^k)^{-p} ) \le
c(p,k).$
\end{lem}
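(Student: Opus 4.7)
The plan is to proceed by induction on $k$, with the base case $k=1$ being exactly Lemma \ref{L:mino_mallia_taille2}. For the induction step, I separate the last block by writing the vector as $(F_{\mathrm{past}}, F_{k-1})$ with $F_{\mathrm{past}}=(\overline{\xr}^n_j,\xr^n_{j+1})_{j=0,\dots,k-2}$ and $F_{k-1}=(\overline{\xr}^n_{k-1},\xr^n_k)$, so that $\Gamma^k$ has the block form
\[
\Gamma^k = \begin{pmatrix} \Gamma^{k-1} & B \\ B^T & C \end{pmatrix},\qquad
\det(\Gamma^k) = \det(\Gamma^{k-1})\,\det\bigl(C - B^T(\Gamma^{k-1})^{-1}B\bigr).
\]
By the induction hypothesis, $\Gamma^{k-1}$ is a.s.\ invertible with $E(\det(\Gamma^{k-1})^{-p})\le c(p,k-1)$, so the task reduces to bounding the Schur complement from below.

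The key structural observation uses formula \eqref{E:formulDX}: for $s\in[0,k-1]$, both components of $D_s F_{k-1}$ are proportional to the same scalar $g(s):=(\yr^n_s)^{-1}a_n(\xr^n_s)$ with the common, $s$-independent factor $u=(f_{k-1},\yr^n_k)^T$, where $f_{k-1}:=\int_0^1 \yr^n_{k-1+t}\,\dd\mu(t)$. This implies that $B$ has rank $1$, namely $B = q\,u^T$ with $q=\int_0^{k-1} D_s F_{\mathrm{past}}\,g(s)\,\dd s\in\R^{2(k-1)}$, and also that $C$ splits as $C = c_1\,u u^T + C_{\mathrm{fut}}$ with $c_1=\int_0^{k-1} g(s)^2\,\dd s$ and $C_{\mathrm{fut}}=\int_{k-1}^k D_s F_{k-1}(D_s F_{k-1})^T\,\dd s$. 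A Cauchy--Schwarz inequality in $\mathbf{L}^2([0,k-1])$ applied to $v^T q = \int_0^{k-1} v^T D_s F_{\mathrm{past}}\,g(s)\,\dd s$ yields $(v^Tq)^2 \le c_1\,v^T\Gamma^{k-1}v$ for every $v\in\R^{2(k-1)}$, hence $B^T(\Gamma^{k-1})^{-1}B = \tilde c\,u u^T$ with $0\le\tilde c\le c_1$.

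Consequently, the Schur complement equals $S=(c_1-\tilde c)\,u u^T + C_{\mathrm{fut}}$ and dominates $C_{\mathrm{fut}}$ in the positive semidefinite order, so $\det(S)\ge\det(C_{\mathrm{fut}})$. By the Markov property of $\xr^n$, conditionally on $\xr^n_{k-1}$ the matrix $C_{\mathrm{fut}}$ has the same law as the Malliavin covariance $\gamma_{U^n,V^n}$ from Lemma \ref{L:mino_mallia_taille2} applied to the diffusion restarted at $\xr^n_{k-1}$; since the constants there depend only on the coefficient bounds, $E(\det(C_{\mathrm{fut}})^{-p})\le c(p)$ uniformly in $k$. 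Combining the bound $\det(\Gamma^k)^{-1}\le \det(\Gamma^{k-1})^{-1}\det(C_{\mathrm{fut}})^{-1}$ via Cauchy--Schwarz with the inductive control on $E(\det(\Gamma^{k-1})^{-2p})$ delivers $E(\det(\Gamma^k)^{-p})\le c(p,k)$, closing the induction.

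The main obstacle, and the reason the induction closes cleanly, is the rank-one alignment of both $B$ and the past-part of $C$ along the common direction $u$: this is exactly the proportionality of the two components of $D_s F_{k-1}$ on $[0,k-1]$ via the scalar $g(s)$, which in turn comes from the one-dimensional nature of the driving Brownian motion together with formula \eqref{E:formulDX}. Without this alignment, the Schur complement would couple $C_{\mathrm{fut}}$ to $\Gamma^{k-1}$ in a way that would prevent the induction from reducing to Lemma \ref{L:mino_mallia_taille2} on a single block.
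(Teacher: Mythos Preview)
Your proof is correct and follows essentially the same inductive strategy as the paper: both exploit the flow property \eqref{E:formulDX} to see that, restricted to $[0,k-1]$, the Malliavin derivatives of $(\overline{\xr}^n_{k-1},\xr^n_k)$ are proportional to $D_\cdot \xr^n_{k-1}$, and both reduce the increment to the single-block estimate of Lemma~\ref{L:mino_mallia_taille2} (your $C_{\mathrm{fut}}$ is exactly the paper's $\gamma_k$). The only difference is packaging: the paper performs column operations on $\Gamma^k$ to obtain the \emph{exact} factorization $\det\Gamma^k=\det\Gamma^{k-1}\det\gamma_k$, whereas you use the Schur complement together with Cauchy--Schwarz to obtain the inequality $\det\Gamma^k\ge\det\Gamma^{k-1}\det C_{\mathrm{fut}}$; in fact your Cauchy--Schwarz step is an equality here (since $g(s)=(\yr^n_{k-1})^{-1}D_s\xr^n_{k-1}$ lies in the span of the components of $D_sF_{\mathrm{past}}$), so the two computations coincide.
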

\begin{proof}
  In the case $k=1$, the lemma reduces to Lemma
  \ref{L:mino_mallia_taille2}. For $k\ge 2$, we proceed by
  induction by establishing simple relations between the columns of $\Gamma^k$ (this simplification follows from the flow property of the process $\xr^n$).

  To see this, notice that firstly by \eqref{E:formulDX} if $t<k-1$
  and $s>k-1$, we have $D_t \xr^n_s=D_t \xr^n_{k-1} \yr^n_s
  (\yr^n_{k-1})^{-1}$; and secondly if $t>k-1$ and $s<k-1$, $D_t
  \xr^n_s=0$. Using these two properties, a calculation shows that if
  $(C_{j})_{j=1,\dots,2k}$ denote the columns of $\Gamma^k$, we have  the relation,
\begin{multline*}
  \left[C_{2k-1};C_{2k}\right]= \left[ \left( \int_{0}^{1}
      \yr^n_{k-1+s} (\yr^n_{k-1})^{-1} \dd \mu(s) \right) C_{2k-2};
    \yr^n_{k} (\yr^n_{k-1})^{-1}C_{2k-2}
  \right]\\+
  \left(\begin{array}{cc} 0 & 0 \\ \vdots& \vdots \\0 & 0 \\
      \multicolumn{2}{c} {\begin{bmatrix}\gamma_k\end{bmatrix}}
\end{array} \right)
\end{multline*}
where $\gamma_k$ is the matrix of size $2\times 2$ given by
$$
\begin{pmatrix}
  \int_{k-1}^{k} (D_t \overline{\xr}^n_{k-1})^2 \dd t & \int_{k-1}^{k}
  (D_t \xr^n_k) (D_t \overline{\xr}^n_{k-1}) \dd t
  \\
  \int_{k-1}^{k} (D_t \xr^n_k) (D_t \overline{\xr}^n_{k-1}) \dd t &
  \int_{k-1}^{k} (D_t \xr^n_k)^2 \dd t
\end{pmatrix}.$$
This proves that $\det{\Gamma^k}=\det{\Gamma^{k-1}} \det{\gamma_k}$.
But it can be seen that the matrix $\gamma_k$ has an expression
similar to $\gamma_{U^n,V^n}$ (but with integration interval shifted
from $[0,1]$ to $[k-1,k]$) from which we can prove $E(
(\det{\gamma_k})^{-p}) \le c(p)$.

The lemma then follows from induction on $k$.
%
\end{proof}
Now we can deduce the Proposition \ref{P:inversibleMalliaB}.
Recalling \eqref{E:defU0}--\eqref{E:defUk} we can find an invertible
matrix $M$ of size $2k \times 2k$ that maps
$\left((\overline{\xr}^n_j,\xr^n_{j+1})\right)_{j=0,\dots,k-1}$ into
a vector whose $k+1$ first components are exactly
$(U_0^n,\dots,U_{k}^n)$. Denoting $\hat{\Gamma}^k$ the Malliavin
covariance matrix of the image by $M$ of
$\left((\overline{\xr}^n_j,\xr^n_{j+1})\right)_{j=0,\dots,k-1}$, we
have $\hat{\Gamma}^k=M \Gamma^k M^\star$. Thus, Lemma
\ref{L:malliavin_markov} yields $E( (\det(\hat{\Gamma}^k))^{-p} )
\le c(p,k)$ since $M$ is invertible. Observing that the Malliavin
covariance matrix $K(\theta)$ is the matrix extracted from the $k+1$
first rows and columns of $\hat{\Gamma}^k$ we deduce Proposition
\ref{P:inversibleMalliaB}.


\subsection{Some estimates on the change of measures}
For this section we denote by $\xr$ the canonical process on
$\mathcal{C}([0,\infty))$ and we consider the random
variable on this space defined by $H=f(U_0,\dots,U_k)$, where
$(U_0,\dots,U_k)$ is given by \eqref{E:defU0}--\eqref{E:defUk} with the canonical
process $\xr$ in place of $\xr^{\theta,n}$;
we denote by $E_{\theta,x_0}^n$ the expectation with respect to the measure induced
on the canonical space by the law of
$\xr^{\theta,n}$ solution of \eqref{E:defXronde}.

\begin{lem} \label{L:stabilite_taille}
There exist $r\ge1$ and a constant $c(k)\ge 0$, such that  $\forall H=f(U_0,\dots,U_k)\ge0$, $\forall \theta,\theta'
\in \Theta$, $\forall x_0 \in \mathbb{R}$, we have
$$
E_{\theta',x_0}^n[H] \le c(k) E_{\theta,x_0}^n[H^{r}]^{\frac{1}{r}}.
$$
\end{lem}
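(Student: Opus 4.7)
The plan is to combine the Gaussian-type density bounds of Theorem \ref{T:densiteUV}, which are uniform in $n$, $\theta$ and $x_0$, with the Markov structure of the observations and a single application of H\"older's inequality. Since $(U_0,\dots,U_k)$ is an invertible linear image of the raw observations $(\Xb_0,\dots,\Xb_{k-1},X_{k/n})$, $H$ may be rewritten as a non-negative function $\tilde H$ of the latter. By the corollary following Theorem \ref{T:densiteUV}, the bi-dimensional sequence $(\Xb_l, X_{(l+1)/n})_{l=0,\dots,k-1}$ is a Markov chain under $P^n_{\theta,x_0}$ with transition density
$$
\pi^\theta(\bar x, x' \mid x) = n\, \pUV^n_x\bigl(n^{1/2}(\bar x - x),\, n^{1/2}(x' - x),\, \theta\bigr),
$$
so both $E^n_{\theta', x_0}[H]$ and $E^n_{\theta, x_0}[H^r]$ can be written as integrals over a $2k$-dimensional auxiliary space against $\prod_l \pi^{\theta'}$ and $\prod_l \pi^\theta$, respectively.

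Setting $q_l := (\bar x_l - x_l)^2 + (x_{l+1} - x_l)^2$, Theorem \ref{T:densiteUV} yields the uniform envelopes $\prod_l \pi^{\theta'} \leq (c_2^{-1} n)^k \exp(-c_2 n \sum_l q_l)$ and $\exp(-c_1 n \sum_l q_l) \leq (c_1 n^{-1})^k \prod_l \pi^\theta$. I will then split
$$
\tilde H\,e^{-c_2 n \sum_l q_l} = \bigl[\tilde H\, e^{-c_1 n \sum_l q_l / r}\bigr] \cdot \bigl[e^{(c_1/r - c_2) n \sum_l q_l}\bigr]
$$
and apply H\"older's inequality with exponents $r$ and $r/(r-1)$, taking any fixed $r > c_1/c_2$ so that the exponent $(c_1/r - c_2)$ is strictly negative. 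The second H\"older factor then reduces to a convergent Gaussian integral on $\mathbb{R}^{2k}$; the first factor, bounded by the lower envelope on $\pi^\theta$, produces $E^n_{\theta, x_0}[H^r]^{1/r}$ up to $k$-dependent multiplicative constants.

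Upon collecting powers of $n$, the leading $n^k$ from the upper bound and the factors $n^{-k/r}$ and $n^{-k(r-1)/r}$ from the two H\"older pieces cancel exactly, delivering the announced $n$-free inequality $E^n_{\theta', x_0}[H] \leq c(k)\, E^n_{\theta, x_0}[H^r]^{1/r}$ with $r$ depending only on the ratio $c_1/c_2$ (and hence only on the coefficient bounds of the SDE). The one technical verification required is that $\sum_l q_l$ defines a positive definite quadratic form on the $2k$-dimensional space once $x_0$ is held fixed, so that the H\"older remainder integral is finite; this follows immediately from the chain structure, which couples every pair of consecutive variables. Everything else is a routine accounting of constants, with no further subtlety.
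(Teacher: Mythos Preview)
Your proof is correct and follows essentially the same route as the paper: the uniform Gaussian envelopes of Theorem~\ref{T:densiteUV}, the Markov-chain factorization of the joint density of $(\Xb_l,X_{(l+1)/n})_l$, and H\"older's inequality with $r>c_1/c_2$. The only difference is organizational: the paper writes the change of measure explicitly as $E^n_{\theta',x_0}[H]=E^n_{\theta,x_0}\bigl[H\prod_{j=0}^{k-1}Z_{j,\theta,\theta'}\bigr]$ with $Z_j$ the one-step likelihood ratios, applies H\"older once to separate $H$, and then bounds $E^n_{\theta,x_0}\bigl[\prod_j Z_j^{r'}\bigr]$ by successive conditioning so that each step reduces to a single two-dimensional Gaussian integral, whereas you apply H\"older globally on the $2k$-dimensional integral and verify afterwards that $\sum_l q_l$ is positive definite; the admissible range of $r$ and the resulting constants coincide.
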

\begin{proof}
Recalling the notation of Section \ref{S:Existence_of}
we denote $\pUV_{x_0}^n(u,v,\theta)$ the density of the vector
\eqref{E:coupleUV} and for
$j=0,\dots,k-1$ we let
\begin{equation}\label{E:defZj}
Z_{j,\theta,\theta'}=
\frac{
\pUV_{\xr_{j}}^n
\left(  \int_{0}^1
(\xr_{j+s}-\xr_{j}) \dd \mu (s) ,
(\xr_{j+1}-\xr_{j}),
\theta'\right)
}
{\pUV_{\xr_j}^n
\left(
\int_{0}^{1}
( \xr_{j+s}-\xr_j )\dd \mu (s),
(\xr_{j+1}-\xr_{j}),\theta \right)
}.
\end{equation}
Then using the Markov property of the process $\xr$ under the laws
$P^{n}_{\theta}$ and $P^{n}_{\theta'}$, we have
\begin{align*}
E^n_{\theta',x_0}[H]&=E^n_{\theta,x_0}\left[
H \prod_{j=0}^{k-1} Z_{j,\theta,\theta'}\right]
\le E^n_{\theta,x_0}[H^r]^{\frac{1}{r}}
E^n_{\theta,x_0} \left[ \prod_{j=0}^{k-1} (Z_{j,\theta,\theta'})^{r'} \right]^{\frac{1}{r'}},
\end{align*}
where $r$ and $r'$ are conjugate exponents. But we know by Theorem
\ref{T:densiteUV} that there exist two constants $0<c_2\le c_1$
(uniform w.r.t. $\theta,x_0,n$) such that
$$
c_1^{-1}e^{-c_1(u^2+v^2)} \le \pUV^n_{x_0}(u,v,\theta) \le
c_2^{-1}e^{-c_2(u^2+v^2)}.
$$
Then one can bound the conditional expectation
$E_{\theta,x_0}^n[(Z_{k-1,\theta,\theta'})^{r'} \mid \xr_s, s\le (k-1) ]$ by
$$
\frac{c_1^{(r'-1)}}{c_2^{r'}}
\int_{\mathbb{R}^2} e^{(u^2+v^2)(-r'c_2+(r'-1)c_1 )} \dd u \dd v .
$$
But if $r$ is chosen large enough such that $r'$ is sufficiently
close to 1 the latter integral converges and is equal to some constant $\kappa$.
Proceeding by induction we get:
\begin{equation*}
E_{\theta,x_0}^n \left[ \prod_{j=0}^{k-1} (Z_{j,\theta,\theta'})^{r'} \right]^{\frac{1}{r'}}
\le E_{\theta,x_0}^n \left[ \prod_{j=0}^{k-2} (Z_{j,\theta,\theta'})^{r'} \right]^{\frac{1}{r'}} \kappa^{\frac{1}{r'}}
\le \dots
\le \kappa^{k/r'}
\end{equation*}
 which gives the result.
\end{proof}
\begin{lem}\label{L:stabilite_centrage}
There exist $c(k)\ge 0$ and $\alpha\ge 1$ such that
$\forall H=f(U_0,\dots,U_k)$ (with $E_{\theta,x_0}^n|H|^{\alpha}<+\infty$), $\forall \theta,\theta'
\in \Theta$, $\forall x_0 \in \mathbb{R}$, we have
\begin{equation}\label{E:stabilite_centrage_but}
\abs{E_{\theta',x_0}^n [H]- E_{\theta,x_0}^n[H]} \le c(k) \abs{\theta-\theta'}
 [E_{\theta,x_0}^n|H|^{\alpha}]^{\frac{1}{\alpha}}.
\end{equation}
\end{lem}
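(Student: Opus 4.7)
The plan is to extend the Markov-chain change-of-measure argument from the proof of Lemma \ref{L:stabilite_taille}. The starting point is the same identity
\begin{equation*}
E^n_{\theta',x_0}[H] - E^n_{\theta,x_0}[H] = E^n_{\theta,x_0}\left[H\Bigl(\prod_{j=0}^{k-1} Z_{j,\theta,\theta'} - 1\Bigr)\right],
\end{equation*}
with $Z_{j,\theta,\theta'}$ as in \eqref{E:defZj}. I would telescope $\prod_{j} Z_j - 1 = \sum_{j=0}^{k-1}(Z_j-1)\prod_{i<j} Z_i$ and apply H\"older's inequality with conjugate exponents $\alpha$ and $\alpha'$, where $\alpha$ is chosen sufficiently large (equivalently $\alpha'$ close to $1$), to reduce the bound to showing, for each $j$,
\begin{equation*}
E^n_{\theta,x_0}\bigl[|Z_j-1|^{\alpha'}\prod_{i<j}|Z_i|^{\alpha'}\bigr] \le c(k)\abs{\theta-\theta'}^{\alpha'}.
\end{equation*}

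The central new estimate is a uniform-in-$y$ bound on the conditional moment $\phi(y) := E^n_{\theta,x_0}[|Z_j - 1|^{\alpha'}\mid \xr_j = y]$. Using
\begin{equation*}
Z_{j,\theta,\theta'} - 1 = \int_\theta^{\theta'} \frac{\dot\pUV^n_y}{\pUV^n_y}(\cdot,\cdot,s)\,\frac{\pUV^n_y(\cdot,\cdot,s)}{\pUV^n_y(\cdot,\cdot,\theta)}\,ds,
\end{equation*}
where the arguments are the observed bi-dimensional increment on the $j$-th sub-interval, H\"older in the $s$-integral followed by integration against $\pUV^n_y(\cdot,\cdot,\theta)$ gives
\begin{equation*}
\phi(y) \le \abs{\theta-\theta'}^{\alpha'-1}\int_\theta^{\theta'}\int \Bigabs{\frac{\dot\pUV^n_y}{\pUV^n_y}(u,v,s)}^{\alpha'}\pUV^n_y(u,v,s)\Bigl(\frac{\pUV^n_y(u,v,s)}{\pUV^n_y(u,v,\theta)}\Bigr)^{\alpha'-1}du\,dv\,ds.
\end{equation*}
By Theorem \ref{T:densiteUV}, the density ratio is dominated by $c\,e^{C(u^2+v^2)}$. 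A further H\"older splits the remaining expectation under $P^n_{s,y}$ into (i) an $(\alpha'p)$-th moment of the score $\dot\pUV^n_y/\pUV^n_y$, which is finite by Corollary \ref{C:majdotpp} applied with block size $k=1$ (and noting that $(U^n,V^n)$ is a linear bijection of $(U_0^n,U_1^n)$), and (ii) a Gaussian integral of the form $E^n_{s,y}[e^{C'p'(\alpha'-1)(U^2+V^2)}]$, finite again by the upper bound of Theorem \ref{T:densiteUV} provided $\alpha'-1$ is small. This yields $\phi(y)\le c\abs{\theta-\theta'}^{\alpha'}$ uniformly in $y$.

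Using the Markov property to pull the conditional expectation outside,
\begin{equation*}
E^n_{\theta,x_0}\bigl[|Z_j-1|^{\alpha'}\prod_{i<j}|Z_i|^{\alpha'}\bigr] = E^n_{\theta,x_0}\bigl[\phi(\xr_j)\prod_{i<j}|Z_i|^{\alpha'}\bigr] \le c\abs{\theta-\theta'}^{\alpha'}\,E^n_{\theta,x_0}\prod_{i<j}|Z_i|^{\alpha'},
\end{equation*}
and the remaining product-moment is bounded by $c(k)$ by the iterated Gaussian-density integration already carried out in the proof of Lemma \ref{L:stabilite_taille}, valid because $\alpha'$ is close to $1$. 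Summing over $j$ then delivers \eqref{E:stabilite_centrage_but}. The main obstacle is the simultaneous calibration of $\alpha'>1$: it has to be close enough to $1$ for the Gaussian-in-$(u,v)$ weights collected from density-ratio bounds to remain integrable against $\pUV^n_y(s)$, yet just large enough that Corollary \ref{C:majdotpp} supplies the required score moment. Both constraints depend only on the universal constants from Theorem \ref{T:densiteUV} and Corollary \ref{C:majdotpp}, so an admissible $\alpha'$ (and hence $\alpha=\alpha'/(\alpha'-1)$) can be fixed independently of $n,\theta,\theta',x_0$.
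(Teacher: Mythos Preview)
Your proof is correct and follows essentially the same route as the paper: telescope the product $\prod_j Z_j - 1$, apply H\"older, use the Markov property to separate the single factor $|Z_j-1|^{\alpha'}$ from the product of the remaining $Z_i$'s, bound the latter via the Gaussian density estimates of Theorem~\ref{T:densiteUV} (as in Lemma~\ref{L:stabilite_taille}), and bound the former by combining Corollary~\ref{C:majdotpp} (with block size $1$) with the density-ratio bound from Theorem~\ref{T:densiteUV}. The only differences are cosmetic: you telescope with the product over past indices $i<j$ whereas the paper takes future indices $j>i$, and you inline the bound on $E[|Z_j-1|^{\alpha'}\mid\xr_j=y]$ that the paper isolates as a separate lemma (Lemma~\ref{L:Zmoins_un}). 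One minor remark: your closing discussion of ``simultaneous calibration'' overstates the difficulty, since Corollary~\ref{C:majdotpp} holds for all $p>1$ and therefore imposes no lower constraint on $\alpha'$; only the upper constraint (that $\alpha'-1$ be small enough for the Gaussian weights to be integrable) matters.
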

\begin{proof}
Using the notations of Lemma \ref{L:stabilite_taille}, we write
\begin{align*}
E_{\theta',x_0}^n[H]-E_{\theta,x_0}^n[H]
&=E_{\theta,x_0}^n\left[ \left( \prod_{j=0}^{k-1} Z_{j,\theta,\theta'} -1\right) H \right]
\\
&=\sum_{i=0}^{k-1} E_{\theta,x_0}^n \left[
\left( Z_{i,\theta,\theta'} -1 \right) \prod_{j=i+1}^{k-1} Z_{j,\theta,\theta'}
 H \right].
\end{align*}
Thus for conjugate exponents $\alpha$ and $\beta$, the left
hand side of \eqref{E:stabilite_centrage_but} is bounded by
\begin{align*}
&
\sum_{i=0}^{k-1} \left[E_{\theta,x_0}^n  \abs{H}^\alpha\right]^{\frac{1}{\alpha}}
E_{\theta,x_0}^n \left[\abs{ Z_{i,\theta,\theta'} -1 }^\beta
\prod_{j=i+1}^{k-1} (Z_{j,\theta,\theta'})^\beta
\right]^{\frac{1}{\beta}}
\\
=&
\sum_{i=0}^{k-1} \left[E_{\theta,x_0}^n  \abs{H}^\alpha\right]^{\frac{1}{\alpha}}
E_{\theta,x_0}^n \left[\abs{ Z_{i,\theta,\theta'} -1 }^\beta
E_{\theta,x_0}^n \left[  \prod_{j=i+1}^{k-1} (Z_{j,\theta,\theta'})^\beta
\mid \xr_s, s \le i+1 \right]
\right]^{\frac{1}{\beta}}
\end{align*}
Using the Markov property of $\xr$ it can be shown exactly as in Lemma
\ref{L:stabilite_taille} that the conditional expectation in the equation above
is finite, as soon as $\beta$ is small enough and bounded by $\kappa^{k-i-1}$.
Thus by Lemma \ref{L:Zmoins_un} below, we deduce
$\abs{E_{\theta',x_0}[H]- E_{\theta,x_0}[H]}  \le c(\beta) \abs{\theta-\theta'}
\left[E_{\theta,x_0}^n
 \abs{H}^\alpha\right]^{\frac{1}{\alpha}} \sum_{i=0}^{k-1}
 {\kappa}^{\frac{k-i-1}{\beta}}$.
\end{proof}
\begin{lem}\label{L:Zmoins_un}
There exists $\overline{\beta}>1$ such that for all $1<\beta \le \overline{\beta}$
we have:
$$
E_{\theta,x_0}^n \left[\abs{ Z_{i,\theta,\theta'} -1 }^\beta \right]^{\frac{1}{\beta}}
\le c(\beta)\abs{\theta-\theta'}.
$$
\end{lem}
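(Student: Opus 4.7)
The plan is to represent $Z_{i,\theta,\theta'}-1$ as a line integral of its $\theta'$-derivative, and then dominate the integrand by a Hölder argument that separates the likelihood ratio from the score.

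First I would use Theorem \ref{T:likehood_block} (together with the strict positivity of $\pUV$ given by Theorem \ref{T:densiteUV}) to write, almost surely,
\[
Z_{i,\theta,\theta'}-1
= \int_\theta^{\theta'} Z_{i,\theta,s}\;
\frac{\dot{\pUV}_{\xr_i}^n}{\pUV_{\xr_i}^n}\bigl(U_i,V_i,s\bigr)\,\dd s,
\]
where $(U_i,V_i)=\bigl(\int_0^1(\xr_{i+s}-\xr_i)\dd\mu(s),\,\xr_{i+1}-\xr_i\bigr)$ denote the increments appearing in \eqref{E:defZj}. Minkowski's integral inequality then gives
\[
\bignorm{Z_{i,\theta,\theta'}-1}_{L^\beta(P_{\theta,x_0}^n)}
\le \int_\theta^{\theta'}
\Bignorm{Z_{i,\theta,s}\,\frac{\dot{\pUV}}{\pUV}(U_i,V_i,s)}_{L^\beta(P_{\theta,x_0}^n)}\dd s,
\]
and I apply Hölder with conjugate exponents $p,q>1$ inside the $L^\beta$-norm to factor this into $\norme{Z_{i,\theta,s}}_{L^{\beta p}}$ times $\norme{(\dot{\pUV}/\pUV)(\cdot,s)}_{L^{\beta q}}$.

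Next I bound the first factor uniformly in $s,\theta,x_0,n$. Using the two-sided Gaussian bounds of Theorem \ref{T:densiteUV},
\[
E_{\theta,x_0}^n\bigl[Z_{i,\theta,s}^m\bigr]
= \int \pUV^n_{\xr_i}(u,v,s)^m\,\pUV^n_{\xr_i}(u,v,\theta)^{1-m}\,\dd u\,\dd v
\le c_1^{m-1}c_2^{-m}\!\!\int\! e^{[(m-1)c_1-mc_2](u^2+v^2)}\dd u\,\dd v,
\]
which is finite as soon as $m<c_1/(c_1-c_2)$. Hence, provided $\beta p$ is chosen close enough to $1$, this $L^{\beta p}$-norm is uniformly bounded.

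The main obstacle lies in the second factor, because Corollary \ref{C:majdotpp} controls $(\dot{\pUV}/\pUV)(\cdot,s)$ only under the matching measure $P_{s,x_0}^n$, whereas the expectation is under $P_{\theta,x_0}^n$. I handle this mismatch by a further change of measure,
\[
E_{\theta,x_0}^n\Bigcro{\Bigabs{\tfrac{\dot{\pUV}}{\pUV}(U_i,V_i,s)}^{\beta q}}
= E_{s,x_0}^n\Bigcro{Z_{i,s,\theta}\,\Bigabs{\tfrac{\dot{\pUV}}{\pUV}(U_i,V_i,s)}^{\beta q}},
\]
and one more Hölder splits this into an $L^{r'}(P_{s,x_0}^n)$-moment of $Z_{i,s,\theta}$ (bounded by the previous step with the roles of $s$ and $\theta$ swapped) times an $L^{\beta q r}(P_{s,x_0}^n)$-moment of the score. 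The latter is exactly in the setting of Corollary \ref{C:majdotpp} applied to the block of size $k=1$ (since the linear map with unit Jacobian between $(U_0,U_1)$ and $(U,V)$ transports the explicit score representation without change). Picking $\bar\beta>1$ small enough that the compound exponents $\beta p$, $r'$ above all stay below the threshold $c_1/(c_1-c_2)$ of step 3, we finally collect all three uniform bounds and integrate $\dd s$ over $[\theta,\theta']$ to produce $c(\beta)\abs{\theta-\theta'}$, which is the claim.
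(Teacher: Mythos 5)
Your proof is correct and follows essentially the same route as the paper: write $Z_{i,\theta,\theta'}-1$ as an integral of $\dot{\pUV}/\pUV$ via Theorem \ref{T:likehood_block} with $k=1$, apply Minkowski, control likelihood-ratio moments by the two-sided Gaussian bounds of Theorem \ref{T:densiteUV} (for exponents below $c_1/(c_1-c_2)$), and control the score by Corollary \ref{C:majdotpp} under the matching measure. The only difference is cosmetic — you apply H\"older under $P^n_{\theta,x_0}$ first and then change measure on the score factor, whereas the paper changes measure inside the Minkowski bound first and then applies a single H\"older — so the two arguments are interchangeable.
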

\begin{proof}
Using the expression of $Z_{i,\theta,\theta'}$, and the equation
\eqref{E:coupleUV} with the Markov property, it suffices to bound
the quantity
\begin{equation}\label{E:Zmoins_un_but}
E_{x_0,\theta}^n
\left[ \abs{
\frac{\pUV_{x_0}^n \left( U^n,V^n,\theta' \right)-\pUV_{x_0}^n \left(U^n,V^n,\theta \right)}
{\pUV_{x_0}^n \left(U^n,V^n,\theta \right)}
}^{\beta}
\right]^{\frac{1}{\beta}}.
\end{equation}
By Theorem \ref{T:likehood_block} with $k=1$ the function $\theta
\to \pUV_{x_0}(U^n,V^n,\theta)$ is absolutely continuous and we can
write the quantity above as: $E_{x_0,\theta}^n \left[ \abs{ \frac{
\int_{\theta}^{\theta'}
 \dot{\pUV}_{x_0}^n \left( U^n,V^n,s \right) \dd s}
{\pUV_{x_0}^n \left(U^n,V^n,\theta \right)}
}^{\beta}
\right]^{\frac{1}{\beta}}$.
Using first the Minkowski inequality, a change of measure and
then the H{\"o}lder inequality one finds the following bounds for this quantity:
\begin{multline*}
\int_{\theta}^{\theta'}
E_{x_0,s}^n
\left[
\abs{
 \frac{ \dot{\pUV}^n_{x_0} \left( U^n,V^n,s \right) }
{\pUV_{x_0}^n \left(U^n,V^n,\theta \right)}}^{\beta}
\frac{  \pUV_{x_0}^n \left( U^n,V^n,\theta \right) }
{\pUV_{x_0}^n \left(U^n,V^n,s\right)}
\right]^{\frac{1}{\beta}} \dd s
\\
\le
 \int_{\theta}^{\theta'}
E_{x_0,s}
\left[
\abs{
\frac{
 \dot{\pUV}_{x_0}^n \left( U^n,V^n,s\right)}
{\pUV_{x_0}^n \left(U^n,V^n,s \right)}}^{\beta \alpha'}
\right]^{\frac{1}{\beta\alpha'}}
E_{x_0,s}
\left[
\abs{
\frac{
 \pUV_{x_0}^n \left( U^n,V^n,s \right)}
{\pUV_{x_0}^n \left(U^n,V^n,\theta \right)}}^{(\beta-1) \beta'}
\right]^{\frac{1}{\beta\beta'}}
\dd s
\end{multline*}
for two conjugate exponents $\alpha'$ and $\beta'$. But the first
expectation in the right hand side above is bounded by Corollary
\ref{C:majdotpp} (with $k=1$) for all choices of $\alpha'$, $\beta$.
The second expectation can be bounded if $(\beta-1)\beta'$ is close
enough to zero by using \eqref{E:maj_min_densite} as in the proof of
Lemma \ref{L:stabilite_taille}. This gives that
\eqref{E:Zmoins_un_but} is smaller than $c \abs{\theta-\theta'}$.
\end{proof}
\subsection{A technical lemma}
\begin{lem}\label{L:diff_chi_deux}
Let $(G_0,\dots,G_k)$ be a centered Gaussian vector with invertible covariance
matrix $C_{k+1}$ and let us denote by $C_{k-1}$ the covariance matrix of
$(G_1,\dots,G_{k-1})$. Then,
\begin{equation}\label{E:chi_deux_deux}
\sum_{0\le j,j' \le k}G_j[C_{k+1}]^{-1}_{j,j'} G_{j'} -
\sum_{1\le j,j' \le k-1}G_j[C_{k-1}]^{-1}_{j,j'} G_{j'},
\end{equation}
is a $\chi^2(2)$ random variable.
\end{lem}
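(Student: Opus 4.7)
The plan is to reduce the identity to the standard fact that if $Y \sim \mathcal{N}(0,\Sigma)$ is a $d$-dimensional centered Gaussian with invertible covariance, then $Y^T \Sigma^{-1} Y \sim \chi^2(d)$. Applied here with $d = 2$, we need to show that the difference in \eqref{E:chi_deux_deux} can be rewritten as $\epsilon^T S^{-1} \epsilon$ for some $2$-dimensional Gaussian $\epsilon$ with covariance $S$.

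I would first permute the indices so that the "middle" variables come first: set $M = (G_1, \dots, G_{k-1})^T$ and $E = (G_0, G_k)^T$. Up to the permutation (which leaves both quadratic forms invariant), the full covariance matrix $C_{k+1}$ takes the block form
\begin{equation*}
\tilde{C} = \begin{pmatrix} C_{k-1} & B \\ B^T & D \end{pmatrix},
\end{equation*}
where $D$ is the $2 \times 2$ covariance matrix of $E$ and $B$ is the cross-covariance between $M$ and $E$. The invertibility of $C_{k+1}$ forces the invertibility of $C_{k-1}$ as well as of the Schur complement $S := D - B^T C_{k-1}^{-1} B$.

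Next I would apply the block matrix inversion formula to $\tilde{C}$ and expand the quadratic form $(M^T,E^T)\tilde{C}^{-1}(M^T,E^T)^T$. Grouping terms, a routine calculation gives
\begin{equation*}
\sum_{0\le j,j' \le k} G_j [C_{k+1}]^{-1}_{j,j'} G_{j'}
= M^T C_{k-1}^{-1} M + (E - B^T C_{k-1}^{-1} M)^T S^{-1} (E - B^T C_{k-1}^{-1} M).
\end{equation*}
The first term on the right is exactly $\sum_{1\le j,j' \le k-1}G_j[C_{k-1}]^{-1}_{j,j'} G_{j'}$, so the difference \eqref{E:chi_deux_deux} equals $\epsilon^T S^{-1} \epsilon$ with $\epsilon := E - B^T C_{k-1}^{-1} M$.

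Finally I would identify $\epsilon$: since $B^T C_{k-1}^{-1} M$ is precisely the $L^2$-projection of the Gaussian vector $E$ onto the linear span of $M$, i.e.\ the conditional expectation $\mathbb{E}[E \mid M]$, the residual $\epsilon$ is a centered Gaussian vector independent of $M$ with covariance matrix equal to the conditional covariance, which is the Schur complement $S$. Since $S$ is invertible, $\epsilon^T S^{-1} \epsilon$ is the standard quadratic form of a $2$-dimensional nondegenerate centered Gaussian against its inverse covariance, and is therefore $\chi^2(2)$-distributed. There is no real obstacle; the only point requiring mild care is bookkeeping in the Schur complement expansion.
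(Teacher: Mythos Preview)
Your proof is correct. Both your argument and the paper's rest on the same underlying idea---that the difference \eqref{E:chi_deux_deux} is the squared norm of the residual of $(G_0,G_k)$ after $L^2$-projection onto $\operatorname{span}(G_1,\dots,G_{k-1})$---but the executions differ. The paper runs a Gram--Schmidt orthonormalization on $G_1,\dots,G_{k-1},G_k,G_0$ (in that order) to obtain i.i.d.\ standard Gaussians $\mathcal{H}_0,\dots,\mathcal{H}_k$, so that the full quadratic form is $\sum_{j=0}^k \mathcal{H}_j^2$ while the sub-form is $\sum_{j=1}^{k-1}\mathcal{H}_j^2$, and the difference is $\mathcal{H}_0^2+\mathcal{H}_k^2\sim\chi^2(2)$. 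You instead handle the two ``extra'' coordinates jointly via the Schur complement, which gives the block identity and the conditional-covariance interpretation in one stroke. Your route is slightly more algebraic and avoids the sequential construction; the paper's is a touch more elementary but requires keeping track of the orthonormalization order. Either way the content is the same.
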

\begin{proof} Write the Gram-Schmidt orthonormalization procedure for
the $\mathbf{L^2}$ vectors $G_1,\dots,G_k,G_0$ as:
$$
\begin{bmatrix}
\mathcal{H}_0\\ \vdots \\ \mathcal{H}_k
\end{bmatrix}
=
P_{k}
\begin{bmatrix}
G_0\\ \vdots \\ G_k
\end{bmatrix},
$$
where the variables $\mathcal{H}_0, \dots , \mathcal{H}_k $ are i.i.d. with
standard Gaussian law and $P_k$ is some triangular matrix.
Then a few linear algebra
shows that \eqref{E:chi_deux_deux} is equal to
$\sum_{j=0}^k \mathcal{H}^2_j -\sum_{j=1}^{k-1} \mathcal{H}^2_j=
\mathcal{H}^2_0+\mathcal{H}^2_{k}$
and thus is chi-square distributed.
\end{proof}

\end{document}